\newtheorem{theorem}{Theorem}[section]
\newtheorem{corollary}[theorem]{Corollary}
\newtheorem{lemma}[theorem]{Lemma}
\newtheorem{prop}[theorem]{Proposition}
\theoremstyle{definition}
\newtheorem{definition}[theorem]{Definition}
\newtheorem{example}[theorem]{Example}
\newtheorem{remark}[theorem]{Remark}
\newtheorem{problem}[theorem]{Problem}
\newenvironment{romenum}
{ 

\begin{enumerate}}{\end{enumerate}}
\newcommand{\N}{\mathbb{N}}
\newcommand{\Z}{\mathbb{Z}}
\newcommand{\Q}{\mathbb{Q}}
\newcommand{\R}{\mathbb{R}}
\newcommand{\C}{\mathbb{C}}
\newcommand{\F}{\mathbb{F}}
\newcommand{\T}{\mathbb{T}}
\newcommand{\CP}{\mathbb{CP}}
\renewcommand{\P}{\mathbb{P}}
\newcommand{\LL}{\mathcal{L}}
\newcommand{\sV}{\mathsf{V}}
\newcommand{\sE}{\mathsf{E}}
\renewcommand{\k}{\Bbbk}
\newcommand{\RR}{\mathcal{R}}
\newcommand{\VV}{\mathcal{V}}
\newcommand{\V}{\mathcal{V}}
\newcommand{\A}{{\mathcal{A}}}
\newcommand{\B}{{\mathcal{B}}}
\newcommand{\cT}{\mathsf{T}}
\newcommand{\XX}{{\mathcal X}}
\newcommand{\cM}{{\mathcal{M}}}
\DeclareMathOperator{\rank}{rank}
\DeclareMathOperator{\gr}{gr}
\DeclareMathOperator{\im}{im}
\DeclareMathOperator{\id}{id}
\DeclareMathOperator{\ab}{{ab}}
\DeclareMathOperator{\ch}{char}
\DeclareMathOperator{\Hom}{{Hom}}
\DeclareMathOperator{\Poin}{{Poin}}
\DeclareMathOperator{\spn}{span}
\DeclareMathOperator{\const}{const}
\DeclareMathOperator{\depth}{depth}
\DeclareMathOperator{\TC}{TC}
\DeclareMathOperator{\orb}{orb}
\newcommand{\PL}{\scriptscriptstyle{\rm PL}}
\newcommand{\ii}{\mathrm{i}}
\newcommand{\bo}{{\mathbf 1}}
\newcommand{\Uc}{{\overline{U}}}
\newcommand{\bdU}{{\partial{\Uc}}}
\newcommand{\Fc}{{\overline{F}}}
\newcommand{\bdF}{{\partial{\Fc}}}
\newcommand{\G}{\Gamma}
\newcommand{\qam}{{Q_m(\A)}}
\newcommand{\fma}{{F_m(\A)}}
\newcommand{\surj}{\twoheadrightarrow}
\newcommand{\inj}{\hookrightarrow}
\newcommand{\abs}[1]{\left| #1 \right|}
\def\angl#1{{\langle #1\rangle}}
\def\set#1{{\{ #1\}}}
\newcommand{\norm}[1]{|\!|#1|\!|}
\newcommand{\cv}{\check{{\mathcal{V}}}_1}
\newcommand{\nuc}{{\stackrel{\circ}{\nu}}}
\newcommand{\co}{\!:\!}
\definecolor{dkgreen}{RGB}{0,100,0}
\definecolor{dkbrown}{RGB}{139,69,19}
\begin{document}

\title[Hyperplane arrangements and Milnor fibrations]{%
Hyperplane arrangements and Milnor fibrations} 

\alttitle{Arrangements d'hyperplans et fibr\'{e}s de Milnor}

\author[Alexander~I.~Suciu]{Alexander~I.~Suciu}
\address{Department of Mathematics,
Northeastern University,
Boston, MA 02115, USA}
\email{a.suciu@neu.edu}
\urladdr{http://www.northeastern.edu/suciu/}
\thanks{Partially supported by NSF grant DMS--1010298}

\subjclass{Primary
32S55,  
57M10  
Secondary
05B35, 
14F35, 
32S22,  
55N25. 
}

\keywords{Hyperplane arrangement, Milnor fibration, 
boundary manifold, graph manifold, characteristic variety, 
resonance variety, multinet, Alexander polynomial, formality}

\begin{abstract}
There are several topological spaces associated to a 
complex hyperplane arrangement:  the complement 
and its boundary manifold, as well as the Milnor fiber 
and its own boundary.  All these spaces are related in 
various ways, primarily by a set of interlocking fibrations. 
We use cohomology with coefficients in rank $1$ local 
systems on the complement of the arrangement to gain 
information on the homology of the other three spaces, 
and on the monodromy operators of the various fibrations.  
\end{abstract}

\begin{altabstract}
\'{E}tant donn\'{e} un arrangement d'hyperplans, il y a 
plusieurs espaces topologiques qu'on puisse lui associer: 
le compl\'{e}mentaire et sa vari\'{e}t\'{e} de bord, ainsi que 
la fibre de Milnor et son bord.  Tous ces espaces sont 
reli\'{e}s, en premier lieu par des fibrations.  On utilise 
la cohomologie avec coefficients dans les syst\`{e}mes 
locaux de rang $1$ sur le compl\'{e}mentaire d'un arrangement 
d'hyperplans pour \'{e}tudier l'homologie des trois autres 
espaces, et les op\'{e}rateurs de monodromie des fibrations 
associ\'{e}es. 
\end{altabstract}
\maketitle

\tableofcontents

\section{Introduction}
\label{sect:intro}

\subsection{Arrangements of hyperplanes} 
\label{subsec:intro1}

This paper is mostly an expository survey, centered on the 
topology of complements of hyperplane arrangements, their Milnor 
fibrations, and their boundary structures.  The presentation is loosely 
based on a set of notes for a mini-course given at the conference 
``Arrangements in Pyr\'{e}n\'{e}es", held in Pau, France in 
June 2012.  Although we expanded the 
scope of those notes, and provided many more details 
and explanations, we made every effort to maintain the 
original spirit of the lectures, which was to give a 
brisk, self-contained introduction to the subject, 
and provide motivation for further study.

An arrangement is a finite collection of hyperplanes in 
a finite-dimensional, complex vector space. There are 
various ways to understand the topology of such an object.  
In this paper, we describe several topological 
spaces associated to a hyperplane arrangement, 
all connected to each other by means of inclusions, bundle 
maps, or covering projections. Associated to these spaces, 
there is a plethora of topological invariants 
of an algebraic nature:  fundamental group and lower 
central series, Betti numbers and torsion coefficients, cohomology 
ring and Massey products, characteristic and resonance 
varieties, and so on.  One of the main goals of the subject 
is to decide whether a given invariant is combinatorially 
determined, and, if so, to express it explicitly in terms of 
the intersection lattice of the arrangement.   

As the title indicates, the focus of the paper is on the 
Milnor fibration of the complement of a hyperplane arrangement.  
We use cohomology with coefficients in rank $1$ local 
systems on the complement to compute the homology 
groups of the Milnor fiber, with coefficients in a field 
not dividing the number of hyperplanes, and determine the 
characteristic polynomial of the  monodromy operator 
acting on these homology groups. 
In the process, we also show how to compute various homological 
invariants of the boundary of the complement and the 
boundary of  the Milnor fiber.

\subsection{The complement} 
\label{subsec:intro2}

Given an arrangement $\A$ in $\C^{d+1}$, the most 
direct approach is to study the complement, 
$M=\C^{d+1} \setminus \bigcup_{H\in\A} H$. 
The fundamental group of this space can be 
computed algorithmically, using the braid monodromy 
associated to a generic plane section.  The cohomology 
ring of the complement was computed by Brieskorn in \cite{Br}.  
His result shows that $M$ is a formal space; 
in particular, all rational Massey products vanish. 
In \cite{OS}, Orlik and Solomon gave a simple 
combinatorial description of the algebra 
$H^*(M,\Z)$:  it is the quotient of the exterior 
algebra on classes dual to the meridians, modulo 
a certain ideal determined by the intersection lattice, $L(\A)$.
A comprehensive treatment of this topic can be 
found in \cite{OT}.

Starting in the mid-to-late 1990s, the subject underwent a considerable 
shift of emphasis towards the study of the cohomology jump 
loci of the complement.  These loci come in two basic flavors: 
the characteristic varieties, which are the jump loci for cohomology 
with coefficients in complex, rank one local systems, and the 
resonance varieties, which are the jump loci for the homology 
of the cochain complexes arising from multiplication classes in $H^1(M,\C)$.  

Since the complement of the arrangement 
is a smooth, quasi-projective variety, its characteristic 
varieties are finite unions of torsion-translates of algebraic subtori 
of the character group $\Hom(\pi_1(M),\C^*)$.  
Since the complement is a formal space, 
the resonance varieties of $M$ coincide 
with the tangent cone at the origin to the corresponding 
characteristic varieties, and thus are finite unions 
of rationally defined linear subspaces of $H^1(M,\C)$.  

For the degree-$1$ resonance varieties, these subspaces 
were described combinatorially by Libgober and Yuzvinsky \cite{LY00}, 
and later by Falk and Yuzvinsky \cite{FY07}, solely 
in terms of multinets on sub-arrangements of $\A$.  In general, 
though, the degree-$1$ characteristic varieties of an arrangement 
may contain components which do not pass through the origin, 
and it is still an open problem whether such components are 
combinatorially determined.

The characteristic and resonance varieties of an arrangement complement  
may also be defined for (algebraically closed) fields of positive 
characteristic.  The nature of these varieties is less predictable, 
though.  For instance, as noted in \cite{MS00, S01, Fa07}, both 
the tangent cone formula and the linearity of the resonance 
components fail in this setting.  Furthermore, as shown 
in \cite{Ma07}, non-vanishing Massey triple products may appear 
in positive characteristic. 

\subsection{The Milnor fibration} 
\label{subsec:intro3}

A more refined topological invariant of an arrangement $\A$ is 
the Milnor fibration of its complement.  For each hyperplane 
$H\in \A$, choose a linear form $f_H$ whose kernel is $H$. 
The polynomial $Q=\prod_{H\in\A} f_H$, then, is homogeneous 
of degree $n=\abs{\A}$. As shown by Milnor \cite{Mi} 
in a more general context, the restriction $Q\colon  M\to \C^*$ 
is a smooth fibration. The typical fiber, $F=Q^{-1}(1)$, is called 
the Milnor fiber of the arrangement.  We give a topological description 
of this fibration, building on previous joint work with Cohen and 
Denham \cite{CS95, CDS03, DeS12}.

It turns out that $F$ is a regular, cyclic $n$-fold cover of the 
projectivized complement, $U=\P(M)$. The classifying homomorphism 
for this cover, $\delta\colon \pi_1(U)\to \Z_n$, takes each meridian 
generator to $1$.  Embedding $\Z_n$ into $\C^*$ by 
sending $1$ to a primitive $n$-th root of unity, we may 
view $\delta$ as a character on $\pi_1(U)$. The relative 
position of this character with respect to the characteristic 
varieties of $U$ determines the Betti numbers of the 
Milnor fiber $F$, as well as the characteristic polynomial 
of the algebraic monodromy.  

More generally, given multiplicities $m_H\ge 1$ for each hyperplane 
$H\in A$, we consider the Milnor fibration of the multi-arrangement 
$(\A,m)$, defined by the homogeneous polynomial 
$Q(\A,m)=\prod_{H\in \A} f_H^{m_H}$. The Milnor fiber 
of this polynomial is now a regular, $N$-fold cyclic cover 
of the complement, where $N$ is the sum of the multiplicities. 
Not too surprisingly, the Betti numbers and other topological invariants 
of the Milnor fiber $F(\A,m)$ vary with the choice of $m$. 

Although the complement of an arrangement and its Milnor 
fiber share some common features (for instance, they are 
both smooth, quasi-projective varieties), there are some 
striking differences between the two.  For one, the 
homology groups of the Milnor fiber need not be torsion-free, 
as recent examples from \cite{DeS12} show.  For another, the 
Milnor fiber may have non-vanishing Massey products, 
and thus be non-formal, as examples from \cite{Zu} show. 
Finally, as noted in \cite{DP11}, the cohomology jump loci 
of the Milnor fiber may differ from those of the complement. 
All these novel qualitative features are related to the nature 
of multinets supported by the arrangement in 
question, or one of its sub-arrangements. 

\subsection{Boundary structures} 
\label{subsec:intro4}

Both the projectivized complement  and the Milnor fiber 
of an arrangement are non-compact manifolds (without boundary).   
Replacing these spaces by their compact versions allows us 
to study the behavior of the complement and the Milnor fibration 
as we approach the boundary, thereby revealing otherwise  
hidden phenomena. 
 
Cutting off a regular neighborhood of the arrangement $\A$ 
yields a compact manifold with boundary, $\Uc$, onto which $U$  
deform-retracts.  The boundary manifold of the arrangement, then, 
is the smooth, compact, orientable $(2d-1)$-dimensional manifold $\bdU$. 
The homology groups are torsion-free, and the ring 
$H^*(\bdU,\Z)$ is functorially determined by $H^*(U,\Z)$, 
via a ``doubling" construction. 

Especially interesting is the case $d=2$, when the manifold $\bdU$ 
is a graph-manifold, in the sense of Waldhausen. 
Following the approach from \cite{CS06, CS08}, we describe 
the fundamental group, the cohomology ring, and the cohomology 
jump loci of $\bdU$ in terms of the underlying graph structure, 
which in turn can be read off the intersection lattice of $\A$.  
Yet significant differences 
with the complement exist.  For one, the boundary manifold  
is never formal, unless $\A$ is a pencil or a near-pencil. 
For another, the resonance varieties of $\bdU$ may have non-linear 
components.

Intersecting now the Milnor fiber with a ball in $\C^{d+1}$ centered at 
the origin yields a compact manifold with boundary, $\Fc$, onto which 
$F$ deform-retracts.  The boundary of the Milnor fiber, then, is the 
smooth, compact, orientable $(2d-1)$-dimensional manifold $\bdF$. 
We observe here that $\bdF$ is a regular, cyclic $n$-fold cover of $\bdU$, 
where $n=\abs{\A}$, and identify a classifying homomorphism for this cover.  

When $d=2$, the boundary of $\Fc$ is a $3$-dimensional 
graph-manifold. Using work of N\'{e}methi and Szil\'{a}rd \cite{NS}, 
we give a formula for the characteristic polynomial of the monodromy
operator acting on $H_1(\bdF,\C)$, purely in terms of the M\"{o}bius 
function of $L_{\le 2}(\A)$.  We also show that the manifold $\bdF$ 
is never formal, unless $\A$ is a pencil or a near-pencil, and point 
out that $H_1(\bdF,\Z)$ typically has non-trivial $n$-torsion. 

\subsection{Organization of the paper}
\label{subsec:intro5}
The paper is divided roughly into three parts, following the 
approach outlined so far in this introduction. 

The first part deals with the complement of a hyperplane arrangement $\A$. 
In \S\ref{sect:ma1}, we discuss the combinatorics of $\A$, as it relates to the 
cohomology ring and resonance varieties of the complement, with special 
emphasis on multinets. In \S\ref{sect:ma2}, we study the topology of $M(\A)$, 
as reflected in the fundamental group and characteristic varieties, 
with special emphasis on orbifold fibrations and translated 
subtori in those varieties. 

The second part covers the Milnor fibration defined by 
a multi-arrangement. In \S\ref{sect:mf arr} we set up 
in all detail the regular covers into which the 
corresponding Milnor fiber fits.  In \S\ref{sect:mf bis},  
we study the homology of the Milnor fiber and the 
monodromy action, and discuss the formality 
properties and the cohomology jump loci of the Milnor fiber.

The third part deals with the boundary structures 
associated to an arrangement. In \S\ref{sect:boundary} 
we discuss the boundary manifold of an arrangement, 
while in \S\ref{sect:bdry mf} we discuss the boundary 
of the Milnor fiber. 

Sprinkled throughout the text there are about two dozen 
open problems. Most of these problems have been raised 
before; some are well-known, but some appear here for the 
first time.

We collect the necessary background material in three appendices 
at the end. Appendix \ref{sect:cvs} serves as a quick introduction to the 
cohomology jump loci of a space, and some of their properties.   
Appendix \ref{sect:covers} deals with the homology groups and 
jump loci of finite, regular abelian covers of a space.  
Finally, Appendix \ref{sect:formal} discusses formality 
properties of spaces, especially as they pertain to finite 
covers and jump loci.

\subsection{Acknowledgements}
\label{subsec:ack}
I wish to thank the organizers of the Arrangements in Pyr\'{e}n\'{e}es
conference for giving me the opportunity to lecture in such a  
wonderful setting, and for their support and hospitality. 
I also wish to thank the University of Sydney for its support
and hospitality while most of this paper was written up.

\section{The complement of an arrangement. I.}
\label{sect:ma1}

In this section we describe the cohomology ring and the 
resonance varieties of the  
complement of a complex hyperplane arrangement.

\subsection{Hyperplane arrangements}
\label{subsec:hyp arr}

An {\em arrangement of hyperplanes}\/ is a finite set $\A$ of 
codimension-$1$ linear subspaces in a finite-dimensional, 
complex vector space $\C^{d+1}$.  
Throughout, we will assume that the arrangement is central, 
that is, all the hyperplanes pass through the origin.  
The combinatorics of the arrangement is encoded in its 
{\em intersection lattice}, $L(\A)$; this is the poset of all 
intersections of $\A$, ordered by reverse inclusion, and 
ranked by codimension.   The arrangement is said to 
be essential if the intersection of all flats in $L(\A)$ 
is $\{0\}$. 

For each hyperplane $H\in \A$, let $f_H\colon \C^{d+1} \to \C$ 
be a linear form with kernel $H$. The product 
\begin{equation}
\label{eq:qa}
Q(\A)=\prod_{H\in \A} f_H,
\end{equation}
then, is a defining polynomial for the arrangement, 
unique up to a (non-zero) constant factor. 
Notice that $Q(\A)$ is a homogeneous polynomial 
of degree equal to $\abs{\A}$, the cardinality of the set $\A$.

On occasion, we will allow multiplicities on the hyperplanes. 
A {\em multi-arrangement}\/ is a pair $(\A,m)$, where $\A$ is a 
hyperplane arrangement, and $m\colon \A\to \Z$ is a function 
with $m_H\geq1$ for each $H\in\A$. We may also assign 
multiplicities to subspaces $X\in L(\A)$, by letting 
$m_X=\sum_{H\leq X}m_H$.
A defining polynomial for the multi-arrangement $(\A,m)$ 
is the homogeneous polynomial 
\begin{equation}
\label{eq:qam}
\qam=\prod_{H\in \A} f_H^{m_H}.
\end{equation}

\subsection{The complement}
\label{subsec:ma}

The main topological invariant associated to an arrangement $\A$ 
is its complement, $M(\A)=\C^{d+1}\setminus \bigcup_{H\in \A} H$.  
This is a smooth, quasi-projective variety, with the homotopy type 
of a connected, finite CW-complex of dimension $d+1$.   

\begin{example}
\label{ex:boolean}
The Boolean arrangement $\B_n$ consists of the coordinate 
hyperplanes $H_i=\set{z_i=0}$ in $\C^{n}$. The intersection 
lattice is the Boolean lattice of subsets of $\set{0,1}^{n}$, 
ordered by reverse inclusion, while the complement is the 
complex algebraic torus $(\C^*)^{n}$.
\end{example} 

\begin{example}
\label{ex:config}
The reflection arrangement of type ${\rm A}_{n-1}$, also known 
as the braid arrangement, consists of the diagonal hyperplanes 
$H_{ij}=\set{z_i-z_j=0}$ in  $\C^{n}$. The intersection 
lattice is the lattice of partitions of $[n]=\set{1,\dots,n}$, 
ordered by refinement.  The complement is the configuration 
space $F(\C,n)$ of $n$ ordered points in $\C$.  
In the early 1960s, Fox and Neuwirth showed that 
$\pi_1(F(\C,n))=P_{n}$, 
the pure braid group on $n$ strings, while Neuwirth and 
Fadell showed that $F(\C,n)$ is aspherical. 
\end{example} 

The group $\C^*$ acts freely on $\C^{d+1}\setminus \set{0}$ via 
$\zeta\cdot (z_0,\dots,z_{d})=(\zeta z_0,\dots, \zeta z_{d})$. 
The orbit space is the complex projective space of dimension $d$, 
while the orbit map, $\pi\colon \C^{d+1}\setminus \set{0} \to \CP^{d}$,  
$z \mapsto [z]$, 
is the Hopf fibration. The set  $\P(\A)=\set{\pi(H)\colon H\in \A}$ is an 
arrangement of codimension $1$ projective subspaces in $\CP^{d}$; 
its complement, $U(\A)$, coincides with the quotient 
$\P(M(\A))=M(\A)/\C^*$. 

The Hopf map restricts to a bundle map, 
$\pi(\A)\colon M(\A)\to U(\A)$, with fiber $\C^{*}$. 
Fixing a hyperplane $H\in \A$, we see that $\pi(\A)$ is 
also the restriction of the bundle map 
$\pi\colon \C^{d+1}\setminus H\to \CP^{d} 
\setminus \pi(H) \cong \C^{d}$.  Clearly, this latter bundle 
is trivial; hence, we have a diffeomorphism  
\begin{equation}
\label{eq:mu}
M(\A) \cong U(\A)\times \C^*.
\end{equation}

\begin{example}
\label{ex:pencil}
Let $\mathcal{P}_n$ be the arrangement of $n+1$ lines in 
$\C^2$ defined by the polynomial $Q=x^{n+1}-y^{n+1}$.  
Then $\P(\mathcal{P}_n)$ consists of $n+1$ 
points in $\CP^1$. Thus, $U(\mathcal{P}_n)\cong 
\C\setminus \{\text{$n$ points}\}$, and so $M(\mathcal{P}_n)$ is 
homotopy equivalent  to $S^1 \times \bigvee^{n} S^1$. 
\end{example}

\subsection{Cohomology ring}
\label{subsec:os alg}

The (integral) cohomology ring of a hyperplane arrangement 
complement was computed by Brieskorn in \cite{Br}, 
building on pioneering work of Arnol'd on the cohomology 
ring of the pure braid group.   In \cite{OS}, Orlik and 
Solomon gave a simple description of this ring, solely 
in terms of the intersection lattice of the arrangement.  

Let $\A$ be an arrangement, with complement $M=M(\A)$. 
Fix a linear order on $\A$, and let $E$ be the exterior 
algebra over $\Z$ with generators $\set{e_H \mid H\in \A}$ 
in degree $1$.  Next, define a differential $\partial \colon E\to E$ 
of degree $-1$, starting from $\partial(1)=0$ 
and $\partial(e_H)=1$, and extending $\partial$ to 
a linear operator on $E$, using the graded Leibniz rule 
$\partial (ab)=\partial(a) b + (-1)^{\deg a} a \partial (b)$ 
for homogeneous elements $a$ and $b$. 
Finally, let $I$ be the ideal of $E$ generated by $\partial e_{\B}$, 
where $\B$ runs through all sub-arrangements of $\A$ which 
are not in general position, 
and $e_\B=\prod_{H\in \B} e_H$.  Then $H^*(M,\Z)$ is 
isomorphic, as a graded $\Z$-algebra, to the quotient ring $A=E/I$.  

Under this isomorphism, the basis $\{e_H\}$ of
$A^1$ is dual to the basis of $H_1(M,\Z)$
given by the meridians $\{x_H\}$ around the hyperplanes, 
oriented compatibly with the complex orientations on $\C^{d+1}$ 
and the hyperplanes.   Furthermore, all the homology groups of $M$ 
are torsion-free; the generating function for their ranks 
is given by
\begin{equation}
\label{eq:poin}
\Poin(M(\A),t)=\sum_{X\in L(\A)} \mu(X) (-t)^{\rank(X)}, 
\end{equation}
where $\mu\colon L(\A) \to \Z$ is the M\"{o}bius function 
of the intersection lattice, given inductively by $\mu(\C^{d+1})=1$ 
and $\mu(X)=-\sum_{Y\supsetneq X} \mu(Y)$.  For details on all this, 
we refer to \cite{OT}.

Recall that a finite CW-complex is formal if its rational 
cohomology ring is quasi-isomorphic to its Sullivan's 
algebra of polynomial differential forms (more details 
can be found in Appendix \ref{sect:formal}).
It follows from Brieskorn's work that the complement 
of a hyperplane arrangement $\A$ is formal, in a very 
strong sense.  Indeed, for each hyperplane $H\in \A$, the $1$-form 
$\omega_H= \frac{1}{2\pi \ii} d \log f_H$ on $\C^{d+1}$ restricts 
to a $1$-form on $M(\A)$.  Let $\mathcal{D}$ be the subalgebra 
of the de~Rham algebra $\Omega^*_{\rm dR}(M(\A))$ generated over 
$\R$ by these $1$-forms.  Then, as shown in \cite{Br}, 
the correspondence $\omega_H \mapsto [\omega_H]$ 
induces an isomorphism $\mathcal{D} \to H^*(M(\A),\R)$, 
and this readily implies the formality of $M(\A)$.   

Similar considerations apply to the homology and cohomology 
of the projectivized complement, $U(\A)$.  In particular, if we let 
$n=\abs{\A}$ be the number of hyperplanes, then 
$H_1(M(\A),\Z)=\Z^n$, with canonical basis the set 
$\set{x_H : H\in \A}$, and 
\begin{equation}
\label{eq:h1u}
H_1(U(\A),\Z)=\Z^n \slash \big(\sum_{H\in\A} x_H\big)\cong \Z^{n-1}.
\end{equation}
We will denote by $\overline{x}_H$ the image of 
$x_H$ in $H_1(U(\A),\Z)$.

\subsection{Resonance varieties}
\label{subsec:res var}

Let $\k$ be an algebraically closed field.   The above-mentioned 
isomorphisms allow us to identify  $H^1(M(\A),\k)$ with the affine 
space $\k^n$, and $H^1(U(\A),\k)$ with the affine space
\begin{equation}
\label{eq:affine}
\mathbb{A}_\k(\A)=\{ x \in \k^n \mid x_1 + \cdots + x_n=0\}\cong \k^{n-1}.
\end{equation}

Let $\RR^q_s(M(\A),\k)$ be the resonance varieties of the 
arrangement complement.  From the general theory reviewed 
in \S\ref{subsec:rv}, we know that each of these sets  
is a homogeneous subvariety of $\k^n$, and that 
$\RR^q_s(M(\A),\k)\subseteq \RR^q_1(M(\A),\k)$, for all $s\ge 1$.  
The diffeomorphism \eqref{eq:mu}, together with the 
product formula \eqref{eq:resprod} yields an identification 
\begin{equation}
\label{eq:rqma}
\RR^q_1(M(\A),\k)\cong \RR^q_1(U(\A),\k)\cup 
\RR^{q-1}_1(U(\A),\k).
\end{equation}  
Thus, we may view the resonance varieties of $\A$ 
as lying in the affine space $\mathbb{A}_\k(\A)$.  

If $\B\subset \A$ is a proper sub-arrangement, the inclusion 
$M(\A) \inj M(\B)$ induces an epimorphism on fundamental 
groups.  By Proposition \ref{prop:epi cv}, the induced 
homomorphism in cohomology restricts to an embedding 
$\RR^1_s(\B,\k) \inj \RR^1_s(\A,\k)$.  The irreducible 
components of $\RR^1_s(\A,\k)$ that  lie in the image 
of such an embedding are called {\em non-essential}; 
the remaining components are called {\em essential}. 

The description of the Orlik--Solomon algebra given in 
\S\ref{subsec:os alg} makes it clear that the resonance 
varieties $\RR^q_s(M(\A),\k)$ depend only on the 
intersection lattice, $L(\A)$, and on the characteristic 
of the field $\k$.  A basic problem in the subject is to 
find concrete formulas making this dependence explicit. 
We will briefly discuss the positive characteristic case 
in \S\ref{subsec:res p}, but for now we will concentrate 
on the case when $\ch(\k)=0$ and $q=1$.

The complex resonance varieties $\RR^q_1(M(\A),\C)$ 
were first defined and studied by Falk in \cite{Fa97}.  Soon 
after, Cohen--Suciu \cite{CS99}, Libgober \cite{Li01}, and 
Libgober--Yuzvinsky \cite{LY00} showed that the varieties 
$\RR_s(\A)=\RR^1_s(M(\A),\C)$ consist of linear subspaces 
of the vector space $\mathbb{A}(\A)=\mathbb{A}_{\C}(\A)$, 
and analyzed the nature of these subspaces. Let 
us summarize those results.   

\begin{theorem}
\label{thm:res arr}
For a hyperplane arrangement $\A$, the following hold. 
\begin{enumerate}
\item \label{r1}
 Each irreducible component of $\RR_1(\A)$ 
is either $\{0\}$, or a linear subspace of $\mathbb{A}(\A)$ 
of dimension at least $2$. 
\item \label{r2} 
Two distinct components of $\RR_1(\A)$ meet 
only at $0$.
\item \label{r3}
$\RR_s(\A)$ is either $\{0\}$, or the union of all components 
of $\RR_1(\A)$ of dimension greater than $s$.
\end{enumerate}
\end{theorem}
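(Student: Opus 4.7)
The plan is to work with the Aomoto complex of the Orlik--Solomon algebra $A=H^*(M(\A),\C)$ and to identify $\RR^1_s(M(\A),\C)$ with the locus $\{a\in A^1 : \dim H^1(A^\bullet,\mu_a)\ge s\}$, where $\mu_a$ denotes left multiplication by $a$. After factoring out the always-resonant class $\sum_{H\in\A} e_H$, this locus lies inside $\mathbb{A}(\A)$. For any non-zero $a\in\RR_1(\A)$, the non-vanishing of the first Aomoto cohomology produces an element $b\in A^1\setminus \C a$ with $ab=0$ in $A^2$. A direct check shows that for every non-zero $c=\lambda a+\mu b$ we have $a,b\in\ker(\mu_c)$ while $\im(\mu_c\colon A^0\to A^1)=\C c$, so $c\in\RR_1(\A)$; hence the $2$-plane $\spn(a,b)$ lies entirely inside $\RR_1(\A)$. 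This already forces every positive-dimensional component of $\RR_1(\A)$ to have dimension at least~$2$, giving the dimension part of~(i).

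To upgrade this to the full linearity of components claimed in~(i), I would invoke the tangent cone theorem for smooth, quasi-projective, $1$-formal spaces. Indeed, $M(\A)$ is $1$-formal by Brieskorn's theorem as recalled in \S\ref{subsec:os alg}, so $\RR_1(\A)$ coincides with the tangent cone at the origin of the characteristic variety $\V^1_1(M(\A))$; by the Arapura-type structure theorem, this characteristic variety is a finite union of torsion-translated algebraic subtori of $\Hom(\pi_1(M),\C^*)$, and the tangent cones at~$1$ of those translates passing through~$1$ are the rationally defined linear subspaces asserted. For~(ii), suppose two distinct components $L_1\ne L_2$ shared a non-zero point. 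Each $L_i$ is isotropic (its pairwise products vanish in $A^2$), so if one can further show $L_1\cdot L_2=0$, the sum $L_1+L_2$ would be an isotropic subspace still contained in $\RR_1(\A)$, strictly larger than either $L_i$, contradicting their maximality. The vanishing $L_1\cdot L_2=0$ is the delicate step; it follows from the Libgober--Yuzvinsky/Falk--Yuzvinsky description of components via multinets on sub-arrangements of $\A$, since distinct multinets with a common non-zero resonant point produce incompatible block structures.

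For~(iii), the depth stratification reduces to the local computation that at a generic point $a\in L$ of a component of dimension~$d$, we have $\dim H^1(A^\bullet,\mu_a)=d-1$. The lower bound is automatic from the $d$-dimensional isotropic subspace $L\subseteq\ker(\mu_a)$ modulo the line $\C a$; the matching upper bound, that for sufficiently generic~$a\in L$ no cocycles outside the span of~$L$ survive, once again rests on the multinet structure of the component. Granting this, $L\subseteq \RR_s(\A)\iff d-1\ge s \iff d>s$, which is exactly~(iii). The chief obstacle throughout is the linearity and generic disjointness of components in parts (i) and~(ii): both rest on deep structural input, either formality plus the tangent cone theorem, or the multinet classification. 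Once either is invoked, the $2$-plane construction handles the dimension bound and the local Aomoto calculation handles the depth filtration essentially formally.
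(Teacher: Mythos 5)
The paper does not actually prove Theorem~\ref{thm:res arr}; it states it as a summary of results due to Cohen--Suciu~\cite{CS99}, Libgober~\cite{Li01}, and Libgober--Yuzvinsky~\cite{LY00}, and then proceeds. So there is no internal proof to compare against, and the right question is whether your sketch would succeed as a self-contained argument.

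Your argument for the dimension bound in~\eqref{r1} is correct and entirely elementary: for nonzero $a\in\RR_1(\A)$ the Aomoto cohomology $H^1(A,\mu_a)\neq 0$ produces $b\notin\C a$ with $ab=0$, and then for any nonzero $c\in\spn(a,b)$ one has $\spn(a,b)\subseteq\ker(\mu_c\colon A^1\to A^2)$ while $\im(\mu_c\colon A^0\to A^1)=\C c$, so $c\in\RR_1(\A)$; a pigeonhole argument on the finitely many irreducible components then rules out a one-dimensional component. Your route to linearity of components via $1$-formality, Arapura's structure theorem (Theorem~\ref{thm:arapura}), and the tangent cone formula (Theorem~\ref{thm:tcone}) is also valid, and in fact matches the way the paper itself frames the matter in~\S\ref{subsec:intro2}; note, though, that this is a more modern argument than the original one in~\cite{LY00}, which predates the full tangent cone theorem.

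The genuine gap is in parts~\eqref{r2} and~\eqref{r3}. For~\eqref{r2}, you correctly reduce to showing $L_1\cdot L_2=0$ in $A^2$ and then use the (correct) observation that any isotropic subspace of dimension $\ge 2$ lies in $\RR_1(\A)$, but you do not actually establish the vanishing; the phrase ``incompatible block structures'' does not constitute an argument. More seriously, both here and in~\eqref{r3} you appeal to ``the multinet structure of the component,'' i.e.\ to Theorem~\ref{thm:res fy}; but the Falk--Yuzvinsky description of components in terms of multinets is proved \emph{using} the Libgober--Yuzvinsky structure results you are trying to establish, so invoking it here is circular. The actual content of~\eqref{r2} and~\eqref{r3}, and of the upper bound $\dim H^1(A,\mu_a)\le \dim L-1$ at a generic point $a\in L$, is the delicate combinatorial analysis of the Orlik--Solomon algebra carried out in~\cite{LY00} (via neighborly partitions / the isotropy of the subspaces $\ker(\mu_a)$ at generic resonant points), and a full proof would have to reproduce that analysis rather than defer to the multinet classification.
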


\subsection{Multinets}
\label{subsec:multinets}
An elegant method, due to Falk and Yuzvinsky \cite{FY07}, 
describes explicitly the linear subspaces comprising $\RR_1(\A)$.   
Before proceeding, we need to introduce the relevant 
combinatorial notion, following the treatment from \cite{FY07}.  

\begin{definition}
\label{def:multinet}
A {\em multinet}\/ $\mathcal{M}$ on an arrangement $\A$ consists 
of the following data:
\begin{romenum}
\item An integer $k\ge 3$, and a partition of $\A$ 
into $k$ subsets, say, $\A_1,\ldots,\A_k$.
\item An assignment of multiplicities on the hyperplanes, 
$m\colon \A\to \N$.
\item A subset $\XX\subseteq L_2(\A)$, called the base locus.
\end{romenum}
Moreover, the following conditions must be satisfied:
\begin{enumerate}
\item  \label{m1} 
There is an integer $\ell$ such that $\sum_{H\in\A_i} m_H=\ell$, 
for all $i\in [k]$.
\item  \label{m2} 
For any two hyperplanes $H$ and $H'$ in different classes, the flat 
$H\cap H'$ belongs to $\XX$.
\item  \label{m3} 
For each $X\in\XX$, the sum 
$n_X:=\sum_{H\in\A_i\colon H\supset X} m_H$ is independent of $i$.
\item  \label{m4} 
For each $i\in [k]$, the space 
$\big(\bigcup_{H\in \A_i} H\big) \setminus \XX$ is connected. 
\end{enumerate}
\end{definition}

We say that a multinet  as above has $k$ classes and weight $\ell$, 
and refer to it as a $(k,\ell)$-multinet.  Without essential loss of generality, 
we may assume that $\gcd\{m_H \mid H\in \A\}=1$.  If all the multiplicities 
are equal to $1$, the multinet is said to be {\em reduced}. 
If, furthermore, every flat in $\XX$  is contained in precisely one 
hyperplane from each class, the multinet is called 
a {\em $(k,\ell)$-net}. 

For any multinet on $\A$, the base locus $\XX$ 
is determined by the partition $(\A_1,\dots, \A_k)$.  Indeed, 
for each $i\ne j$, we have that 
\begin{equation}
\label{eq:base}
\XX = \{ H \cap H' \mid H\in \A_i,\, H'\in \A_j \}.
\end{equation}
The next lemma (also from \cite{FY07}) is an easy consequence 
of this observation, and Definition \ref{def:multinet}.  
For completeness, we include a proof. 

\begin{lemma}[\cite{FY07}] 
\label{lem:fysums}
For any $(k,\ell)$-multinet, the following identities hold:
\begin{enumerate}
\item $\sum_{H\in \A} m_H=k\ell$. 
\item $\sum_{X\in \XX:  H\supset X} n_X= \ell$, for any $H\in \A$. 
\item $\sum_{X\in \XX} n_X^2 = \ell^2$.
\end{enumerate}
\end{lemma}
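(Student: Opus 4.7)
The plan is to prove all three identities by double-counting, with (1) coming directly from the definition, (2) exploiting the codimension-$2$ constraint on $\XX$, and (3) combining (2) with (1).

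For part (1), I would simply sum the identity in condition (\ref{m1}) over $i=1,\dots,k$. Since $\A=\A_1\sqcup\cdots\sqcup\A_k$ is a partition, the left-hand sides combine into $\sum_{H\in\A}m_H$ while the right-hand sides sum to $k\ell$.

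For part (2), fix $H\in\A_i$ and choose any other class index $j\neq i$. By condition (\ref{m3}), I can write $n_X=\sum_{H'\in\A_j:\,H'\supset X}m_{H'}$ for each $X$, so after swapping the order of summation,
\begin{equation*}
\sum_{X\in\XX:\,H\supset X}n_X \;=\; \sum_{H'\in\A_j} m_{H'}\cdot \#\set{X\in\XX : X\subset H\cap H'}.
\end{equation*}
Here is the one point that needs care: since every $X\in\XX$ has codimension $2$, and $H\cap H'$ also has codimension $2$ (because $H\in\A_i$ and $H'\in\A_j$ with $i\neq j$, so $H\neq H'$), the containment $X\subset H\cap H'$ forces $X=H\cap H'$. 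Moreover, by condition (\ref{m2}), $H\cap H'$ does belong to $\XX$. Hence each $H'\in\A_j$ contributes exactly one term, and the expression collapses to $\sum_{H'\in\A_j}m_{H'}=\ell$ by (\ref{m1}).

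For part (3), I apply the same bookkeeping trick but now with the roles reversed: fix any class, say $\A_1$, and substitute $n_X=\sum_{H\in\A_1:\,H\supset X}m_H$ into one copy of $n_X$ in $n_X^2$. Swapping summation order and applying part (2) in the inner sum gives
\begin{equation*}
\sum_{X\in\XX}n_X^2 \;=\; \sum_{H\in\A_1}m_H\sum_{X\in\XX:\,H\supset X}n_X \;=\; \ell\sum_{H\in\A_1}m_H \;=\; \ell^2,
\end{equation*}
the last equality being condition (\ref{m1}) applied to $\A_1$. The only real content is the codimension argument in (2); once that is in place, (1) and (3) are essentially bookkeeping.
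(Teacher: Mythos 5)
Your proof is correct and takes essentially the same double-counting approach as the paper (eq.\ \eqref{eq:fysum1}--\eqref{eq:fysum3}), just with the codimension-$2$ reasoning behind the interchange of sums spelled out more explicitly, and with part (3) invoking part (2) directly rather than re-expanding from scratch. These are cosmetic differences; the underlying argument is the same.
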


\begin{proof}
Clearly, 
\begin{equation}
\label{eq:fysum1}
\sum_{H\in \A} m_H = \sum_{i=1}^{k} \sum_{H\in \A_i} m_H=k\ell.
\end{equation}

To verify the second identity, let $i\in [k]$ be such that $H\in \A_i$; then, 
for any $j\ne i$, we have 
\begin{equation}
\label{eq:fysum2}
\ell=\sum_{H'\in \A_j} m_{H'} = \sum _{X\in \XX:  H\supset X}  
\sum_{H'\in \A_j: H'\supset X} m_{H'} = \sum _{X\in \XX:  H\supset X}  n_X.
\end{equation}

Finally, for any $i\ne j$, 
\begin{align}
\label{eq:fysum3}
\ell^2&=\sum_{H\in \A_i} m_{H} \sum_{H'\in \A_j} m_{H'} = 
\sum_{H\in \A_i, H'\in \A_j} m_H m_{H'} \\
&=\sum_{H\in \A_i, H'\in \A_j}  \sum_{X\in \XX: H\supset X, H'\supset X} m_H m_{H'} 
= \sum_{H\in \A_i} \sum_{X\in \XX: H\supset X} m_H n_X =   \sum_{X\in \XX} n_X^2,\notag
\end{align}
and this completes the proof.
\end{proof}

\begin{example}
\label{ex:local component}
Let $X\in L_2(\A)$, and assume that the 
sub-arrangement $\A_{X}=\{H \in \A\mid H \supset X\}$ has 
size at least $3$. We may then form a net on $\A_{X}$ by 
assigning to each hyperplane the multiplicity $1$, putting 
one hyperplane in each class, and setting $\XX=\{X\}$.
\end{example}

\begin{figure}
\centering
\begin{tikzpicture}[scale=0.78]
\draw[style=thick,densely dashed,color=blue] (-0.5,3) -- (2.5,-3);
\draw[style=thick,densely dotted,color=red]  (0.5,3) -- (-2.5,-3);
\draw[style=thick,color=dkgreen] (-3,-2) -- (3,-2);
\draw[style=thick,densely dotted,color=red]  (3,-2.68) -- (-2,0.68);
\draw[style=thick,densely dashed,color=blue] (-3,-2.68) -- (2,0.68);
\draw[style=thick,color=dkgreen] (0,-3.1) -- (0,3.1);
\node at (-2,-2) {$\bullet$};
\node at (2,-2) {$\bullet$};
\node at (0,2) {$\bullet$};
\node at (0,-0.7) {$\bullet$};
\node at (-2.5,0.4) {$x+z$};
\node at (2.5,0.4) {$x-y$};
\node at (0,-3.5) {$y+z$};
\node at (-3.7,-2) {$y-z$};
\node at (-2.5,-3.3) {$x-z$};
\node at (2.5,-3.3) {$x+y$};
\end{tikzpicture}
\caption{A $(3,2)$-net on the ${\rm A}_3$ arrangement}
\label{fig:braid}
\end{figure}
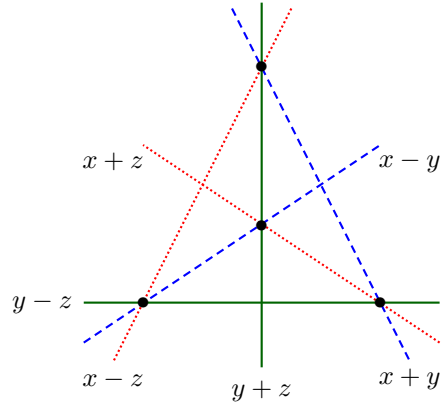

\begin{example}
\label{ex:braid arr}
Let $\A$ be a generic $3$-section of the reflection arrangement 
of type ${\rm A}_3$, defined by the polynomial 
$Q(\A)=(x+y)(x-y)(x+z)(x-z)(y+z)(y-z)$.   
Figure \ref{fig:braid} shows a plane section of $\A$, with the 
hyperplanes labeled by their defining linear forms. 
Ordering the hyperplanes as the factors of $Q(\A)$, 
the flats in $L_2(\A)$ may be 
labeled as $136$, $145$, $235$, and $246$.  
The $(3,2)$-net on $\A$ depicted in the picture 
corresponds to the partition $(12|34|56)$; 
the base locus $\XX$ consists of the triple 
points indicated by dark circles. 
\end{example}

\begin{example}
\label{ex:B3}
Let $\A$ be the reflection arrangement of type ${\rm B}_3$, 
defined by the polynomial $Q(\A)=xyz(x^2-y^2)(x^2-z^2)(y^2-z^2)$.   
Figure \ref{fig:b3 arr} shows a plane section of $\A$, together with a 
$(3,4)$-multinet on it.
\end{example}

\begin{figure}
\centering
\begin{tikzpicture}[scale=0.9]
\draw[style=thick,color=dkgreen] (0,0) circle (3.1);
\node at (-2.4,0.25) {$y^2$};
\node at (0,-2.5) {$x^2$};
\node at (3.35,0.5) {$z^2$};
\clip (0,0) circle (2.9);
\draw[style=thick,densely dashed,color=blue] (-1,-2.1) -- (-1,2.5);
\draw[style=thick,densely dotted,color=red] (0,-2.2) -- (0,2.5);
\draw[style=thick,densely dashed,color=blue] (1,-2.1) -- (1,2.5);
\draw[style=thick,densely dotted,color=red] (-2.5,-1) -- (2.5,-1);
\draw[style=thick,densely dashed,color=blue] (-2.5,0) -- (2.5,0);
\draw[style=thick,densely dotted,color=red] (-2.5,1) -- (2.5,1);
\draw[style=thick,color=dkgreen]  (-2,-2) -- (2,2);
\draw[style=thick,color=dkgreen](-2,2) -- (2,-2);
\node at (-2.1,-0.75) {$y-z$};
\node at (-2.1,1.2) {$y+z$};
\node at (-1,-2.35) {$x-z$};
\node at (1,-2.35) {$x+z$};
\node at (-2,-1.4) {$x-y$};
\node at (2,-1.4) {$x+y$};
\end{tikzpicture}
\caption{A $(3,4)$-multinet on the ${\rm B}_3$ arrangement}
\label{fig:b3 arr}
\end{figure}
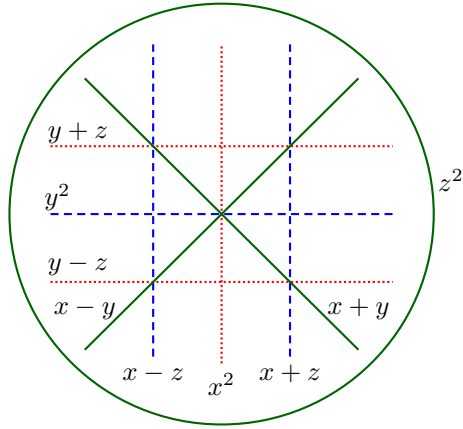
 
The next theorem, which combines results of Pereira--Yuzvinsky 
\cite{PY} and Yuzvinsky \cite{Yu09}, summarizes what is known 
about the existence of non-trivial multinets on arrangements. 

\begin{theorem} 
\label{thm:pyy}
Let $(\A_1,\dots,\A_k)$ be a multinet on $\A$, with multiplicity 
vector $m$ and base locus $\XX$. 
\begin{enumerate}
\item \label{multi1}
If $\abs{\XX}>1$, then $k=3$ or $4$.    
\item \label{multi2}
If there is a hyperplane $H\in \A$ such that $m_H>1$, 
then $k=3$. 
\end{enumerate}
\end{theorem}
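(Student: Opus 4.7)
The strategy is to associate to the multinet a pencil of hypersurfaces and then bound the number of completely reducible fibers such a pencil can admit. Setting $Q_i = \prod_{H\in\A_i} f_H^{m_H}$, axiom (m1) makes each $Q_i$ homogeneous of the common degree $\ell$. The heart of the argument is to show that $Q_1,\dots,Q_k$ all lie in a two-dimensional linear subspace of the degree-$\ell$ polynomials, i.e.\ for each $i\ge 3$ there are scalars with $Q_i=\alpha_i Q_1+\beta_i Q_2$. Axioms (m2) and (m3), together with the numerical identities of Lemma \ref{lem:fysums}, force the divisors cut out by the $Q_i$ to agree (with their multiplicities, weighted by the $n_X$) on the base locus $\XX$; the connectivity axiom (m4) is what rules out extraneous common factors from the $Q_i$ and guarantees that the linear pencil spanned by $Q_1$ and $Q_2$ really contains all of them. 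To keep the geometry as simple as possible, I would first cut $\A$ by a generic 2-plane through the origin, reducing to a line arrangement in $\CP^{2}$ so that each $Q_i$ becomes a (possibly nonreduced) plane curve of degree $\ell$.

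Having produced the pencil, resolving its indeterminacy locus $\XX$ by successive blowups at points of $\XX$ gives a morphism $f\colon X\to\CP^{1}$ from a smooth projective surface $X$, each class $\A_i$ contributing a completely reducible fiber. Part (1) then reduces to the classical bound, due to Stothers in the function-field setting and adapted by Pereira--Yuzvinsky to multinets, that a pencil of plane curves with $|\XX|\ge 2$ admits at most four completely reducible fibers. The cleanest route is through a Riemann--Hurwitz / $\chi_{\rm top}$ inequality applied to $f$: writing the Euler characteristic of the generic fiber in terms of the defects contributed by the $k$ completely reducible fibers, each measured via its component multiplicities and the intersection numbers with $\XX$, one obtains $k\le 4$ as soon as $|\XX|\ge 2$. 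The main obstacle in this step is the pencil claim itself; the identities of Lemma \ref{lem:fysums} and condition (m4) are both essential, and without (m4) the $Q_i$ would only be guaranteed to share a nontrivial common factor rather than to span a $2$-dimensional system.

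For part (2), assume for contradiction that some $m_H>1$ while $k=4$. On the resolution $f\colon X\to\CP^{1}$ the hyperplane $H$ contributes a component of multiplicity $\ge 2$ to one of the four completely reducible fibers, so that fiber is nonreduced. Plugging the identities $\sum_{X\in\XX} n_X^2=\ell^2$ and $\sum_{H\in\A} m_H=4\ell$ into the defect count from step two, one sees that the $k\le 4$ bound from part (1) is saturated only when every completely reducible fiber is reduced, i.e.\ only for an honest $(4,\ell)$-net. This forces $m_H=1$ for all $H$, contradicting the assumption $m_H>1$, and hence $k=3$. The delicate point here is the saturation analysis of the Euler-characteristic inequality for $k=4$, and this is exactly where the rigidity of $4$-nets established in \cite{Yu09} enters the argument.
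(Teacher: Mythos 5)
The paper itself contains no proof of this theorem: it is stated as a survey result that simply points to \cite{PY} for the bound $k\le 4$ and to \cite{Yu09} for the sharpening to $k=3$ in the non-reduced case, so there is no in-text argument to compare against. Your sketch does capture the correct global strategy — pass from the multinet to a pencil of degree-$\ell$ plane curves (after a generic plane section, which is legitimate since the multinet data lives in $L_{\le 2}(\A)$), then bound the number of completely reducible members of that pencil — and that is indeed the route the cited sources take.

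Where the sketch departs from the actual arguments, however, is exactly where the real work lies. For part (1), you propose a "Riemann--Hurwitz / $\chi_{\rm top}$ inequality" on the resolved map; that is not what Pereira--Yuzvinsky do. They associate to the pencil the corresponding singular holomorphic foliation of $\P^2$, recognize each completely reducible fiber as an invariant algebraic curve, and obtain $k\le 4$ from index-theoretic constraints on invariant divisors, not from Euler-characteristic book-keeping. A direct Euler-characteristic count is not obviously adequate here: one would need precise control over the infinitely near base points of the pencil and the component structure of every degenerate fiber, none of which you supply. For part (2), "plugging the identities of Lemma~\ref{lem:fysums} into the defect count" does not by itself force a $(4,\ell)$-multinet to be reduced; that rigidity \emph{is} the theorem of \cite{Yu09}, so invoking it as "the saturation analysis" is circular rather than a derivation. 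Finally, your account of axiom (m4) is off: the $Q_i$ are products over pairwise disjoint sets of linear forms, so there is no common factor to rule out. What (m4) actually does in the Falk--Yuzvinsky argument is force the function $Q_1/Q_2$ — which (m2), (m3) and Lemma~\ref{lem:fysums} show is constant on each irreducible component of $V(Q_i)\setminus\XX$ for $i\ge 3$ — to take a single value on all of $V(Q_i)\setminus\XX$, so that $Q_1-c_iQ_2$ is divisible by $Q_i$ and hence lies in the pencil.
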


Although infinite families of multinets with $k=3$ are known, 
only one multinet with $k=4$ is known to exist: the $(4,3)$-net 
on the Hessian arrangement, which we will discuss in 
Example \ref{ex:hesse}.  In fact, it is a conjecture of Yuzvinsky 
\cite{Yu12} that the only $(4,\ell)$-multinet is the Hessian 
$(4,3)$-net.

\subsection{Resonance and multinets}
\label{subsec:res net}

We now return to the (degree $1$) complex resonance 
varieties of a hyperplane arrangement $\A$. Recall that 
$\RR_1(\A)=\RR^1_1(M(\A),\C)$ is a subvariety of the 
affine space $\mathbb{A}(\A)=\{ x\in \C^{\abs{\A}} 
\mid \sum_{H\in \A} x_H=0\}$, consisting of linear 
subspaces meeting only at the origin.  Let us describe 
these subspaces in a concrete way.

Given a multinet $\cM$ on $\A$, with parts 
$(\A_1,\dots ,\A_k)$ and multiplicity vector $m$, 
set $u_i=\sum_{H\in \A_i} m_H e_H$ for each $1\le i\le k$, 
and  put 
\begin{equation}
\label{eq:pm}
P_\cM=\spn \set{u_2-u_1,\dots , u_k-u_1}.  
\end{equation}
By construction, $P_\cM$ is a linear subspace of $\mathbb{A}(\A)$.  
As shown in \cite[Theorem 2.4]{FY07}, this subspace 
lies inside $\RR_1(\A)$ and has dimension $k-1$. 

Now suppose there is a sub-arrangement $\B\subset \A$ 
which supports a multinet $\cM$ with $k$ parts.   By the above, 
the linear space $P_{\cM}$ lies inside $\RR_1(\B)$.
By the discussion from \S\ref{subsec:res var}, 
the inclusion $M(\A)\inj M(\B)$ induces an embedding 
$\RR_1(\B) \inj \RR_1(\A)$.  Thus, $P_{\cM}$ is a 
linear subspace of $\RR_1(\A)$, of dimension $k-1$.
Conversely, it is shown in \cite[Theorem 2.5]{FY07} 
that all (non-zero) irreducible components of $\RR_1(\A)$ 
arise in this fashion.  

Summarizing the above discussion, we have the following 
description of the first resonance variety of an arrangement.

\begin{theorem}[\cite{FY07}]
\label{thm:res fy}
The positive-dimensional, irreducible components of 
$\RR_1(\A)$ are in one-to-one correspondence 
with the multinets on sub-arrangements of $\A$, and so 
\[
\RR_1(\A) = \set{0}\cup \bigcup_{\B \subset \A} 
\bigcup_{\text{$\cM$ a multinet on $\B$}} P_{\cM}.
\]
\end{theorem}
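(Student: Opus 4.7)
The plan is to establish the stated bijection in two directions and then invoke the structural results of Theorem \ref{thm:res arr} to make the correspondence into a bijection onto the positive-dimensional components.

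\emph{From multinets to components.} Fix a multinet $\cM$ on a sub-arrangement $\B\subseteq \A$ with classes $(\A_1,\dots,\A_k)$, multiplicity vector $m$, and weight $\ell$. First I would check that each generator $u_i-u_j$ of $P_\cM$ lies in $\mathbb{A}(\B)$: every $u_i$ has coordinate sum $\sum_{H\in \A_i}m_H=\ell$ by axiom \ref{m1}, so differences have coordinate sum zero. Since the supports of $u_1,\dots,u_k$ are pairwise disjoint sets of hyperplanes, these vectors are linearly independent, yielding $\dim P_\cM=k-1\geq 2$. The crux is to show $P_\cM\subseteq \RR_1(\B)$, for which I would verify that $P_\cM$ is \emph{isotropic} in the Orlik--Solomon algebra $A(\B)$, i.e.\ $a\wedge b=0$ in $A^2(\B)$ for all $a,b\in P_\cM$. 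Using the local OS relation $\sum_{H\supseteq X}(e_H-e_{H'})e_{H'}=0$ around each flat $X\in\XX$ with $|\B_X|\ge 3$, a direct computation expresses $u_iu_j\in A^2(\B)$ (for $i\ne j$) as a sum of terms indexed by $X\in\XX$, with coefficient $n_X^2$ in view of axiom \ref{m3}; in particular, $u_iu_j$ is independent of the unordered pair $\{i,j\}$. Hence $(u_i-u_j)(u_k-u_\ell)=0$ in $A^2(\B)$, establishing isotropy. Combined with $\dim P_\cM\geq 2$, this gives $P_\cM\subseteq \RR_1(\B)$, and then Proposition \ref{prop:epi cv} applied to the surjection $\pi_1(M(\A))\twoheadrightarrow \pi_1(M(\B))$ embeds $P_\cM$ into $\RR_1(\A)$.

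\emph{From components to multinets.} For the converse, let $L$ be a positive-dimensional irreducible component of $\RR_1(\A)$. Since $M(\A)$ is formal, the tangent cone theorem identifies $L$ with the tangent space at $1$ of an irreducible component $T$ of the characteristic variety $\VV^1_1(M(\A))$ passing through the identity. By the Arapura--Beauville classification for smooth quasi-projective varieties, $T$ is pulled back from an orbifold Riemann surface $\Sigma$ of negative Euler characteristic via an admissible map $f\colon M(\A)\to\Sigma$. Projectivizing $f$ and taking closures yields a pencil of degree-$\ell$ hypersurfaces in $\CP^d$; the pencil members that factor into linear forms from $\A$ pick out a sub-arrangement $\B\subseteq \A$ together with its partition $(\A_1,\dots,\A_k)$, where $k$ is the number of such special fibers of $f$. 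Checking the axioms of Definition \ref{def:multinet} against the geometry of the pencil---the common degree gives axiom \ref{m1}, two hyperplanes from distinct classes intersect in a base point of the pencil giving \eqref{eq:base} and axiom \ref{m2}, the intersection multiplicity at each base flat gives the invariance \ref{m3}, and connectedness of the generic fiber with the special fiber removed gives \ref{m4}---produces a multinet $\cM$ on $\B$, and tracing through the construction identifies $L$ with $P_\cM$.

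\emph{Main obstacle and bijectivity.} The delicate step is the classification direction, which hinges on promoting an abstract resonance component to a genuine pencil of hypersurfaces and matching its combinatorial data against the four multinet axioms. This relies jointly on Arapura's structure theorem for $\VV^1_1$ of quasi-projective varieties, the tangent cone principle for formal spaces, and a careful analysis of the base locus of the resulting pencil. Injectivity of $\cM\mapsto P_\cM$ across sub-arrangements, which is needed to convert the two inclusions into a bijection, follows from Theorem \ref{thm:res arr}, part \ref{r2}, since distinct multinets would otherwise produce positive-dimensional components of $\RR_1(\A)$ meeting non-trivially.
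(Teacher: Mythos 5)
The paper offers no proof of this statement---it is quoted directly from \cite[Theorems 2.4 and 2.5]{FY07}---so you are supplying content the survey omits, and the broad architecture (isotropy of $P_\cM$ for the forward inclusion; Arapura/pencils for the converse; Theorem~\ref{thm:res arr} to close the bijection) is the right one. But the key intermediate claim in your isotropy argument is false: $u_iu_j$ is \emph{not} independent of the unordered pair $\{i,j\}$ in $A^2(\B)$. For the $(3,2)$-net on the braid arrangement of Example~\ref{ex:braid}, with $u_1=e_1+e_2$, $u_2=e_3+e_4$, $u_3=e_5+e_6$, the Orlik--Solomon relations at the triple points $136,145,235,246$ give $u_1u_2=u_1u_3-u_2u_3$ in $A^2$, and $u_2u_3=e_3e_5+e_3e_6+e_4e_5+e_4e_6\ne 0$ since its four terms lie in distinct local summands $A^2_X$; hence $u_1u_2\ne u_1u_3$. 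What does hold, and what you actually need, is $(u_i-u_j)(u_k-u_\ell)=0$. The correct route is to decompose this product over the flats $X\in\XX$ (axiom~\ref{m2} puts $H\cap H'$ in $\XX$ whenever $H,H'$ come from different classes), and then, at each $X$, note that the restricted vectors $a^{(i)}_X:=\sum_{H\in\A_i,\,H\supset X}m_He_H$ all have coordinate sum $n_X$ by axiom~\ref{m3}, so the differences $a^{(i)}_X-a^{(j)}_X$ lie in the local component $P_X$ of \eqref{eq:loc comp} and multiply to zero there. Relatedly, the displayed ``local OS relation'' $\sum_{H\supseteq X}(e_H-e_{H'})e_{H'}=0$ is not a relation in $A(\B)$; the degree-two relations at a rank-two flat are $\partial(e_{H_1}e_{H_2}e_{H_3})=0$.

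Your closing injectivity argument also fails. Theorem~\ref{thm:res arr}, part~\ref{r2}, says distinct irreducible \emph{components} of $\RR_1(\A)$ meet only at the origin; it does not rule out two distinct multinets $\cM\ne\cM'$ producing the \emph{same} subspace $P_\cM=P_{\cM'}$, which is precisely what would break the one-to-one correspondence. Injectivity has to be proved directly, by reconstructing $\cM$ from $P_\cM$: if $x=\sum_i c_i u_i\in P_\cM$ with $\sum_i c_i=0$, then $x_H=c_i m_H$ for $H\in\A_i$, so two hyperplanes $H,H'$ of the support $\B$ lie in the same class precisely when the coordinate functionals $x_H$ and $x_{H'}$ are proportional on $P_\cM$, with ratio $m_H/m_{H'}$; this recovers the partition and, after normalizing $\gcd$, the multiplicity vector. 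Once that is in place, part~\ref{r2} correctly prevents one $P_\cM$ from being a proper subspace of another, and your converse-direction sketch gives surjectivity.
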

Using now Theorem \ref{thm:res arr}, part \ref{r3}, we 
find that 
\begin{equation}
\label{eq:rsa}
\RR_s(\A) = \set{0}\cup \bigcup_{\B \subset \A} 
\bigcup_{\stackrel{\text{$\cM$ a multinet on $\B$}}
{\text{with at least $s+2$ parts}}} P_{\cM}.
\end{equation}

\subsection{Local and non-local components}
\label{subsec:r1a}

The simplest components of $\RR_1(\A)$ arise in 
the following fashion.  Let $X$ be a rank $2$ flat lying at the 
intersection of at least $3$ hyperplanes.   Recall from 
Example \ref{ex:local component} that $X$ determines 
a net on the sub-arrangement $\A_X$ consisting of those 
hyperplanes in $\A$ that contain $X$. 
The corresponding component of $\RR_1(\A)$,
\begin{equation}
\label{eq:loc comp}
P_X=\Big\{ x\in \mathbb{A}(\A) \,\big|\,  
 \text{$\sum_{H \supset X} x_H = 0$ and $x_H=0$ 
if $H \not\supset X$} \big. \Big\},
\end{equation}
has dimension $\abs{\A_X}-1$, and is called a {\em local}\/ 
component.

If $\abs{\A}\le 5$, then all components of $\RR_1(\A)$ are local.  
For $\abs{\A}\ge 6$, though, the resonance variety $\RR_1(\A)$ 
may have non-local components. It follows from 
Theorems \ref{thm:pyy} and \ref{thm:res fy} that 
any such component must have dimension either 
$2$ or $3$. 

\begin{example}
\label{ex:braid}  
Let $\A$ be the braid arrangement from Example \ref{ex:braid arr}. 
The variety $\RR_{1}(\A)\subset \C^6$ has $4$ local components, 
corresponding to the flats $136, 145, 235, 246$, and one non-local 
component, corresponding to the net $\cM$ indicated in Figure \ref{fig:braid}:
\[
\small{
\begin{aligned}
& P_{136}= \{ x_1 + x_3 + x_6=x_2=x_4=x_5=0 \} ,\ 
P_{145}=\{ x_1 + x_4 + x_5=x_2=x_3=x_6=0 \} ,  \\
& P_{235}= \{ x_2 + x_3 + x_5= x_1=x_4=x_6=0 \}, \
 P_{246}= \{ x_2 + x_4 + x_6=x_1=x_3=x_5=0 \} ,\\
& P_{\cM}=\{ x_1+x_3+x_6=x_1- x_2=x_3-x_4=x_5-x_6=0 \} .
\end{aligned}
}
\]
Since all these components are $2$-dimensional, $\RR_2(\A)=\{0\}$. 
\end{example}

\begin{example}
\label{ex:B3 bis}
Let $\A$ be the ${\rm B}_3$-arrangement from Example \ref{ex:B3}.  
Ordering the hyperplanes as the factors of the defining polynomial, 
the multinet $\cM$ indicated in Figure \ref{fig:b3 arr} has associated  
partition $(167|289|345)$. 

The variety $\RR_1(\A)\subset \C^9$
has $7$ local components, corresponding to $4$ triple points and $3$ 
quadruple points, $11$ components corresponding to braid sub-arrangements, 
and one essential, $2$-dimensional component, corresponding to the 
above multinet, 
\[
P_{\cM}=\set{x_1=x_6=x_7, \, 
x_2=x_8=x_9, \,
x_3=x_4=x_5,\, 
x_1+x_2+x_3=0}.
\]
\end{example}

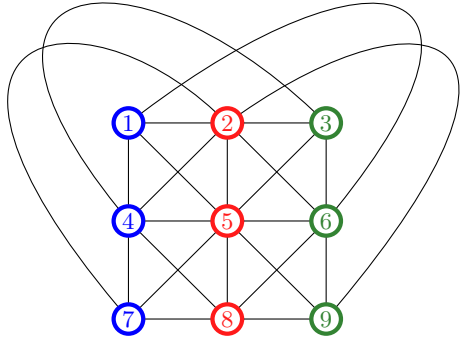
\begin{figure}
\centering
\begin{tikzpicture}[scale=0.65]
\path[use as bounding box] (-2,-0.5) rectangle (6,6.5); 
\path (0,4) node[draw, ultra thick, shape=circle, 
inner sep=1.3pt, outer sep=0.7pt,  color=blue] (v1) {\small{1}};
\path (2,4) node[draw, ultra thick, shape=circle, 
inner sep=1.3pt, outer sep=0.7pt, color=red!90!white] (v2) {\small{2}};
\path (4,4) node[draw, ultra thick, shape=circle, 
inner sep=1.3pt, outer sep=0.7pt,  color=dkgreen!80!white] (v3) {\small{3}};
\path (0,2) node[draw, ultra thick, shape=circle, 
inner sep=1.3pt, outer sep=0.7pt,  color=blue] (v4) {\small{4}};
\path (2,2) node[draw, ultra thick, shape=circle, 
inner sep=1.3pt, outer sep=0.7pt,  color=red!90!white] (v5) {\small{5}};
\path (4,2) node[draw, ultra thick, shape=circle, 
inner sep=1.3pt, outer sep=0.7pt, color=dkgreen!80!white] (v6) {\small{6}};
\path (0,0) node[draw, ultra thick, shape=circle, 
inner sep=1.3pt, outer sep=0.7pt, color=blue] (v7) {\small{7}};
\path (2,0) node[draw, ultra thick, shape=circle, 
inner sep=1.3pt, outer sep=0.7pt, color=red!90!white] (v8) {\small{8}};
\path (4,0) node[draw, ultra thick, shape=circle, 
inner sep=1.3pt, outer sep=0.7pt,  color=dkgreen!80!white] (v9) {\small{9}};
\draw (v1) -- (v2) -- (v3); \draw (v4) -- (v5) -- (v6); \draw (v7) -- (v8) -- (v9);
\draw (v1) -- (v4) -- (v7);  \draw (v2) -- (v5) -- (v8); \draw (v3) -- (v6) -- (v9);
\draw (v2) -- (v4) -- (v8) -- (v6) -- (v2);
\draw (v1) -- (v5) -- (v9); \draw (v3) -- (v5) -- (v7); 
\draw (v7) .. controls (-5,6) and (-1,6.8) .. (v2);
\draw (v4) .. controls (-4,7) and (0,7.8) .. (v3);
\draw (v1) .. controls (5,7.8) and (8,7) .. (v6);
\draw (v2) .. controls (6,6.8) and (9,6) .. (v9);
\end{tikzpicture}
\caption{The Ceva($3$) matroid,  
together with a $(3,3)$-net}
\label{fig:ceva3}
\end{figure}

\begin{example}
\label{ex:ceva3}
Let $\A$ be the Ceva($3$) arrangement, also known as the monomial arrangement 
of type $\A(3,3,3)$, associated to the complex reflection group $G(3,3,3)$, and 
defined by the polynomial $Q(\A)=(x^3-y^3)(y^3-z^3)(x^3-z^3)$. 
Figure \ref{fig:ceva3} shows the corresponding matroid, 
denoted ${\rm AG}(2,3)$ in \cite{Ox}.  

The resonance variety $\RR_1(\A)\subset \C^9$  has  
$12$ local components, corresponding to the triple points, 
and $4$ essential components corresponding to the 
$(3,3)$-nets defined by the partitions 
$(123 | 456 | 789)$, $(147 | 258 | 369)$, $(159 | 267 | 348)$, 
and $(168 | 249 | 357)$.  The first of these nets is depicted in 
Figure \ref{fig:ceva3}.  
\end{example}

The next example describes the only arrangement 
for which non-local components of dimension $3$ 
are known to occur.

\begin{example}
\label{ex:hesse}
Let $\A$ be the Hessian arrangement in $\C^3$, 
also known as the monomial arrangement of type 
$\A(3,1,3)$ associated to the complex reflexion group 
of type $G(3,1,3)$, and 
defined by the polynomial 
$Q(\A)=xyx(x^3-y^3)(x^3-z^3)(y^3-z^3)$.    
The projective configuration consists of $12$ projective 
lines meeting at $9$ quadruple points, and $12$ double points; 
each line contains $3$ quadruple points and $2$ double points.  
Figure~\ref{fig:hessian} shows the corresponding matroid, 
which is obtained from the projective plane over $\Z_3$ 
(also known as the ${\rm PG}(2,3)$ matroid, cf.~\cite{Ox}) 
by deleting a point.  The resonance 
variety $\RR_1(\A)$ has $9$ local components, 
and a single, $3$-dimensional, 
essential component, corresponding to the $(4,3)$-net  
depicted in Figure~\ref{fig:hessian}.
\end{example}

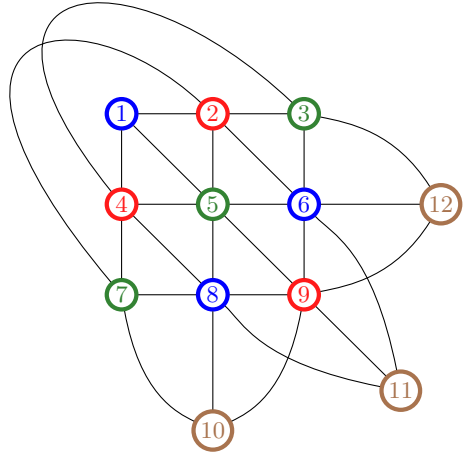
\begin{figure}
\centering
\begin{tikzpicture}[scale=0.6]
\path[use as bounding box] (-1.5,-3.5) rectangle (6,6.5); 
\path (0,4) node[draw, ultra thick, shape=circle, 
inner sep=1.3pt, outer sep=0.7pt,  color=blue] (v1) {\small{1}};
\path (2,4) node[draw, ultra thick, shape=circle, 
inner sep=1.3pt, outer sep=0.7pt, color=red!90!white] (v2) {\small{2}};
\path (4,4) node[draw, ultra thick, shape=circle, 
inner sep=1.3pt, outer sep=0.7pt,  color=dkgreen!80!white] (v3) {\small{3}};
\path (0,2) node[draw, ultra thick, shape=circle, 
inner sep=1.3pt, outer sep=0.7pt,  color=red!90!white] (v4) {\small{4}};
\path (2,2) node[draw, ultra thick, shape=circle, 
inner sep=1.3pt, outer sep=0.7pt,  color=dkgreen!80!white] (v5) {\small{5}};
\path (4,2) node[draw, ultra thick, shape=circle, 
inner sep=1.3pt, outer sep=0.7pt, color=blue] (v6) {\small{6}};
\path (0,0) node[draw, ultra thick, shape=circle, 
inner sep=1.3pt, outer sep=0.7pt, color=dkgreen!80!white] (v7) {\small{7}};
\path (2,0) node[draw, ultra thick, shape=circle, 
inner sep=1.3pt, outer sep=0.7pt, color=blue] (v8) {\small{8}};
\path (4,0) node[draw, ultra thick, shape=circle, 
inner sep=1.3pt, outer sep=0.7pt,  color=red!90!white] (v9) {\small{9}};
\draw (v1) -- (v2) -- (v3); \draw (v4) -- (v5) -- (v6); \draw (v7) -- (v8) -- (v9);
\draw (v1) -- (v4) -- (v7);  \draw (v2) -- (v5) -- (v8); \draw (v3) -- (v6) -- (v9);
\draw (v4) -- (v8); \draw (v2) -- (v6);
\draw (v1) -- (v5) -- (v9); 
\draw (v7) .. controls (-5,6) and (-1,6.8) .. (v2);
\draw (v4) .. controls (-4,7) and (0,7.8) .. (v3);
\path (2,-3) node[draw, ultra thick, shape=circle, 
inner sep=1.3pt, outer sep=0.7pt,  color=dkbrown!75!white] (v10) {\small{10}};
\path (6.12,-2.12) node[draw, ultra thick, shape=circle, 
inner sep=1.3pt, outer sep=0.7pt,  color=dkbrown!75!white] (v11) {\small{11}};
\path (7,2) node[draw, ultra thick, shape=circle, 
inner sep=1.3pt, outer sep=0.7pt,  color=dkbrown!75!white] (v12) {\small{12}};
\draw (v8) .. controls (2.65,-0.6) and (3,-1.5) .. (v11);
\draw (v6) .. controls (4.6,1.35) and (5.5,1) .. (v11);
\draw (v7) .. controls (0.4,-2.2) and (1.2,-2.6) .. (v10);
\draw (v9) .. controls (3.6,-2.2) and (2.8,-2.6) .. (v10);
\draw (v8) -- (v10);  \draw (v9) -- (v11);  \draw (v6) -- (v12); 
\draw (v3)  .. controls (5,3.8) and (6,3.6) .. (v12); 
\draw (v9) .. controls (5,0.2) and (6,0.4) ..  (v12); 
\end{tikzpicture}
\caption{The Hessian matroid, together with 
a $(4,3)$-net}
\label{fig:hessian}
\end{figure}

\subsection{Resonance in positive characteristic}
\label{subsec:res p}
 
The varieties $\RR^1_s(M(\A),\k)$ over fields $\k$ of 
characteristic $p>0$ were first defined and studied by Matei 
and Suciu in \cite{MS00}, and further investigated by Falk in \cite{Fa07}. 
As illustrated in the next few examples, all three    
properties from Theorem \ref{thm:res arr}, which hold 
over $\k=\C$, fail in this setting. 

\begin{example}
\label{ex:nonfano}
Let $\A$ be the realization of the non-Fano plane, 
obtained from the $\operatorname{B}_3$ arrangement 
from Figure \ref{fig:b3 arr} by deleting the planes $x=0$ 
and $y=0$, and order the hyperplanes as the factors of 
$Q(\A)=z(x+y)(x-y)(x+z)(x-z)(y+z)(y-z)$.  The variety 
$\RR_{1}(\A)$ has $6$ local components corresponding 
to triple points, and $3$ components arising from  braid 
sub-arrangements, but no essential components.

Now let $\k=\overline{\F_2}$. Then, as noted in \cite{MS00}, the 
variety $\RR^1_1(M(\A),\k) \subset \mathbb{A}_\k(\A) $ has a   
single non-local component, defined by the equations  
$x_1+x_4+x_5=x_1+x_6+x_7=x_2+x_5+x_6=x_3+x_5+x_7=0$, 
while $\RR^1_2(M(\A),\k)$ has a single, $1$-dimensional component, 
defined by the equations $x_4=x_5=x_6=x_7$ and $x_1=x_2=x_3=0$. 
Thus, property \ref{r3} fails.

It should be noted that the analogue of the tangent cone formula 
\eqref{eq:tc} also fails in this case.  Indeed, a computation from 
\cite{S01} shows that $\VV^1_2(M(\A),\k) =\{1\}$; thus, 
the tangent cone to this variety is $\{0\}$, which is 
is strictly included in $\RR^1_2(M(\A),\k)$.
\end{example} 

\begin{example}
\label{ex:db3}
Let $\A$ be the arrangement obtained by deleting 
the plane $z=0$ from the $\operatorname{B}_3$ arrangement, 
and let $\k=\overline{\F_2}$.  Then, as shown in  \cite{Fa07}, 
the variety $\RR^1_1(M(\A),\k)$ has two distinct components 
which intersect outside $0$.  Thus, property \ref{r2} fails.
\end{example}

\begin{example}
\label{ex:hesse again}
Let $\A$ be the Hessian arrangement from Example \ref{ex:hesse}, 
and let $\k=\overline{\F_3}$.  Then, as shown in  \cite{Fa07}, 
the variety $\RR^1_1(M(\A),\k)$ has a component which is 
isomorphic to an irreducible cubic hypersurface in $\k^5$.  
Thus, property \ref{r1} fails.
\end{example}

\begin{remark}
\label{rem:p-massey}
In \cite{Ma07}, Matei uses the resonance varieties $\RR^1_1(M(\A),\k)$ 
over a field $\k$ of characteristic an odd prime $p$
to detect non-vanishing Massey products in the cohomology 
ring $H^*(M(\A), \k)$ of certain arrangements $\A$.  
The simplest such example is the Hessian arrangement 
from Example \ref{ex:hesse}, for which $p=3$.
\end{remark}

These examples and the above remark raise a natural question.

\begin{problem}
\label{quest:p-formal}
Given an arrangement $\A$, find the primes $p$ (if any) for which 
$M(\A)$ is not $\k$-formal, over a field $\k$ of characteristic $p$.
\end{problem}

\section{The complement of an arrangement. II.}
\label{sect:ma2}

In this section we describe the fundamental group and the 
characteristic varieties of the complement of a complex 
hyperplane arrangement.

\subsection{Fundamental group}
\label{subsec:pi1arr}
As usual, let $\A$ be a hyperplane arrangement in $\C^{d+1}$. 
The complement $M(\A)$ is a path-connected space.  Thus, we 
may fix a basepoint $x_0\in M(\A)$, and consider the fundamental 
group $\pi_1(M(\A),x_0)$. 

For each hyperplane $H\in \A$, pick a meridian curve about $H$, 
oriented compatibly with the complex orientations on $\C^{d+1}$ and $H$, 
and let $x_H$ denote the based homotopy class of this curve, 
joined to the basepoint by a path in $M(\A)$.  By the van Kampen theorem, 
then, the group $\pi_1(M(\A),x_0)$ is generated by the set $\set{x_H : H\in \A}$.
An explicit presentation for this group can be obtained 
via the braid monodromy algorithm from \cite{CS97}. Let us 
briefly describe this algorithm.

Let $\A'$ be a generic two-dimensional section of $\A$.   
By the Lefschetz-type theorem of Hamm and L\^{e}, 
the inclusion $M(\A')\to M(\A)$ between the respective 
complements induces an isomorphism on fundamental 
groups.   Thus, for the purpose of computing fundamental 
groups, we may as well replace $\A$ by $\A'$.

So let $\A =\{\ell_{1},\dots ,\ell_{n}\}$ be an arrangement of  affine lines
in $\C^2$.  Let $v_1, \dots ,v_s$ be the intersection points of the lines. 
If $v = \ell_{i_1}\cap \dots \cap \ell_{i_r}$ is one such intersection point, write 
$I=(i_1,\dots,i_{r  })$ for the corresponding increasingly ordered tuple, 
and let $A_I$ be the braid in the pure braid group $P_n$ which 
performs a full twist on the strands corresponding to $I$, leaving 
the other strands fixed.   There are then braids $\delta_1,\dots, \delta_s$ 
in the full braid group $B_n$ such that the arrangement group 
has presentation
\begin{equation}
\label{eq:bmpres}
\pi_1(M(\A))=\langle x_1,\dots ,x_n \mid  \delta_q^{-1} A_{I_{s}} 
\delta_{q}(x_i)=x_i
\ \text{for } i \in I_q\setminus \max(I_q)  \text{ and } q\in [s]
\rangle.  
\end{equation}
 
Clearly, this is a commutator-relators presentation.  Furthermore, 
the presentation is minimal, in that the number of generators equals 
$b_1(M(\A))$, and the number of relators equals $b_2(M(\A))$.

If $\A$ is the complexification of a real arrangement, 
the conjugating braids $\delta_{q}$ may be obtained by a procedure 
that goes back to \cite{Hi93}.  Each vertex set $I_{q}$  gives rise to a partition
$[n] = I'_{q}\cup I_{q} \cup I''_{q}$ into lower, middle, and upper indices.  
Let $J_{q}=\{i \in I''_{q} \mid \min I_{q}<i<\max I_{q}\}$.  
Then $\delta_{q}$ is the subword of the full twist $A_{[n]}$ 
given by 
\begin{equation}
\label{eq:conjbraid}
\delta_{q} = \prod_{i\in I_{q}} \prod_{j\in J_{q}} A_{ji}.
\end{equation}

In the general case, the braids $\delta_q$ can be 
read off a ``braided wiring diagram"  associated to the 
arrangement, see \cite{CS97} for further details and references.

\begin{remark}
\label{rem:ryb}
In work from the early 1990s that appeared in \cite{Ryb}, 
Rybnikov constructed a pair of arrangements, $\A^+$ and $\A^-$, 
both realizations of the same matroid, for which 
$\pi_1(M(\A^+))\not\cong \pi_1(M(\A^-))$.
Thus, the fundamental group of an arrangement 
complement is {\em not}\/ 
determined by the intersection lattice, in general.
\end{remark}

\begin{remark}
\label{rem:lcs}
Nevertheless, if $G=\pi_1(M(\A))$ is an arrangement 
group, the ranks of the associated graded Lie algebra, 
$\phi_k=\rank ( \gr_k(G) )$, are combinatorially determined, 
due to the formality of $M(\A)$.  Moreover, as shown 
in \cite{PS-imrn}, the ranks of the Chen Lie algebra, 
$\theta_k=\rank ( \gr_k(G/G'') )$, are also combinatorially 
determined. As observed in \cite{S01}, there are interesting 
connections between the Chen ranks of the group $G$ 
and the dimensions of the components of the resonance 
variety $\RR_1(\A)$.  For a recent survey of these topics, 
we refer to \cite{De10}.
\end{remark}

\subsection{The universal abelian cover}
\label{subsec:abelcov}
 
Returning to the general situation, let $\A$ be an essential 
arrangement in $\C^{d+1}$.  Fix an ordering of the hyperplanes, 
say, $\A=\{H_1,\dots ,H_n\}$, and choose linear forms 
$f_i\colon \C^{d+1}\to \C$ with $\ker(f_i)=H_i$.  Assembling 
these forms together, we obtain an injective linear map
\begin{equation}
\label{eq:iota}
\iota(\A)\colon \C^{d+1}\to\C^n, \quad 
z\mapsto (f_1(z), \dots ,f_n(z)). 
\end{equation}

Now let $\B_n$ be the Boolean arrangement in $\C^n$, and identify 
$M(\B_n)$ with $(\C^*)^n$.  Recall that $z\in H_i$ if and only if 
$f_i(z)=0$; thus, the map $\iota(\A)$ restricts to an inclusion 
$\iota(\A)\colon M(\A)\inj M(\B_n)$.  Consequently, we may view  
the complement $M(\A)$ as a linear slice of the complex $n$-torus:
\begin{equation}
\label{eq:slice}
M(\A)=\iota(\A)(\C^{d+1})\cap (\C^*)^n.
\end{equation}
 
Clearly, the map $\iota(\A)\colon M(\A)\to(\C^*)^n$ 
is equivariant with respect to the diagonal action of $\C^*$ 
on both source and target.  Thus, $\iota(\A)$ descends to a map 
$\overline{\iota}(\A)\colon M(\A)/\C^*\inj (\C^*)^n/\C^*$, which 
defines an embedding $\overline{\iota}(\A)\colon U(\A) \inj(\C^*)^{n-1}$. 

\begin{lemma}[\cite{DeS12}]
\label{lem:H1ofA}
Let $\A=\{H_1,\dots ,H_n\}$ be a hyperplane arrangement.  Then,
\begin{enumerate}
\item \label{iota1}
The inclusion $\iota(\A)\colon M(\A)\inj (\C^*)^n$ is 
a classifying map for the universal abelian cover $M(\A)^{\ab}\to M(\A)$.
\item \label{iota2}
The inclusion $\overline{\iota}(\A)\colon U(\A)\inj (\C^*)^{n-1}$ 
is a classifying map for the universal abelian cover $U(\A)^{\ab}\to U(\A)$.
\end{enumerate}
\end{lemma}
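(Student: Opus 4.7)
The plan is to use the fact that each torus $(\C^*)^n$ is an Eilenberg--MacLane space of type $K(\Z^n,1)$. Consequently, a map $f\colon X\to (\C^*)^n$ from a connected CW-complex classifies the universal abelian cover of $X$ precisely when the induced map $f_*\colon H_1(X,\Z)\to \Z^n$ is an isomorphism. Both parts of the lemma therefore reduce to computations of induced maps on first homology.

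For part \ref{iota1}, recall from \S\ref{subsec:os alg} that $H_1(M(\A),\Z)=\Z^n$ has canonical basis given by the meridians $\{x_{H_i}\}$, while $H_1((\C^*)^n,\Z)=\Z^n$ has basis given by the meridians $\{\widetilde{x}_i\}$ about the coordinate hyperplanes of the Boolean arrangement $\B_n$. The claim is that $\iota(\A)_*(x_{H_i})=\widetilde{x}_i$. To verify this, pick a smooth point $p$ of $H_i$ lying on no other hyperplane, and represent $x_{H_i}$ by a small meridian loop $\gamma$ about $H_i$ based near $p$ (joined to the basepoint by a path); the $j$-th coordinate of $\iota(\A)\circ\gamma$ is the loop $f_j\circ\gamma$ in $\C^*$. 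For $j\ne i$, this loop stays near the nonzero value $f_j(p)$ and is null-homotopic, while for $j=i$ the form $f_i$ vanishes transversally along $H_i$, so $f_i\circ\gamma$ winds once around the origin with the correct orientation coming from the complex structure. Thus $\iota(\A)_*$ sends the canonical basis bijectively to the canonical basis, establishing part \ref{iota1}.

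For part \ref{iota2}, the equivariance of $\iota(\A)$ under the diagonal $\C^*$-actions gives a commutative square of $\C^*$-bundles
\[
\begin{CD}
M(\A) @>{\iota(\A)}>> (\C^*)^n \\
@VVV @VVV \\
U(\A) @>{\overline{\iota}(\A)}>> (\C^*)^{n-1}
\end{CD}
\]
Naturality of the long exact homology sequence of a $\C^*$-bundle yields a commutative square on $H_1$, in which the vertical arrows are surjections whose kernels are generated by the classes of the respective fibers. On the left, this class is $\sum_{H\in\A} x_H$ by \eqref{eq:h1u}; on the right, it is the image $(1,\dots,1)\in \Z^n$ of the diagonal $\C^*$. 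Under the top isomorphism from part \ref{iota1} these two classes correspond, so passing to quotients identifies $\overline{\iota}(\A)_*$ with an isomorphism $\Z^n/\angl{(1,\dots,1)}\isom \Z^n/\angl{(1,\dots,1)}$.

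The only delicate point is the local verification that $\iota(\A)_*(x_{H_i})$ is indeed the canonical generator $\widetilde{x}_i$, with the correct sign dictated by the compatible complex orientations on $\C^{d+1}$ and $H_i$; once that transversality computation is in hand, the rest is a formal consequence of the $K(\Z^n,1)$ property of the torus and the naturality of the $\C^*$-bundle quotient.
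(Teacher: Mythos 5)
Your proof is correct and follows essentially the same approach as the paper's: use the $K(\Z^n,1)$ property of the torus, reduce to checking that $\iota(\A)$ induces the identity on $H_1$ in the meridian bases, and pass to the projectivized statement by quotienting by the diagonal $\C^*$-action (equivalently, by the fiber class $\sum_H x_H$, cf.\ \eqref{eq:h1u}). You simply spell out two steps the paper leaves terse: the local transversality computation justifying $\iota(\A)_*(x_{H_i})=\widetilde{x}_i$, which the paper dismisses with "plainly," and the bundle-naturality argument for part \ref{iota2}, which the paper abbreviates as "an argument as above."
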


\begin{proof}
We start with the first statement.  Set $\iota=\iota(\A)$. 
Since $(\C^*)^n$ is a classifying 
space for the group $\pi_1(M(\A))_{\ab}=\Z^n$, 
it is enough to to check that the induced homomorphism, 
$\iota_{\sharp}\colon \pi_1(M(\A)) \to \Z^n$,  
coincides with the abelianization map 
$\ab\colon \pi_1(M(\A)) \to \pi_1(M(\A))_{\ab}$. 
By naturality of the Hurewicz homomorphism, we 
have a commuting diagram
\begin{equation}
\label{eq:hur}
\xymatrix{
\pi_1(M(\A)) \ar^{\iota_{\sharp}}[r]  \ar@{->>}^{\ab}[d]
& \pi_1(M(\B_n))  \ar@{=}[d] \\
H_1(M(\A),\Z) \ar^{\iota_{*}}[r]& H_1(M(\B_n),\Z)\, .
}
\end{equation}

Plainly, the homomorphism $\iota_*$ 
takes each meridian $x_H$ in $H_1(M(\A),\Z)=\Z^n$ 
to the corresponding meridian in $H_1(M(\B_n),\Z)=\Z^n$.  
Hence, in these meridian bases, the map $\iota_*\colon \Z^n \to \Z^n$ 
is the identity map, and we are done.

For the second statement, recall from \eqref{eq:h1u} 
that $H_1(U(\A),\Z)=\Z^n \slash \big(\sum_{H\in\A} x_H\big)$.
An argument as above shows that 
$\overline{\iota}_{\sharp}(\A)\colon\pi_1(U(\A))\to \Z^n/(1,\dots,1)$ 
is the abelianization map, and this ends the proof.
\end{proof}

\subsection{Characteristic varieties}
\label{subsec:cv arr}
Let $G=\pi_1(M(\A))$ and $\overline{G}=\pi_1(U(\A))$ 
be the fundamental groups of the complement and 
the projectivized complement, respectively. 
Let $\k$ be an algebraically closed field.  
The isomorphism $H_1(M(\A),\Z) \cong \Z^n$ allows us to 
identify the character group $\Hom(G,\k^*)$ 
with the algebraic torus $(\k^{*})^n$. 
Similarly, the isomorphism $H_1(U(\A),\Z) \cong \Z^n/(1,\dots, 1)$ 
allows us to identify the character group $\Hom(\overline{G},\k^*)$ 
with the algebraic torus 
\begin{equation}
\label{eq:torus}
\mathbb{T}_\k(\A)=\{t \in (\k^{*})^n \mid t_1\cdots t_n=1\}\cong (\k^{*})^{n-1}.
\end{equation}

Let $\VV^q_s(M(\A),\k)$ be the characteristic varieties of the 
arrangement complement.  From the general theory reviewed 
in \S\ref{subsec:char var}, we know that each set $\VV^q_s(M(\A),\k)$ 
is a Zariski closed subset of the algebraic torus $(\k^{*})^n$. 
The diffeomorphism \eqref{eq:mu}, together with the 
product formula \eqref{eq:cvprod} yields an identification 
$\VV^q_1(M(\A),\k)\cong \VV^q_1(U(\A),\k)\cup 
\VV^{q-1}_1(U(\A),\k)$.  In particular, we 
may view the characteristic varieties of $\A$ 
as lying in the torus \eqref{eq:torus}.

\begin{example}
\label{ex:circle}
Let $\A$ be the arrangement in $\C$ consisting of the 
single point $0$. Then $M(\A)=\C^*$ is homotopy equivalent  
to a circle $S^1$. Under the identifications  
$\pi_1(S^1,1)=\Z$ and $\Z\Z=\Z[t^{\pm 1}]$,  
the equivariant chain complex of the universal cover 
$\widetilde{S^1}=\R$ takes the form 
$0 \to C_1 \xrightarrow{\partial_1} C_0 \to 0$, 
where $C_1=C_0=\Z\Z$ and $\partial_1(1)=t-1$.  

Given a character $\rho\in \Hom(\Z,\k^*)=\k^{*}$, 
we tensor this chain complex with the  
local system $\k_\rho$, and obtain a new chain 
complex, $0 \to \k \xrightarrow{\rho -1 } \k \to 0$.  
This chain complex is exact, except for $\rho=1$, 
when $H_0=H_1=\k$.   The upshot  is that 
$\VV^0_1(S^1,\k)=\VV^1_1(S^1,\k)=\{1\}$ 
and $\VV^i_s(S^1,\k)  = \emptyset$, otherwise.
\end{example}

\begin{example}
\label{ex:cv boolean}
More generally, let $\B_n$ be the Boolean arrangement 
in $\C^n$, with complement $M(\B_n)=(\C^*)^n$.
Proceeding as in the previous example, we see that 
\begin{equation}
\label{eq:cv nil}
\VV^q_s(M(\B_n),\k)=
\begin{cases}
\{1\}& \text{if $s\le \binom{n}{q}$},\\[2pt]
 \emptyset & \text{otherwise}.
\end{cases}
\end{equation} 
\end{example}

\begin{example}
\label{ex:cv free}
Let $\mathcal{P}_n$ be a pencil of $n+1\ge 2$ lines in 
$\C^2$ as in Example \ref{ex:pencil}.  
Recall that $U(\mathcal{P}_n)\cong \C\setminus \set{\text{$n$ points}}$, 
and thus $\pi_1(U(\mathcal{P}_n))\cong F_{n}$. Identifying 
$\T_{\k}(\mathcal{P}_n)$ with 
$(\k^*)^{n}$, we find that 
\begin{equation}
\label{eq:cv free}
\V^q_s(M(\mathcal{P}_n),\k)=\begin{cases}
(\k^*)^{n} &\text{if $q=1$ and $s< n$},\\
\set{1} & \text{if $q=1$ and $s=n$ or $q=0$ and $s=1$},\\
\emptyset & \text{otherwise}.
\end{cases}
\end{equation}
\end{example}

Although an explicit formula for the characteristic varieties 
$\VV^q_s(M(\A),\k)$ is far from known in general, a 
structural result is known in the case when $\k=\C$.    
A theorem of Arapura \cite{Ar}, as strengthened by 
Dimca \cite{Di07}, Libgober \cite{Li09}, 
Artal-Bartolo, Cogolludo and Matei \cite{ACM}, 
Budur and Wang \cite{BW}, and others, 
states the following.

\begin{theorem}
\label{thm:arapura}
The characteristic varieties $\VV^q_s(M(\A),\C)$ 
of an arrangement complement are finite unions of 
torsion-translates of algebraic subtori in $\mathbb{T}_{\C}(\A)$.
\end{theorem}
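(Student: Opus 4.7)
The plan is to leverage the fact that $M(\A)$ is a smooth, connected quasi-projective variety (in fact, a Zariski-open subset of the affine variety $\C^{d+1}$), and then apply the general structure theorem for cohomology jump loci in this setting, which is exactly what Arapura proved in degree $q=1$ and what the subsequent authors sharpened and extended to all $q$.

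First I would fix a good smooth compactification $\iota\colon M(\A)\hookrightarrow \overline{M}$ with simple normal crossings divisor at infinity (for instance, a De~Concini--Procesi wonderful model). For the $q=1$ statement, I would follow Arapura's original strategy: given a positive-dimensional irreducible component $W\subset \VV^1_s(M(\A),\C)$, use the non-abelian Hodge correspondence on $\overline{M}$ (parabolic Higgs bundles corresponding to tame local systems on $M$) together with a Castelnuovo--de Franchis type argument to produce an \emph{admissible map} $f\colon M(\A)\to S$ onto a smooth quasi-projective curve (or $1$-dimensional orbifold) $S$ with $\chi^{\mathrm{orb}}(S)<0$, such that $W=\rho\cdot f^{*}\Hom(\pi_{1}^{\mathrm{orb}}(S),\C^{*})$ for some character $\rho$. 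This realizes $W$ as the translate of an algebraic subtorus.

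Next, to upgrade these translates to \emph{torsion} translates, I would invoke the Simpson conjecture, now a theorem in full generality due to Budur--Wang. Concretely, one shows that each irreducible component of $\VV^q_s(M(\A),\C)$ is stable under the Galois action on torsion points of the character torus $\mathbb{T}_{\C}(\A)$, forcing the translation character $\rho$ to be torsion; equivalently, the component can be written as $\rho T$ with $T$ an algebraic subtorus and $\rho$ of finite order. For zero-dimensional components (isolated points), the same torsion statement follows because isolated jump points are characters of finite order by the Hodge-theoretic rigidity argument originating in Simpson's work.

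Finally, for general $q\ge 1$, I would appeal to the Budur--Wang extension of Arapura's theorem to arbitrary cohomological degree, which is proved by showing that the cohomology jump loci of any constructible complex of geometric origin on $\overline{M}$ are finite unions of torsion-translated subtori; applied to $R\iota_{*}\C_{M(\A)}$, this yields the statement for $\VV^q_s(M(\A),\C)$ in every degree. I expect the genuinely hard step to be the torsion assertion for components that do not pass through the origin: it requires the full Hodge-theoretic machinery (mixed Hodge modules, or parabolic Higgs bundles with rational residues), whereas the mere ``translated subtorus'' conclusion is already obtainable from the more classical Beauville--Arapura arguments applied to the smooth quasi-projective variety $M(\A)$.
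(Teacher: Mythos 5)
Your proposal correctly identifies the ingredients from the literature, but it is worth noting that the paper gives no proof of this theorem: it is stated as a quotation of the structural result of Arapura \cite{Ar}, as sharpened by Dimca, Libgober, Artal Bartolo--Cogolludo--Matei, and finally Budur--Wang \cite{BW}, who settled both the torsion-translate assertion and the extension to all cohomological degrees. Your sketch is a faithful summary of what those cited proofs actually do: Arapura's non-abelian Hodge/Castelnuovo--de~Franchis argument for the positive-dimensional components of $\VV^{1}_{1}$, the realization of those components as pullbacks along admissible maps to orbifold curves, and then Budur--Wang's mixed-Hodge-module machinery to upgrade arbitrary translates to torsion translates and to treat $q\ge 2$ and higher depth. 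One small point of phrasing: Budur--Wang's theorem is usually stated for cohomology jump loci of rank-one local systems on the smooth quasi-projective variety $M(\A)$ itself (equivalently, for constructible complexes of geometric origin on a good compactification), so invoking it directly for the local system cohomology of $M(\A)$ is the cleanest formulation; there is no need to single out $R\iota_{*}\C_{M(\A)}$. You are also right that the genuinely hard step is the torsion assertion for components missing the origin, which is precisely what earlier partial results (Dimca, Libgober, Artal Bartolo--Cogolludo--Matei) chipped away at before Budur--Wang's general theorem. Since the paper treats the theorem as a black box and your sketch accurately reproduces the strategy of the cited sources, there is no substantive gap here.
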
  

\subsection{Orbifold fibrations}
\label{subsec:orbi}

We now describe in more detail the characteristic varieties 
of $M(\A)$ in degree $q=1$.  Since these 
varieties depend only on $\pi_1(M(\A))$, we may as well 
assume $\A$ is a (central) arrangement in $\C^3$. 

A key point here is the naturality property enjoyed 
by these varieties. 
Suppose $f\colon U(\A) \to U(\B)$ is a map 
between (projectivized) arrangement complements,  
and that the induced homomorphism, 
$f_{\sharp} \colon \pi_1(U(\A)) \to \pi_1(U(\B))$, 
is surjective.  Then, by Proposition \ref{prop:epi cv}, the 
corresponding monomorphism 
between character groups restricts to an embedding 
$\VV^1_s(\B,\k) \inj \VV^1_s(\A,\k)$.  
In particular, if $\B\subset \A$ is a sub-arrangement, the 
inclusion $U(\A) \inj U(\B)$ induces an epimorphism on 
fundamental groups, and thus defines an embedding 
$\VV^1_s(\B,\k) \inj \VV^1_s(\A,\k)$.  

Building on Arapura's work, Dimca \cite{Di07} 
and Artal Bartolo, Cogolludo and Matei \cite{ACM} showed 
that the varieties $\VV_s(\A) = \VV^1_s(M(\A),\C)$ are 
unions of torsion-translated subtori inside the algebraic torus 
$\mathbb{T}(\A)=\mathbb{T}_{\C}(\A)$.  
The isolated (torsion) points in these characteristic 
varieties are still poorly understood, but the  
positive-dimensional components in $\VV_1(\A)$ 
can be described in very concrete terms. 

A (genus $0$) {\em orbifold fibration}\/ is a surjective holomorphic 
map $f\colon U(\A)\to U(\mathcal{P}_r)$, for some $r\ge 1$, 
that has connected generic fiber, and extends to 
a map between the respective compactifications, 
$\overline{f}\colon \CP^2\to \CP^1$, which is 
also a surjective, holomorphic map with connected 
generic fibers. 

The map $f$ is a locally trivial bundle 
map, away from a finite (possibly empty) set of points 
$\set{q_1,\dots, q_s}$ inside $U(\mathcal{P}_r)$;  
let $\mu_1, \dots , \mu_s$  ($\mu_i\ge 2$) denote the 
multiplicities of the respective fibers.  Let $f_{\sharp} \colon 
\pi_1(U(\A))\to \pi_1(U(\mathcal{P}_r))$ be the induced 
homomorphism on fundamental groups. Since the generic 
fiber of $f$ is connected, this homomorphism is surjective. 
Moreover,  $f_{\sharp}$ lifts to a (surjective) homomorphism 
$ f_{\sharp} \colon \pi_1(U(\A))\surj \pi$, where 
\begin{equation}
\label{eq:piorb}
\pi:=\pi_1^{\orb} ( U(\mathcal{P}_r),\mu) = 
F_{r} * \Z_{\mu_1} * \cdots * \Z_{\mu_s}
\end{equation}
is the orbifold fundamental group of the base. Thus, 
$ f_{\sharp}$ induces an injective morphism, 
$\widehat{f_{\sharp}} \colon \widehat{\pi} \inj  
\widehat{\pi_1(U(\A))}$, between character groups.
Note that  $\widehat{\pi}= \widehat{\pi}^{\circ} \times \widehat{A}$  
where $\widehat{\pi}^{\circ}\cong (\C^*)^{r}$ is the identity component  
and $\widehat{A} \cong A:=\Z_{\mu_1}\oplus \cdots \oplus \Z_{\mu_s}$.

By Proposition \ref{prop:epi cv}, the above morphism restricts 
to an inclusion $\widehat{f_{\sharp}} \colon \VV_1(\pi)\inj 
\VV_1(\A)$.  A computation as in Example \ref{ex:circle} 
shows that
\begin{equation}
\label{eq:v1piorb}
\V_1(\pi)=\begin{cases}
\widehat{\pi} & 
\text{if $r>1$},
\\[2pt]
\big( \widehat{\pi}\setminus 
\widehat{\pi}^{\circ}\big) \cup  \{\bo \} 
& \text{if $r=1$ and $s\ge 1$}.
\end{cases}
\end{equation}
Note that each irreducible component of $\VV_1(\pi)$ has 
dimension $r$.

The next theorem says that {\em all}\/ positive-dimensional 
components of $\V_1(\A)$ arise by pullback along 
such orbifold fibrations, which we call {\em large}\/ in the 
first case, and {\em small}\/ in the second case. 

\begin{theorem}[\cite{Ar, Di07, ACM}]
\label{thm:v1a}
The first characteristic variety of an arrangement $\A$ is given by
\[
\V_1(\A)=\bigcup_{\text{$f$ large}} \im(\widehat{f_{\sharp}}) \cup 
\bigcup_{\text{$f$ small}} \big( \im(\widehat{f_{\sharp}}) \setminus 
\im(\widehat{f_{\sharp}})^{\circ} \big) \cup Z, 
\]
where the unions are over the equivalence classes of pencils 
$f\colon U(\A) \to U(\mathcal{P}_r)$ of the types indicated, 
and $Z$ is a finite set of torsion characters. 
\end{theorem}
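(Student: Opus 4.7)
The plan is to combine the structural result of Arapura (Theorem \ref{thm:arapura}) with a careful analysis of which torsion-translated subtori actually arise as images of orbifold pencils. Since $\VV_1(\A)$ depends only on $\pi_1(U(\A))$, I may pass to a generic two-dimensional section and assume $U(\A)$ is a smooth quasi-projective surface embedded as the complement of a line arrangement in $\CP^2$. By Theorem \ref{thm:arapura}, every irreducible component of $\VV_1(\A)$ is of the form $\rho \cdot T$ with $T$ an algebraic subtorus of $\mathbb{T}(\A)$ and $\rho$ a torsion character. The zero-dimensional components are collected into the finite set $Z$, so it suffices to realize each positive-dimensional component $W$ as an $\im(\widehat{f_{\sharp}})$ (in the large case) or as a positive-dimensional translate sitting inside such an image (in the small case).

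For the geometric realization, I would invoke Arapura's theorem in the form refined by Dimca and Artal-Bartolo--Cogolludo--Matei: every positive-dimensional component $W$ of $\VV^1(X,\C)$ for a smooth quasi-projective $X$ arises from an \emph{admissible map} $f\colon X \to C$ onto a smooth curve $C$ of negative orbifold Euler characteristic, in the sense that $W = \rho \cdot \widehat{f_{\sharp}}(\VV_1(\pi_1^{\orb}(C)))$ for some torsion character $\rho$. The key step is to identify the possible targets when $X = U(\A)$: by resolving indeterminacies of the rational map $\overline{f}\colon \CP^2 \dashrightarrow \overline{C}$ and using that the image must avoid a finite set of points coming from the lines of $\A$, one shows $\overline{C}\cong \CP^1$ and $C = U(\mathcal{P}_r)$ for some $r\ge 1$, possibly equipped with orbifold weights $\mu_1,\dots,\mu_s$ at multiple fibers.

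The dichotomy large versus small is then read off from \eqref{eq:v1piorb}, which I obtain by a direct computation using the free-product presentation $\pi = F_r * \Z_{\mu_1} * \cdots * \Z_{\mu_s}$. When $r \ge 2$, the identity component $\widehat{\pi}^{\circ} \cong (\C^*)^r$ is already contained in $\VV_1(\pi)$, so by naturality (Proposition \ref{prop:epi cv}) the entire image $\im(\widehat{f_{\sharp}})$ sits inside $\VV_1(\A)$ as a component; this yields the large case. When $r = 1$, only the torsion translates $\widehat{\pi} \setminus \widehat{\pi}^{\circ}$ are positive-dimensional components of $\VV_1(\pi)$, while the isolated point $\mathbf{1}$ either coincides with the origin or is absorbed into $Z$; this yields the small case. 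Conversely, equality $W = \im(\widehat{f_{\sharp}})$ (rather than mere inclusion) holds because both sides are irreducible closed subvarieties of the same dimension, and the reverse containment follows from the fact that $W$ was reconstructed from $f$.

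The main obstacle is the geometric step of the second paragraph: producing the orbifold pencil $f$ from a component $W$, and identifying its base as $U(\mathcal{P}_r)$ with the correct multiplicities $\mu_i$. This is where the full strength of Arapura's argument is needed, together with the refinements that guarantee the admissible target is rational and that translated components (those with $\rho \ne \bo$) correspond precisely to small pencils whose orbifold structure contributes the torsion factor $\widehat{A} \cong \Z_{\mu_1}\oplus \cdots \oplus \Z_{\mu_s}$. The remaining verification, that the various $\im(\widehat{f_{\sharp}})$ for inequivalent pencils give distinct components and that all non-pencil components are isolated torsion points, is then a bookkeeping exercise using naturality and the product structure on character tori.
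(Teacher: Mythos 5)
The paper does not prove Theorem \ref{thm:v1a}; it states it as a result attributed to \cite{Ar, Di07, ACM}, with the preceding discussion (naturality via Proposition \ref{prop:epi cv}, the orbifold fundamental group $\pi$ of the base, and the computation \eqref{eq:v1piorb}) serving as conceptual background. Your proposal reconstructs this same background and then defers --- as you explicitly acknowledge in your final paragraph --- to the hard geometric content of Arapura's theorem and its refinements for the key steps: producing an orbifold pencil from a given positive-dimensional component, showing the target curve must be rational (indeed a punctured $\CP^1$) when the source is an arrangement complement, and controlling the isolated points. That is the appropriate level of detail for a statement the paper itself presents without proof, so you are taking essentially the same approach.

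Two small points. First, writing $W = \rho \cdot \widehat{f_{\sharp}}(\VV_1(\pi_1^{\orb}(C)))$ with an external torsion character $\rho$ mixes Arapura's original formulation (where the target is a smooth curve with no orbifold structure, so the translation $\rho$ is genuinely needed) with the refined version used here, where the torsion translation is already absorbed into $\pi = F_r * \Z_{\mu_1} * \cdots * \Z_{\mu_s}$ via the finite summands; once $\widehat{A}$ is in play the extra $\rho$ is redundant, and the theorem's formula has no such external factor. Second, the closing claim that ``all non-pencil components are isolated torsion points\dots is then a bookkeeping exercise'' overstates matters: that every positive-dimensional component arises from a pencil \emph{is} the content of Arapura's theorem, and that the $0$-dimensional components are torsion is a further deep input (Budur--Wang). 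Neither follows from naturality or the product structure on character tori; the bookkeeping begins only after those theorems are granted.
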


\subsection{Multinets and pencils}
\label{subsec:pencils}

As shown by Falk and Yuzvinsky in \cite{FY07}, the 
irreducible components of $\VV_1(\A)$ passing through 
the origin can be described in terms of multinets on 
the intersection lattice of $\A$.   Indeed, 
let $Q(\A)=\prod_{H\in \A} f_H$ be a defining polynomial for $\A$. 
Given a multinet $\cM$ on $\A$, with parts 
$(\A_1,\dots ,\A_k)$ and multiplicity vector $m$, write 
\begin{equation}
\label{eq:factors}
g_i=\prod_{H\in\A_i}f_H^{m_H},
\end{equation}
for $1\leq i\leq k$, so that the defining polynomial for $(\A,m)$ 
factors as $Q_m(\A)=g_1\cdots g_k$.  Next, define a rational map 
$f_m\colon \C^3\to\P^1$ by 
\begin{equation}
\label{eq:FYpencil}
f_m(x)=[g_1(x):g_2(x)].
\end{equation}

By definition of multinets, the degrees of the polynomials 
$g_i$ are independent of $i$; hence, the map $f_m$ factors 
through a rational map $\CP^2 \dashrightarrow \CP^1$.   
As shown in \cite{FY07}, there is a set 
$S=\set{[a_1\co b_1], \dots , [a_k\co b_k]} \subset \CP^1$
such that each of the polynomials  \eqref{eq:factors} 
can be written as $g_i=a_ig_2-b_ig_1$, and, furthermore, 
the image of $f_m\colon U(\A)\to\CP^1$ misses $S$.   Identify 
$\CP^1\setminus S=U(\mathcal{P}_{k-1})$.  
The restriction 
\begin{equation}
\label{eq:m pencil}
f_m\colon U(\A) \to U(\mathcal{P}_{k-1}),
\end{equation}
then, is a large pencil which gives rise by pullback 
to a $(k-1)$-dimensional component of $\VV_1(\A)$ 
which passes through the origin of $\mathbb{T}(\A)$.

\begin{example}
\label{ex:cv braid}
Let $\A$ be the braid arrangement from Examples \ref{ex:braid arr} 
and \ref{ex:braid}.  The characteristic variety $\VV_1(\A)$ has $4$ 
local components of dimension $2$, corresponding to the $4$ triple points.  
Additionally, the $(3,2)$-net $(\A,m)$ depicted in Figure \ref{fig:braid}
defines a rational map, $f_m\colon \CP^2 \dashrightarrow \CP^1$, 
$(x,y,z) \mapsto (x^2-y^2,x^2-z^2)$.  This map restricts to 
a pencil $f_m\colon U(\A)\to \CP^1 \setminus \{ (1,0), (0,1), (1,1) \}$, 
which yields another $2$-dimensional component, 
$T=\{ t\in (\C^*)^6 \mid t_1 t_3 t_6=t_1 t_2^{-1}=
t_3 t_4^{-1}=t_5t_6^{-1}=1 \}$.  
\end{example}

\subsection{Translated tori}
\label{subsec:tt}

In general, the characteristic variety $\VV_1(\A)$ also has  
irreducible components not passing through the origin. 
Following \cite{DeS12}, we describe a combinatorial 
construction which, under certain assumptions, produces 
$1$-dimensional translated subtori in $\VV_1(\A)$.

Fix a hyperplane $H\in \A$, and let   
$\A'=\A\setminus \{H\}$ be the {\em deletion}\/ 
of $\A$ with respect to $H$.  
A {\em pointed multinet}\/  on $\A$ is a multinet  
$\cM=((\A_1,\dots, \A_k), m, \XX)$, together with a distinguished 
hyperplane $H\in \A$ for which $m_H>1$, and $m_H \mid n_X$ 
for each flat $X\in \XX$ such that $X\ge H$.

\begin{prop}[\cite{DeS12}]
\label{prop:del}
Suppose $\A$ admits a pointed multinet, and $\A'$ is obtained 
from $\A$ by deleting the distinguished hyperplane $H$.  
Then $\VV_1(\A')$ has a component which is a $1$-dimensional 
subtorus of $\mathbb{T}(\A')$, translated by a character of 
order $m_H$.
\end{prop}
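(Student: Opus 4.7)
Since $m_H>1$, Theorem~\ref{thm:pyy}(\ref{multi2}) forces $k=3$, so $\cM=((\A_1,\A_2,\A_3),m,\XX)$; after relabeling we may assume $H\in\A_1$. As in \eqref{eq:factors}, set $g_i=\prod_{H'\in\A_i}f_{H'}^{m_{H'}}$, so that $\deg g_1=\deg g_2=\deg g_3=\ell$ and $g_1=f_H^{m_H}\tilde g_1$ with $\tilde g_1=\prod_{H'\in\A_1\setminus\{H\}}f_{H'}^{m_{H'}}$. The plan is to promote the large pencil $f_m=[g_1:g_2]\colon U(\A)\to U(\mathcal{P}_2)$ of \eqref{eq:m pencil} to a \emph{small} orbifold pencil $f\colon U(\A')\to U(\mathcal{P}_1)$ carrying a single multiple fiber of multiplicity $m_H$, and then apply Theorem~\ref{thm:v1a} together with \eqref{eq:v1piorb}.

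The first step is to check that $f_m$ extends to a holomorphic map $f\colon U(\A')\to U(\mathcal{P}_1)$. Write $U(\A')=U(\A)\cup(H\cap U(\A'))$. On the added locus $H\cap U(\A')$ one has $f_H=0$, hence $g_1=0$, while $g_2$ cannot vanish there: any zero of $g_2$ on $H$ lies on some $H''\in\A_2\subset\A'$, hence outside $U(\A')$. Thus $f$ is defined on all of $U(\A')$ and sends $H\cap U(\A')$ to $p_1:=[0:1]$; the same argument with $g_3$ shows that the image of $f|_{U(\A')}$ still avoids $p_2$ and $p_3$ (the images of the reducible fibers $\{g_2=0\}$ and $\{g_3=0\}$), so $f\colon U(\A')\to \CP^1\setminus\{p_2,p_3\}=U(\mathcal{P}_1)$.

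Next, I would verify that $f$ is an orbifold fibration whose unique multiple fiber (at $p_1$) has multiplicity $m_H$. The generic fiber of $f$ coincides with the generic fiber of the large pencil on $U(\A)$, hence is connected; so $f_{\sharp}\colon\pi_1(U(\A'))\twoheadrightarrow\pi$ surjects onto the orbifold fundamental group $\pi=F_1*\Z_{m_H}=\Z*\Z_{m_H}$ given by \eqref{eq:piorb} with $r=1$, $s=1$. In a local parameter $u=f_H$ at a smooth point of $H$ inside $U(\A')$, one has $f=u^{m_H}\cdot(\text{unit})$, which gives the $m_H$-fold multiplicity along $H$. The hypothesis $m_H\mid n_X$ for every $X\in\XX$ with $X\supset H$ is precisely what is needed after blowing up the base locus $\XX$: it guarantees that on the exceptional divisors $E_X$ meeting the proper transform of $H$, the fiber of the resolved pencil over $p_1$ has multiplicity $n_X$ divisible by $m_H$, so that $f$ factors, as an orbifold map, through the $m_H$-th-power structure at $p_1$.

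Finally, $\widehat{\pi}=\widehat{\pi}^{\circ}\times\widehat{\Z_{m_H}}$, and \eqref{eq:v1piorb} with $r=1$, $s=1$ reads
\[
\V_1(\pi)=(\widehat{\pi}\setminus\widehat{\pi}^{\circ})\cup\{\bo\},
\]
a union of $m_H-1$ one-dimensional subtori, each translated from $\widehat{\pi}^{\circ}$ by a nontrivial element of $\widehat{\Z_{m_H}}$. By Proposition~\ref{prop:epi cv}, $\widehat{f_{\sharp}}$ embeds $\V_1(\pi)$ into $\V_1(\A')$ while preserving the orders of the torsion translates; selecting the translate of $\widehat{\pi}^{\circ}$ by a generator of $\widehat{\Z_{m_H}}$ yields the desired one-dimensional component of $\V_1(\A')$, translated by a character of order exactly $m_H$. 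The main obstacle is the second step: confirming that the holomorphic map $f$ assembles into a bona fide orbifold fibration with the precise multiplicity $m_H$, which is where the divisibility condition built into the pointed-multinet hypothesis becomes indispensable.
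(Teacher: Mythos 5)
Your proof follows essentially the same route as the paper's: extend the Falk--Yuzvinsky pencil $f_m=[g_1:g_2]$ from $U(\A)$ to $U(\A')$, recognize $\overline{f}_m\colon U(\A')\to U(\mathcal{P}_1)$ as a small orbifold pencil with a multiple fiber of multiplicity $m_H$ over $[0:1]$, and then apply Theorem~\ref{thm:v1a} together with formula~\eqref{eq:v1piorb} to produce the translated $1$-torus. You usefully make explicit two things the paper leaves implicit: the reduction to $k=3$ via Theorem~\ref{thm:pyy}(\ref{multi2}) (which is what forces the base to be $U(\mathcal{P}_1)$ and hence the component to be $1$-dimensional), and the local calculation $f=u^{m_H}\cdot(\text{unit})$ along $H$ giving the multiplicity. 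One caveat: in your discussion of the blow-up of $\XX$, the exceptional divisors $E_X$ are in fact \emph{horizontal} (they surject onto $\CP^1$ with degree $n_X$), not components of the fiber over $p_1$, so the phrase ``the fiber of the resolved pencil over $p_1$ has multiplicity $n_X$'' on $E_X$ is imprecise as stated; the paper instead invokes the divisibility of the degrees of $g_1|_H$ and $g_2|_H$ by $m_H$, which is the form in which the hypothesis $m_H\mid n_X$ is used in the cited source. You are right to flag this verification as the delicate step; the paper is equally terse here and defers the full argument to \cite{DeS12}.
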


\begin{proof}
Without loss of generality, we may assume that $H\in\A_1$.  
Consider the regular map given by \eqref{eq:FYpencil}, 
$f_m\colon M(\A)\to \CP^1$.   Since $f_H$ does not divide 
$g_2$, we may extend $f_m$ to a regular map 
$\overline{f}_m\colon M(\A')\to\CP^1$.  
By construction, $f_H \mid g_1$, and so 
$\im(\overline{f}_m) \setminus \im(f_m)=\set{[0\co 1]}$.  
Furthermore,  $\im(\overline{f}_m)$ equals 
$U(\mathcal{P}_1)=\CP^1\setminus \set{[1\co 0],[a_3\co b_3]}$. 
Passing to the projective complement, and taking 
the corestriction of $\overline{f}_m$ to its image 
yields an orbifold fibration, 
$\overline{f}_m\colon U(\A') \to U(\mathcal{P}_1)$. 

By hypothesis, the degrees of the restrictions 
of $g_1$ and $g_2$ to the hyperplane $H$
are both divisible by $m_H$.  Thus, the fiber of 
$\overline{f}_m$ over $[0\co 1]$ has multiplicity 
$m_H$, and so $\overline{f}_m$ 
is a small pencil. The desired conclusion now follows 
from Theorem \ref{thm:v1a}.
\end{proof}

\begin{example}
\label{ex:deleted B3} 
Let $\A$ be the ${\rm B}_3$ arrangement 
from Examples \ref{ex:B3} and \ref{ex:B3 bis}, and let 
$\A'$ be the arrangement obtained by deleting the hyperplane 
$z=0$, as in Example \ref{ex:db3}.  
As noted in \cite{S02}, the characteristic variety 
$\V_1(\A')\subset (\C^{*})^8$ has $7$ local components, 
corresponding to $6$ triple points and one quadruple point, 
$5$ components corresponding to braid sub-arrangements, 
and a component of the form $\rho T$, where 
$\rho=(1,1,-1,-1,-1,-1,1,1)$ and 
$T=\{(t^2,t^{-2},1,1,t^{-1},t^{-1},t,t) \mid t\in \C^{*}\}$.  

To explain where this translated torus comes from, 
let $m$ be the multinet on $\A$ depicted in Figure \ref{fig:b3 arr},  
and let $Q_m(\A)=g_1g_2g_3$ be the corresponding 
factorization of the defining polynomial of $(\A,m)$. 
We then obtain a  (large) pencil $f_m\colon U(\A)\to \CP^1 
\setminus \{ [1\co 0], [0\co 1], [1\co 1] \}$, given by
$[x,y,z]\mapsto [z^2(x^2-y^2):y^2(x^2-z^2)]$. 

Extending $f_m$ to the complement of $\A'$ 
by allowing $z=0$ yields an orbifold fibration,   
$f_m\colon U(\A')\to\P^1\setminus \{ [1\co 0], [1\co 1] \}$.  
Note that $f_m([x,y,0])=[0\co y^2x^2]$, 
and so the fiber over $[0\co 1]$ has multiplicity $2$; 
thus, $f_m$ is a small pencil. 
The orbifold fundamental group of the base 
of the pencil is $\Z*\Z_2$, and the pullback 
of $\VV_1(\Z*\Z_2)=\C^*\times \{-1\}$ along $f_m$  
yields the translated torus $\rho T \subset \VV_1(\A')$.
\end{example}

\begin{problem}
\label{prob:trans}
Do all positive-dimensional translated tori in the first 
characteristic variety of an arrangement arise in the 
manner described in Proposition \ref{prop:del}?
\end{problem} 

Finally, there are also arrangements $\A$ with 
isolated torsion points in the characteristic variety 
$\VV_1(\A)$. Here is an example, also from \cite{S02}.

\begin{example}
\label{ex:grunbaum}
Let $\A$ be the arrangement in $\C^3$ defined by the polynomial 
$Q(\A)=xyz(y-x)(y+x)(2y-z)(y-x-z)(y-x+z)(y+x+z)(y+x-z)$.  
Then $\VV_1(\A)$ has $10$ local components, corresponding to 
$7$ triple and $3$ quadruple points, $17$ components 
corresponding to braid sub-arrangements, $1$ component 
corresponding to a $\operatorname{B}_3$ sub-arrangement, 
$3$ translated components corresponding to deleted 
$\operatorname{B}_3$ sub-arrangements, and $2$ isolated 
points of order $6$.  
\end{example}

\begin{problem}
\label{prob:tors v1}
Find a concrete description of the $0$-dimensional components of the 
first characteristic variety $\VV_1(\A)$ of an arrangement $\A$. 
Are all such components determined by the intersection lattice $L(\A)$?
\end{problem} 

Answering in the affirmative Problems \ref{prob:trans} 
and \ref{prob:tors v1} would lead to a solution 
(at least for $q=s=1$) of the following well-known 
problem, which is central to the theory of hyperplane 
arrangements.

\begin{problem}
\label{prob:cv comb}
Given a hyperplane arrangement $\A$, are the characteristic varieties 
$\VV^q_s(M(\A),\C)$ determined by the intersection lattice $L(\A)$?
\end{problem} 

\section{The Milnor fibration of an arrangement. I.}
\label{sect:mf arr}

In this section, we introduce our main object of study:  
the Milnor fibration attached to a multi-arrangement.  
In the process, we describe several covering spaces 
related to the Milnor fiber.

\subsection{The Milnor fibration}
\label{subsec:mf}

Let $\A$ be an arrangement of hyperplanes in $\C^{d+1}$.   
Recall we associated to each multiplicity vector $m\in \N^{\abs{\A}}$ 
a homogeneous polynomial 
\begin{equation}
\label{eq:qam bis}
\qam=\prod_{H\in \A} f_H^{m_H}
\end{equation}
of degree $N=\sum_{H\in \A} m_H$. Note that $\qam$ is a 
proper power if and only if $\gcd(m)>1$, where 
$\gcd(m)=\gcd(m_H\colon H\in \A)$.

As before, let $M(\A)$ be the complement of the arrangement. 
The polynomial map $\qam\colon \C^{d+1} \to \C$ restricts 
to a map $\qam\colon M(\A) \to \C^{*}$.  As shown by 
J.~Milnor \cite{Mi} in a much more general context, $\qam$ 
is the projection map of a smooth, locally trivial bundle, 
known as the {\em (global) Milnor fibration}\/ of the 
multi-arrangement $(\A,m)$.  The typical fiber of this fibration, 
\begin{equation}
\label{eq:mfam}
\fma=\qam^{-1}(1)
\end{equation}
is called the {\em Milnor fiber}\/ of the multi-arrangement, 
while the Milnor fibration itself is written as 
\begin{equation}
\label{eq:mfib}
\xymatrixcolsep{30pt}
\xymatrix{
\fma\ar[r]  & M(\A) 
\ar^(.55){\qam}[r] & \C^*. 
}
\end{equation}

Clearly, the Milnor fiber is a smooth manifold of dimension $2d$. 
In fact, $\fma$ is a Stein domain of complex dimension $d$, 
and thus has the homotopy type of a finite CW-complex 
of dimension $d$. 

In the case when all the multiplicities 
$m_H$ are equal to $1$, the polynomial 
$Q(\A)=\qam$ is the usual defining polynomial for the 
arrangement, and has degree $n=\abs{\A}$.  Moreover, 
$F(\A)=\fma$ is the usual Milnor fiber of $\A$.

\begin{remark}
\label{rem:mdep}
Although the polynomials $\qam$ depend on the choice of 
multiplicities, they all have the same zero set; thus,  
they all share the same complement, namely, $M(\A)$.  On the 
other hand, the Milnor fibers $\fma$ do depend on the 
various choices of $m$, and not just on $\A$. The next example 
illustrates this point.
\end{remark}

\begin{example}
\label{ex:point}
Let $\A$ be the arrangement in $\C$ consisting of the single 
subspace $H=\set{0}$, and assign a multiplicity $m\in \N$ 
to that point. Then $\qam=z^m$, and so $\fma$ is the 
set of $m$-roots of unity. 
\end{example}

Of course, the Milnor fibers in the previous example 
are connected only when $m=1$.  For an arbitrary 
arrangement $\A$, the number of connected 
components of $\fma$ equals $\gcd(m)$.  

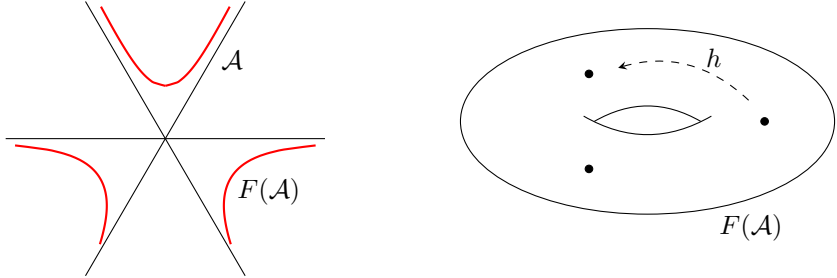
\begin{figure}
\[
\begin{tikzpicture}[baseline=(current bounding box.center),scale=0.7]  
\clip (0,0) circle (3);
\foreach \a in {0, 60,...,359}
      \draw(0, 0) -- (\a:8);
\draw[style=thick,color=red, scale=1,
domain=1:2.5,smooth,variable=\y] 
plot ({ 0.5*sqrt (\y^3-1)/sqrt(\y) },{\y});
\draw[style=thick,color=red, scale=1,
domain=1:2.5,smooth,variable=\y] 
plot ({ -0.5*sqrt (\y^3-1)/sqrt(\y) },{\y});
\draw[style=thick,color=red, scale=1.6,
domain=-1.25:-0.08,smooth,variable=\y] 
plot ({ 0.5*sqrt (1-\y^3)/sqrt(-\y) },{\y});
\draw[style=thick,color=red, scale=1.6,
domain=-1.25:-0.08,smooth,variable=\y] 
plot ({ -0.5*sqrt (1-\y^3)/sqrt(-\y) },{\y});
 \node at (0.7,1.5) [label=right:$\A$] {};
 \node at (1,-1) [label=right:$F(\A)$] {};
\end{tikzpicture}
\hspace*{0.7in}
\begin{tikzpicture}[baseline=(current bounding box.center),scale=0.7]   
        \draw (-1,0) to[bend left] (1,0);
        \draw (-1.2,.1) to[bend right] (1.2,.1);
        \draw[rotate=0] (0,0) ellipse (100pt and 50pt); 
        \node[circle,draw,inner sep=1pt,fill=black] at (2.2,0)  {};
        \node[circle,draw,inner sep=1pt,fill=black] at (-1.1,0.9)  {};
        \node[circle,draw,inner sep=1pt,fill=black] at (-1.1,-0.9)  {};
         \draw[->,>=stealth,shorten >=1pt,dashed] (1.9, 0.4) to[bend right] (-0.6,1);
         \node at (0.75,1.2) [label=right:$h$] {};
         \node at (1,-2) [label=right:$F(\A)$] {};
\end{tikzpicture}
\]
\caption{Milnor fiber and monodromy for $Q(\A)=y(y^2-4x^2)$} 
\label{fig:mf pencil} 
\end{figure}

\subsection{The geometric monodromy}
\label{subsec:mf mono}

Consider the Milnor fibration $M(\A)\to \C^*$ defined by the polynomial 
$Q=Q_m(\A)$.  For each  $\theta\in [0,1]$, let $F_{\theta}$ be the 
fiber over the point $e^{2\pi \ii \theta}\in \C^*$, so that $F_0=F_1=F_m(\A)$. 

For each $z\in M(\A)$, the path $\gamma_{\theta,z}\colon [0,1] \to \C^*$, 
$t\mapsto e^{2\pi \ii t \theta}$ lifts to a path 
$\tilde\gamma_{\theta,z}\colon [0,1] \to M(\A)$, 
$t\mapsto e^{2\pi \ii t \theta/N} z$, satisfying $\tilde\gamma_{\theta,z}(0)=z$. 
Notice that $Q(\tilde\gamma_{\theta,z}(1))= e^{2\pi \ii  \theta} Q(z)$;  
thus, if $z\in F_{0}$, then $\tilde\gamma_{\theta,z}(1)\in F_{\theta}$. 

By definition, the {\em monodromy}\/ of the Milnor fibration 
is the diffeomorphism $h\colon F_0\to F_1$ given by 
$h(z)=\tilde\gamma_{1,z}(1)$.  In view of the above 
discussion, this diffeomorphism can be written as 
\begin{equation}
\label{eq:hfam}
h\colon \fma\to \fma, \quad 
z \mapsto e^{2\pi \ii/N} z.
\end{equation} 

Clearly, the map $h$ has order $N$.  Furthermore, note 
that the complement $M(\A)$ is homotopy equivalent 
to the mapping torus of $h$:
\begin{equation}
\label{eq:map torus}
M(\A) \simeq \fma \times [0,1]/(z,0) \sim  (h(z),1).
\end{equation}

\begin{example} 
\label{ex:mf boolean}
Let $\B_n$ be the Boolean arrangement in $\C^{n}$, and 
identify the complement $M(\B_n)$ with the algebraic torus 
$(\C^*)^n$.  Given a multiplicity vector $m$, the  
map $Q_m(\B_n)\colon (\C^*)^n \to \C^*$, 
$z\mapsto z_1^{m_1}\cdots z_{n}^{m_n}$ is 
a morphism of algebraic groups. Hence, the 
Milnor fiber 
\begin{equation}
\label{eq:fbnm}
F_m(\B_n)= \ker (Q_m(\B_n))
\end{equation}
is an algebraic subgroup, realized as 
the disjoint union of $\gcd(m)$ copies of $(\C^*)^{n-1}$.  
The monodromy automorphism,  
$h\colon F_m(\B_n)\to F_m(\B_n)$, permutes those 
copies in a circular fashion. 

If $\gcd(m)=1$, the algebraic subgroup $F_m(\B_n)$ is, 
in fact, an algebraic torus isomorphic to $(\C^*)^{n-1}$.   
Moreover, the monodromy automorphism is isotopic 
to the identity, through the isotopy 
$h_t(z) = e^{2\pi \ii t/N} z$.  Thus, the bundle 
$F_m(\B_n) \to M(\B_n) \to \C^*$ is trivial in this case.
\end{example}  

\subsection{Comparing two Milnor fibrations}
\label{subsec:mf boolean}

The previous example is, in some sense, a classifying object 
for Milnor fibrations of arrangements.  

To make this statement more precise, let $\A=\{H_1,\dots ,H_n\}$ 
be an (ordered) essential arrangement of hyperplanes 
in $\C^{d+1}$, with defining linear forms $f_1,\dots , f_n$. 
Recall we defined in \S\ref{subsec:abelcov} a linear 
map $\iota(\A)\colon \C^{d+1}\to\C^n$  by 
$\iota(\A)(z)=(f_1(z), \dots ,f_n(z))$.  Restricting this map 
to the complement yields an embedding of  
$M(\A)$ into  $M(\B_n)=(\C^*)^n$.  

Since $\A$ and $\B_n$ have the same number of hyperplanes, 
a multiplicity vector for one of them yields a multiplicity 
vector for the other.   The next lemma shows that the 
corresponding Milnor fibrations are compatible. 

\begin{lemma}
\label{lem:iota mab}
For each $m\in \N^n$, the map $\iota(\A)\colon M(\A)\inj M(\B_n)$ 
is compatible with the Milnor fibrations $\qam\colon M(\A)\to \C^*$ 
and $Q_m(\B_n)\colon M(\B_n)\to \C^*$.  
\end{lemma}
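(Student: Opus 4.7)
The plan is essentially a direct unwinding of definitions: compatibility of the two Milnor fibrations under $\iota(\A)$ amounts to checking that the square
\[
\xymatrix{
M(\A) \ar@{^{(}->}^(.45){\iota(\A)}[r] \ar_{\qam}[d] & M(\B_n) \ar^{Q_m(\B_n)}[d] \\
\C^* \ar@{=}[r] & \C^*
}
\]
commutes, so that $\iota(\A)$ carries $\qam$-fibers into $Q_m(\B_n)$-fibers in a fiberwise manner.

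First, I would recall from the construction in \S\ref{subsec:abelcov} that $\iota(\A)(z)=(f_1(z),\dots,f_n(z))$, where $f_i$ is a chosen linear form with $\ker(f_i)=H_i$. Under the identification $M(\B_n)=(\C^*)^n$, the defining polynomial of the multi-Boolean arrangement is just the monomial $Q_m(\B_n)(w)=w_1^{m_1}\cdots w_n^{m_n}$. Plugging in $w=\iota(\A)(z)$ and comparing with \eqref{eq:qam bis}, one immediately obtains
\[
Q_m(\B_n)\bigl(\iota(\A)(z)\bigr)=\prod_{i=1}^n f_i(z)^{m_i}=\qam(z).
\]

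From this single identity the lemma follows: $\iota(\A)$ intertwines the projection maps of the two fibrations, and hence restricts for every $c\in\C^*$ to an inclusion $\qam^{-1}(c)\hookrightarrow Q_m(\B_n)^{-1}(c)$. In particular, taking $c=1$ gives an embedding $\fma\hookrightarrow F_m(\B_n)$ of Milnor fibers, and the monodromy diffeomorphism \eqref{eq:hfam} on $\fma$ is the restriction of its analogue on $F_m(\B_n)$, since both are given by multiplication by $e^{2\pi\ii/N}$ in the ambient coordinates (with the same $N=\sum m_i$).

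There is really no obstacle here: the content of the statement is that the defining polynomial $\qam$ factors tautologically as $Q_m(\B_n)\circ\iota(\A)$, which is immediate from \eqref{eq:qam bis} and the definition of $\iota(\A)$. The only thing worth emphasizing in the write-up is that $\iota(\A)$ takes values in $(\C^*)^n$ precisely on $M(\A)$ (since $z\in H_i$ iff $f_i(z)=0$), so the restriction of $\iota(\A)$ to $M(\A)$ lands in $M(\B_n)$ and the diagram above makes sense.
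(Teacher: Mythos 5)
Your proof is correct and matches the paper's argument: both rest on the tautological factorization $\qam = Q_m(\B_n)\circ\iota(\A)$, from which the commutativity of the square and the induced inclusion of Milnor fibers follow immediately. The extra remarks you add (landing in $(\C^*)^n$, compatibility of monodromies) are harmless elaborations of the same idea.
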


\begin{proof}
Write $\qam=\prod_{i=1}^n f_i^{m_i}$ and 
$Q_m(\B_n)=\prod_{i=1}^n w_i^{m_i}$.  Clearly, then,  
the following equality holds:       
\begin{equation}
\label{eq:qmbn}
\qam=Q_m(\B_n)\circ \iota(\A).
\end{equation}

Thus, the map $\iota(\A)$ restricts to 
an inclusion $\iota_m(\A)\colon \fma\inj F_m(\B_n)$,  
which fits into the commuting diagram 
\begin{equation}
\xymatrixcolsep{36pt}
\xymatrixrowsep{26pt}
\label{eq:iota milnor}
\xymatrix{
\fma \ar[r]  \ar^{\iota_m(\A)}[d] 
& M(\A)  \ar^(.55){\qam}[r] \ar^{\iota(\A)}[d] 
& \C^*  \ar@{=}[d] \\
F_m(\B_n) \ar[r] 
& M(\B_n) \ar^(.55){Q_m(\B_n)}[r]& \C^*.
}
\end{equation}
In other words, $\iota(\A)$ is a bundle map, as claimed.
\end{proof}

As a consequence of this   
lemma, we may expresses the Milnor fiber of $(\A,m)$ 
as the intersection of two familiar objects.  

\begin{corollary}
\label{cor:fam bnm}
The Milnor fiber $\fma$ is obtained by intersecting the 
complement $M(\A)$, viewed as a subvariety of the algebraic 
torus $M(\B_n)=(\C^*)^n$ via the inclusion $\iota(\A)$, 
with $F_m(\B_n)\cong \coprod_{\gcd(m)} (\C^*)^{n-1}$, 
viewed as an algebraic subgroup of $(\C^*)^n$: 
\[
\fma = M(\A) \cap F_m(\B_n).
\]
\end{corollary}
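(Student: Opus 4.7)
The statement is essentially a direct translation of Lemma \ref{lem:iota mab} into set-theoretic language, so the plan is to unwind the definitions carefully rather than to introduce new ideas.

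First, I would write down the defining equation \eqref{eq:qmbn} from the proof of Lemma \ref{lem:iota mab}, namely $\qam = Q_m(\B_n)\circ \iota(\A)$. This is the only algebraic input needed: it just says that the factor $f_i^{m_i}$ in $\qam$ is the pullback along $\iota(\A)$ of the coordinate factor $w_i^{m_i}$ defining $Q_m(\B_n)$.

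Second, I would chase an element through. Starting from $\fma = \qam^{-1}(1) \subseteq M(\A)$, a point $z \in M(\A)$ lies in $\fma$ if and only if $Q_m(\B_n)(\iota(\A)(z)) = 1$, i.e., if and only if $\iota(\A)(z) \in F_m(\B_n)$. Since $\iota(\A)$ is injective and embeds $M(\A)$ as a subvariety of $(\C^*)^n = M(\B_n)$, identifying $M(\A)$ with its image $\iota(\A)(M(\A))$ lets me rewrite this as $z \in M(\A) \cap F_m(\B_n)$.

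There is no genuine obstacle: everything follows by rewriting once the commutative diagram \eqref{eq:iota milnor} is in place. The only thing worth being careful about is that the identification is as subsets of $(\C^*)^n$ via the embedding $\iota(\A)$ (not as abstract sets), and that both $M(\A)$ and $F_m(\B_n)$ are viewed inside the ambient torus $M(\B_n)$; this is already emphasized in the statement, so the proof just needs to state that the embedding $\iota_m(\A)$ constructed in \eqref{eq:iota milnor} is a bijection onto this intersection. I would close by noting that the decomposition $F_m(\B_n) \cong \coprod_{\gcd(m)} (\C^*)^{n-1}$ was established in Example \ref{ex:mf boolean}, so no additional argument is needed there.
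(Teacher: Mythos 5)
Your proposal is correct and matches the paper's approach: the paper presents this corollary as an immediate consequence of Lemma~\ref{lem:iota mab}, relying precisely on the identity $\qam = Q_m(\B_n)\circ \iota(\A)$ and the fact that $\iota(\A)$ embeds $M(\A)$ into $(\C^*)^n$. Your element chase simply makes explicit the unwinding the paper leaves to the reader.
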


\subsection{Induced homomorphisms on $\pi_1$}
\label{subsec:induced}

Using the above comparison between the two Milnor 
fibrations, we identify now the homomorphism induced 
on fundamental groups by the projection map 
$Q_m(\A)\colon M(\A)\to \C^*$. As usual, denote by 
$\set{x_H \mid H\in \A}$ the standard generating 
set for $\pi_1(M(\A))$.

\begin{prop}
\label{prop:qm pi1}
The induced homomorphism 
$Q_m(\A)_{\sharp} \colon \pi_1(M(\A))\to \pi_1(\C^*)=\Z$  
is given by  $x_H \mapsto m_H$. 
\end{prop}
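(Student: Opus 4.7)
The plan is to reduce the computation to the Boolean case via the bundle map of Lemma \ref{lem:iota mab}, where the relevant homomorphism is essentially given by the exponent vector of a monomial.

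Since $\pi_1(\C^*,1)=\Z$ is abelian, the homomorphism $Q_m(\A)_{\sharp}$ factors through the Hurewicz map $\pi_1(M(\A))\surj H_1(M(\A),\Z)=\Z^n$, so it suffices to compute the induced map on $H_1$ and check that $x_H\mapsto m_H$ in the basis of meridians. Invoking the factorization \eqref{eq:qmbn}, that is, $\qam=Q_m(\B_n)\circ \iota(\A)$, I would then split the problem into two easy pieces: $\iota(\A)_{*}\colon H_1(M(\A),\Z)\to H_1(M(\B_n),\Z)$, followed by $Q_m(\B_n)_{*}\colon \Z^n\to \Z$.

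For the first piece, Lemma \ref{lem:H1ofA} identifies $\iota(\A)$ as a classifying map for the universal abelian cover of $M(\A)$, and the proof of that lemma shows explicitly that, in the meridian bases, the induced map $\iota(\A)_*\colon \Z^n\to \Z^n$ is the identity; in particular each meridian $x_H$ is sent to the corresponding standard generator $w_H$ of $H_1(M(\B_n),\Z)$. For the second piece, the map $Q_m(\B_n)\colon (\C^*)^n\to\C^*$, $z\mapsto z_1^{m_1}\cdots z_n^{m_n}$, is a homomorphism of algebraic (hence Lie) groups, and thus its effect on $\pi_1$ is simply the corresponding $\Z$-linear map $(k_1,\dots,k_n)\mapsto \sum_i m_ik_i$, sending $w_H$ to $m_H$. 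Composing the two gives $Q_m(\A)_{\sharp}(x_H)=m_H$, as required.

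There is no real obstacle: once the diagram \eqref{eq:iota milnor} is in place and Lemma \ref{lem:H1ofA} is invoked, the argument is a one-line diagram chase together with the trivial observation that a monomial map between tori acts on fundamental groups by its exponent vector. The only point to take care of is ensuring that the meridian basis on the $M(\A)$ side corresponds under $\iota(\A)_*$ to the standard basis on the $(\C^*)^n$ side, which is precisely the content of Lemma \ref{lem:H1ofA}.
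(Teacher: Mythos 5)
Your proposal is correct and follows essentially the same route as the paper: factor $\qam$ as $Q_m(\B_n)\circ\iota(\A)$ via \eqref{eq:qmbn}, invoke Lemma \ref{lem:H1ofA} to identify $\iota(\A)_\sharp$ with abelianization (equivalently, the identity on $\Z^n$ in the meridian basis), and observe that the monomial map $Q_m(\B_n)$ acts on $\pi_1$ by the exponent vector $m$. The only cosmetic difference is that you make explicit the (true but tacit) point that $Q_m(\A)_\sharp$ factors through $H_1$ because the target is abelian.
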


\begin{proof}
From \eqref{eq:qmbn}, we know that $\qam=Q_m(\B_n)\circ \iota(\A)$. 
By Lemma \ref{lem:H1ofA}, the homomorphism 
$\iota(\A)_{\sharp}\colon \pi_1(M(\A))\to \pi_1((\C^*)^n)$ may be 
identified with the abelianization map, $\ab\colon \pi_1(M(\A))\to \Z^n$. 

On the other hand, the homomorphism induced in $\pi_1$ by the 
algebraic morphism $Q_m(\B_n)\colon (\C^*)^n \to \C^*$, 
$w\mapsto w_1^{m_1}\cdots w_n^{m_n}$ may be identified with 
the linear map $m\colon \Z^n \to \Z$, $v\mapsto \sum_{i=1}^n m_iv_i$. 
The conclusion follows.
\end{proof}

Using this proposition, we can derive in a 
novel way a well-known lower-bound on the first Betti 
number of the Milnor fiber of a (multi-) arrangement. 

\begin{corollary}
\label{corollary:qpi}
Suppose $\gcd(m)=1$. Then $b_1(\fma)\ge n-1$, where $n=\abs{\A}$. 
\end{corollary}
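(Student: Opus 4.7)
The approach is to combine Proposition \ref{prop:qm pi1} with the homotopy long exact sequence of the Milnor fibration.

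First I would note that the hypothesis $\gcd(m)=1$ forces $\fma$ to be connected, as recorded in \S\ref{subsec:mf}. Since the Milnor fibration $\fma \to M(\A) \xrightarrow{\qam} \C^*$ is a smooth fiber bundle with connected fiber, and since $\pi_2(\C^*)=0$, the homotopy long exact sequence collapses to a short exact sequence
\[
1 \to \pi_1(\fma) \to \pi_1(M(\A)) \xrightarrow{\qam_\sharp} \Z \to 1,
\]
where the surjectivity on the right follows from Proposition \ref{prop:qm pi1} together with the hypothesis $\gcd(m_H \mid H\in \A)=1$.

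Next I would abelianize this extension. For any exact sequence $1 \to N \to G \to \Z \to 1$, the image of $N$ under $G \surj G^{\ab}$ coincides with the kernel of the induced map $G^{\ab}\surj \Z$, as one verifies directly (or reads off from the five-term exact sequence, using $H_2(\Z)=0$). Under the identification $H_1(M(\A),\Z)\cong \Z^n$, Proposition \ref{prop:qm pi1} says that the abelianization of $\qam_\sharp$ is the linear functional $(a_H)\mapsto \sum_{H\in\A} m_H a_H$, whose kernel is a free abelian group of rank $n-1$. Consequently, $H_1(\fma,\Z)$ surjects onto $\Z^{n-1}$, and therefore $b_1(\fma)\ge n-1$.

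No serious obstacle is anticipated here: the corollary drops out almost immediately from the identification $\qam_\sharp(x_H) = m_H$ furnished by Proposition \ref{prop:qm pi1}. The only inputs beyond that proposition are the elementary topological fact that a fiber bundle over $\C^*\simeq S^1$ with connected fiber induces a short exact sequence of fundamental groups, and the connectivity of $\fma$ when $\gcd(m)=1$; everything else is formal manipulation of the extension.
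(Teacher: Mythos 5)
Your proof is correct. The paper reaches the same conclusion by applying $\pi_1$ to its commuting diagram \eqref{eq:iota milnor}, which compares the Milnor fibration of $(\A,m)$ to that of the Boolean arrangement $(\B_n,m)$; the resulting ladder \eqref{eq:ladder} of short exact sequences, combined with Lemma \ref{lem:H1ofA}, gives an epimorphism $\pi_1(\fma)\surj \pi_1(F_m(\B_n))=\Z^{n-1}$ by a diagram chase. You instead take only the single extension $1\to\pi_1(\fma)\to\pi_1(M(\A))\to\Z\to 1$ and abelianize it via the five-term exact sequence (using $H_2(\Z)=0$), identifying the image of $H_1(\fma,\Z)$ in $H_1(M(\A),\Z)\cong\Z^n$ with $\ker(m\colon\Z^n\to\Z)\cong\Z^{n-1}$, which is a free summand since $\gcd(m)=1$. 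Both routes hinge on Proposition \ref{prop:qm pi1} and the connectedness of the fiber; yours is a bit more self-contained for this specific corollary, since it does not require the Boolean comparison apparatus set up in \S\ref{subsec:mf boolean}, whereas the paper gets the added (but here unused) information that the surjection $\pi_1(\fma)\surj\Z^{n-1}$ is realized by the inclusion $\iota_m(\A)$ into the torus.
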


\begin{proof}
From the assumption, we know that the Milnor fibers  
$\fma$ and $F_m(\B_n)$ are connected.  Applying 
the $\pi_1$ functor to diagram \eqref{eq:iota milnor}, 
and using the homotopy long exact sequences associated to 
the respective Milnor fibrations, as well as Lemma \ref{lem:H1ofA} 
and Proposition \ref{prop:qm pi1}, we obtain the following 
commuting diagram with exact rows:
\begin{equation}
\label{eq:ladder}
\xymatrixcolsep{18pt}
\xymatrixrowsep{12pt}
\xymatrix{
1\ar[r]&\pi_1(\fma)\ar[rr] \ar@{->>}^{\iota_m(\A)_{\sharp}}[dd] 
\ar@{..>>}_(.4){}@/_28pt/[ddd] 
&& \pi_1(M(\A)) \ar@{->>}^{\iota(\A)_{\sharp}}[dd]
\ar^(.62){\qam_{\sharp}}[rr] \ar@{..>>}_(.38){\ab}@/_28pt/[ddd] 
&& \Z  \ar[r]  \ar@{=}[dd]  & 1 \,\phantom{.}
\\  \\
1\ar[r]&\pi_1(F(\B_n))\ar[rr]  \ar@{=}[d] && \pi_1(M(\B_n))   
\ar^(.62){Q_m(\B_n)_{\sharp}}[rr]   \ar@{=}[d]  && \Z  \ar[r] & 1 \, . \\ 
& \Z^{n-1} && \Z^n \ar@{->>}_{m}[urr]
}
\end{equation}

This diagram yields an epimorphism $\pi_1(\fma) \surj \Z^{n-1}$,  
indicated by a dotted arrow in \eqref{eq:ladder}.  This completes the proof.
\end{proof}

We will come back to this topic in \S\ref{sect:mf bis}, 
where we will use a different approach  
to provide improved estimates on the Betti numbers of 
the Milnor fiber.

\subsection{The Milnor fiber as an infinite cyclic cover}
\label{subsec:zcov}

A standard construction turns any continuous 
map into a fibration, at least up to homotopy, 
see e.g.~\cite[\S 4.3]{Ha}.    In the case of  
an inclusion $F\inj E$, the construction is particularly 
simple: let $Y$ be the space of paths in $E$ 
starting in $F$.  Then the inclusion 
$F\inj  Y$, $a\mapsto \const_{a}$ is a homotopy 
equivalence, while the map $g\colon Y \to E$, 
$\gamma\mapsto \gamma(1)$ is a fibration.  

Now suppose $F \to E \xrightarrow{p} B$ is a fibration, 
and all three spaces are path-connected CW-complexes.  
Pick a basepoint $e_0\in E$, and let $Z:=g^{-1}(e_0)$ be 
the homotopy fiber of the inclusion $F\inj E$. 
Let $\Omega B$ be the space of loops in $B$, based at 
$b_0=p(e_0)$.  Then $\Omega B$ has the homotopy 
type of a CW-complex, and we have a map 
$f\colon \Omega B \to Z$, which sends a loop at $b_0$  
to its lift at $e_0$, traversed in the opposite direction. 
Comparing the long exact sequences of homotopy 
groups of the two fibrations, $F \to E \to B$ and 
$Z\to Y\to E$, we conclude that $f$ is a 
homotopy equivalence.

In our situation, consider the Milnor fibration \eqref{eq:mfam}
associated to $(\A,m)$.  The homotopy fiber of the 
inclusion $\fma \to M(\A)$, then, is homotopic to 
$\Omega \C^* \simeq \Z$. 
 
\begin{prop}
\label{prop:mf cyclic} 
The homotopy fibration defined by the inclusion 
$\fma \inj M(\A)$ is equivalent to the regular, infinite 
cyclic cover classified by the homomorphism 
$\pi_1(M(\A))\surj \Z$, $x_H \mapsto m_H$.
\end{prop}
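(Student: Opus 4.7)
The plan is to convert the Milnor fibration into a principal covering via the standard homotopy-fiber construction, and then read off its classifying map from Proposition \ref{prop:qm pi1}.

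First, I would recall the general fact reviewed just before the statement: for a fibration $F \to E \xrightarrow{p} B$ of connected CW-complexes, the homotopy fiber of the inclusion $F \hookrightarrow E$ is homotopy equivalent to $\Omega B$, and the resulting fibration
\[
\Omega B \longrightarrow Y \longrightarrow E
\]
is (up to homotopy) the principal $\Omega B$-bundle classified, on $\pi_1$, by $p_{\sharp}\colon \pi_1(E)\to\pi_1(B)$. I would apply this with $F=\fma$, $E=M(\A)$, $B=\C^*$, and $p=\qam$, giving a homotopy fibration
\[
\Omega \C^* \longrightarrow Y \longrightarrow M(\A)
\]
whose total space $Y$ is homotopy equivalent to $\fma$.

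Next I would observe that $\C^*$ is a $K(\Z,1)$, so $\Omega \C^*$ is weakly equivalent to the discrete group $\Z$. Hence $Y\to M(\A)$ is a fibration with discrete fiber $\Z$, i.e.\ a regular infinite cyclic covering of $M(\A)$. The classifying homomorphism of this covering is precisely the boundary map $\pi_1(M(\A))\to \pi_0(\Omega \C^*)=\Z$ in the long exact sequence of the fibration, which coincides with $\qam_{\sharp}$. By Proposition \ref{prop:qm pi1}, this is exactly the homomorphism $x_H\mapsto m_H$.

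The only mild subtlety, and what I would present most carefully, is the identification of the homotopy type of $Y$ with $\fma$ (rather than with some other total space): this uses that the fiber inclusion $\fma\hookrightarrow Y$, $z\mapsto \const_z$, is a homotopy equivalence because $\qam$ is an honest fibration, so the actual fiber and the homotopy fiber agree canonically up to homotopy. Once this is in place, combining it with the two displays above gives a commuting diagram of fibrations up to homotopy identifying $\fma\hookrightarrow M(\A)$ with the infinite cyclic cover classified by $x_H\mapsto m_H$, completing the proof.
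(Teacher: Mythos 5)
Your proposal is correct and follows essentially the same strategy as the paper: run the homotopy-fiber construction from \S\ref{subsec:zcov} on the Milnor fibration to see that the homotopy fiber of $\fma\hookrightarrow M(\A)$ is $\Omega\C^*\simeq\Z$, then read off the classifying homomorphism and invoke Proposition~\ref{prop:qm pi1}. The one place you pass over quickly, and which the paper spells out, is the step from ``a fibration whose fiber is weakly equivalent to the discrete group $\Z$'' to ``an honest regular $\Z$-cover'': the paper makes this precise by pulling back the exponential cover $\exp\colon\C\to\C^*$ along $Q_m$ to get an actual $\Z$-cover $E\to M(\A)$ classified by $(Q_m)_{\sharp}$, and then exhibiting a map $\phi\colon E\to\fma$ with $j\circ\phi\simeq p$ identifying this cover with the homotopy fibration. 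That pullback diagram is exactly the bookkeeping your phrase ``up to homotopy'' is hiding, so you should include it (or an equivalent argument) to make the equivalence of fibrations an honest one rather than an assertion.
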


\begin{proof}
The exponential map $\exp\colon \C\to \C^*$,  
$\zeta\mapsto e^{2\pi \ii \zeta}$ is a regular $\Z$-cover.  
Pulling back this cover along the map 
$Q=\qam\colon M(\A)\to \C^*$, we obtain the diagram
\begin{equation}
\label{eq:zcov}
\xymatrix{
& & \Z \ar[r] \ar[d] & \Z \ar[d] \\
& F \ar^{\simeq}[r] \ar^{\simeq}[d] & 
E \ar^{q}[r] \ar^{p}[d] \ar@{..>>}_{\phi}[dl] & \C \ar^{\exp}[d] \\
\Z \ar[r]  & \fma \ar^{j}[r]  & M(\A)  \ar^(.55){Q}[r]
& \C^* 
}
\end{equation}
where $F$ is the homotopy fiber of $q$, 
the homotopy equivalence $F \to \fma$ is the restriction 
of $p$ to (homotopy) fibers, and $\Omega \C^*\simeq \Z$ 
is the homotopy fiber of $j$. 

By construction, the pullback cover, $E\to M(\A)$, is 
a regular $\Z$-cover, classified by the homomorphism 
$Q_{\sharp}\colon \pi_1(M(\A))\to \Z$.  Furthermore, 
the homotopy equivalence 
$\phi\colon E\xrightarrow{\,\simeq\,}\fma$ defined 
by diagram \eqref{eq:zcov} 
satisfies $j\circ \phi \simeq p$. Thus, the cover $E\to M(\A)$ 
may be identified with the homotopy fibration associated to 
the inclusion $\fma \to M(\A)$.  The desired conclusion now 
follows from Proposition \ref{prop:qm pi1}.
\end{proof}

\begin{remark}
\label{rem:df}
In general, an infinite cyclic cover of a finite CW-complex 
need not have finite Betti numbers, let alone have the homotopy 
type of a finite CW-complex. For instance, if $X=S^1\vee S^1$ 
is a wedge of two circles, and $Y\to X$ is an arbitrary $\Z$-cover, 
then $b_1(Y)=\infty$.  In our situation, though, the geometry 
of the situation conspires to insure that all the $\Z$-covers 
corresponding to Milnor fibrations of arrangements have 
finite cell decompositions.  For more on the geometric 
and homological finiteness properties of (infinite) abelian 
covers, we refer to \cite{Su-pisa, Su12, SYZ-annali}.
\end{remark}

\subsection{The Milnor fiber as a finite cyclic cover}
\label{subsec:mf cover}

As before, fix a multiplicity vector $m$ on $\A$ 
with $\gcd(m)=1$, and set $N=\sum_{H\in \A} m_H$.  
The monodromy automorphism $h\colon \fma\to \fma$, 
given by $h(z)=e^{2\pi \ii/N} z$,  generates a cyclic group $\Z_N$.    
As noted in \cite{OR93}, this group acts freely on $\fma$, 
and the quotient space may be identified with the projective  
complement, $U(\A)$. We  thus have a regular, $N$-fold 
cyclic cover, $\fma \to U(\A)$.   As shown in \cite{CS95, CDS03}, 
this cover can be described in purely group-theoretic terms.  

We give here a self-contained, full-detail proof of this 
basic result.  The proof uses a somewhat different 
approach, which draws in part from \cite{Su-conm,DeS12}.  
To start with, recall from \S\ref{subsec:ma} that the Hopf fibration 
$\pi \colon \C^{d+1} \setminus \set{0} \to \CP^{d}$ 
restricts to a (trivializable) bundle map, 
$\pi(\A)\colon M(\A)\to U(\A)$, with fiber $\C^*$.  

\begin{theorem}
\label{thm:mf} 
The map $\pi(\A)\colon M(\A)\to U(\A)$ restricts 
to a regular, $\Z_N$-cover $\pi_m(\A) \colon \fma\to U(\A)$. 
Furthermore, this cover is classified by the homomorphism 
\[
\delta_m\colon \pi_1(U(\A))\surj \Z_N, \quad 
\overline{x}_H \mapsto m_H \bmod N.
\]
\end{theorem}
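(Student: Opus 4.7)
The plan is to first exhibit $\pi_m(\A)\co \fma\to U(\A)$ as a regular $\Z_N$-cover by exploiting the action of the $N$-th roots of unity on the Milnor fiber, and then identify the classifying homomorphism via a pullback diagram relating it to the $N$-fold covering $\C^*\to\C^*$, $\zeta\mapsto\zeta^N$.

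For the cover structure, the key is that $Q=\qam$ is homogeneous of degree $N$, so $Q(\zeta z)=\zeta^N Q(z)$ for all $\zeta\in\C^*$. Thus, within the free $\C^*$-action on $M(\A)$, the subgroup $\mu_N\cong\Z_N$ of $N$-th roots of unity is precisely the stabilizer of the level set $\fma=Q^{-1}(1)$, and it acts freely on $\fma$ by restriction. The orbit map $z\mapsto [z]$ factors through $\fma/\mu_N$, and the induced map $\fma/\mu_N\to U(\A)$ is bijective: surjectivity follows by multiplying any representative $z\in M(\A)$ of a given $[z]\in U(\A)$ by an $N$-th root of $1/Q(z)$, and injectivity follows since $z_2=\lambda z_1$ with $z_i\in\fma$ forces $\lambda^N=1$. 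Because $\pi(\A)$ is a locally trivial $\C^*$-bundle, this continuous bijection is also open, hence a homeomorphism, and $\pi_m(\A)$ is a regular $\Z_N$-cover.

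Next I would identify the classifying homomorphism via functoriality of pullbacks. Pulling back $\pi_m(\A)$ along the Hopf projection $\pi(\A)\co M(\A)\to U(\A)$ yields the $\Z_N$-cover
\[
P=\{(z,\lambda)\in M(\A)\times\C^*\mid \lambda^N\,Q(z)=1\}\longrightarrow M(\A),
\]
which is (up to the harmless involution $\lambda\leftrightarrow\lambda^{-1}$) the pullback of the canonical $N$-fold cover $p_N\co\C^*\to\C^*$ along $Q\co M(\A)\to\C^*$. Since $p_N$ is classified by the reduction $\pi_1(\C^*)=\Z\surj\Z_N$, Proposition~\ref{prop:qm pi1} tells me that $P\to M(\A)$ is classified by the composition $\pi_1(M(\A))\xrightarrow{Q_\sharp}\Z\surj\Z_N$, $x_H\mapsto m_H\bmod N$. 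Functoriality of classifying maps then gives $\delta_m\circ\pi(\A)_\sharp = Q_\sharp\bmod N$, and since the Hopf projection sends each meridian $x_H\in\pi_1(M(\A))$ to its image $\overline{x}_H\in\pi_1(U(\A))$, we conclude that $\delta_m(\overline{x}_H)=m_H\bmod N$.

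The main obstacle is ensuring that the identification of the deck group $\mu_N$ with the abstract $\Z_N$ produces the asserted sign, since \emph{a priori} $\delta_m$ is determined only up to an automorphism of $\Z_N$. I would pin this down by a direct path-lifting computation: fix $z_0\in\fma$, take a positively-oriented meridian $x_H$ in $M(\A)$ based at $z_0$, and lift the projected loop $\overline{x}_H$ to $\fma$ in the form $t\mapsto\lambda(t)\,x_H(t)$ with $\lambda(0)=1$ and $\lambda(t)^N Q(x_H(t))=1$. The winding number of $Q\circ x_H$ around $0$ equals $m_H$ by construction, which pins down $\lambda(1)\in\mu_N$ and matches the deck-group shift with $m_H\bmod N$ under the natural convention that $1\in\Z_N$ corresponds to the geometric monodromy $h(z)=e^{2\pi\ii/N}z$ of \eqref{eq:hfam}.
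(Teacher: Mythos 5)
Your proposal is correct and follows essentially the same approach as the paper: both establish the cover structure via homogeneity of $Q_m$ and the free $\mu_N$-action on the fiber $F_m(\A)$, and both identify $\delta_m$ by pulling back along the Hopf map $\pi(\A)$ and comparing with the pullback of the $N$-fold cover $w\mapsto w^N$ along $Q_m$, the only cosmetic difference being your parametrization $P=\{(z,\lambda)\mid \lambda^N Q(z)=1\}$ versus the paper's $Y_m=\{(z,w)\mid Q(z)=w^N\}$, which are related by $w=\lambda^{-1}$. Your closing remark on pinning down the isomorphism $\mu_N\cong\Z_N$ by path-lifting is a reasonable extra precaution that the paper leaves implicit.
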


\begin{proof} 
For simplicity, we will drop the arrangement $\A$ from the 
notation.  By homogeneity of the polynomial $Q_m$, 
we have that $Q_m(wz)=w^N Q_m(z)$, for every $z\in M$ 
and $w\in \C^{*}$. Thus, the restriction of $Q_m$ to a fiber of 
$\pi$ may be identified with the covering projection 
$q\colon \C^{*}\to \C^{*}$, $q(w)=w^N$.  

Now, if both $z$ and $wz$ belong to $F_m$, then 
$Q_m(z)=Q_m(wz)=1$, and so $w^N=1$. Thus, the 
restriction $\pi_m=\left. \pi \right|_{F_m} \colon F_m\to U$ 
is the orbit map of the free action of the geometric 
monodromy on $F_m$.   
Hence, $\pi_m$ is a regular $\Z_N$-cover, as claimed.

To determine the classifying homomorphism $\delta_m \colon 
\pi_1(U)\to \Z_N$ for this cover, we will make some auxiliary 
constructions. We refer to  diagram \eqref{eq:hopf milnor} 
for the various spaces and maps introduced along the way. 
\begin{equation}
\label{eq:hopf milnor}
\xymatrixcolsep{36pt}
\xymatrixrowsep{30pt}
\xymatrix{
Y_m \ar^{p_2}[r] \ar^{\psi}[d] \ar^{p_1}[dr] 
& \C^* \ar[d] \ar^{q}[dr] \\
F_m\ar[r] \ar_(.5){\pi_m}[dr] 
& M \ar^{\pi}[d] \ar^{Q_m}[r]  & \C^*\, .\\
& U}
\end{equation}

Let $Y_m=\set{(z,w)\in M\times \C^{*}\mid Q_m(z)=w^N}$, 
and denote by $p_1\colon Y_m\to M$ and $p_2\colon Y_m\to \C^{*}$ 
the restrictions of the coordinate projections.   Clearly, 
\begin{equation}
\label{eq:qmp1}
Q_m\circ p_1=q\circ p_2.
\end{equation}

Thus, $p_1$ is a regular $\Z_N$-cover, obtained as the 
pullback of $q$ along $Q_m$. Of course, the cover $q$ is 
classified by the canonical projection $\pi_1(\C^*)=\Z\to \Z_N$.  
Hence, by Proposition \ref{prop:qm pi1}, the cover $p_1$ is 
classified by the homomorphism 
\begin{equation}
\label{eq:del1}
\pi_1(M)\to \Z_N, \quad  
x_H\mapsto m_H \bmod N.
\end{equation}

We now proceed to the other side of 
diagram \eqref{eq:hopf milnor}. 
Given an element $(z,w)\in Y_m$, note that 
$Q_m(w^{-1}z)=w^{-N}Q_m(z)=1$.  Hence, we may define a map 
$\psi\colon Y_m\to F_m$ by $\psi(z,w)=w^{-1}z$.  Plainly, 
\begin{equation}
\label{eq:qmpsi1}
\pi_m\circ \psi = \pi\circ p_1.
\end{equation}
 
Therefore, the pullback of 
$\pi_m$ along $\pi$ coincides with the cover $p_1$.
Now, by Lemma \ref{lem:H1ofA}, the homomorphism 
$\pi_{\sharp} \colon \pi_1(M)\to \pi_1U)$ 
is given by $x_H \mapsto \overline{x}_H$.   
Thus, the cover $p_1$ is 
classified by the homomorphism 
\begin{equation}
\label{eq:del2}
\pi_1(M)\to \Z_N, \quad 
x_H\mapsto \delta_m(\overline{x}_H).
\end{equation}

Comparing the answers obtained in \eqref{eq:del1} and 
\eqref{eq:del2}, we find that 
$\delta_m(\overline{x}_H)=m_H \bmod N$, 
and we are done.
\end{proof}

\section{The Milnor fibration of an arrangement. II.}
\label{sect:mf bis}

We continue our discussion of the Milnor fibration with 
a study of its homology groups and monodromy operators. 

\subsection{Homology of the Milnor fiber}
\label{subsec:mf homology}
Using the interpretation of  the Milnor fiber of a multi-arrangement 
as a finite cyclic cover of the projectivized complement, we may 
compute the homology groups of the Milnor fiber and the 
characteristic polynomial of the algebraic monodromy in each 
degree, at least over a field of characteristic not dividing the 
order of the cover.

\begin{theorem}
\label{thm:hmf}
Let $(\A,m)$ be a multi-arrangement. Set 
$N=\sum_{H\in \A} m_H$, and let $\delta_m\colon 
\pi_1(U(\A))\to \Z_N$ be the homomorphism given by 
$x_H\mapsto m_H \bmod N$. If $\k$ is a coefficient field of 
characteristic not dividing $N$, then
\begin{enumerate}
\item 
The homology groups of the Milnor fiber $\fma$ are given by
\begin{equation}
\label{eq:eko bis}
\dim_\k H_q(\fma,\k)=\sum_{s\ge 1}\abs{\V^q_s (U(\A),\k)\cap 
\im(\widehat{\delta_{m}})}.
\end{equation}
\item The characteristic polynomial of the algebraic 
monodromy of the Milnor fibration, 
$h_*\colon H_q(\fma,\k) \to H_q(\fma,\k)$, is given by
\begin{equation}
\label{eq:charpoly bis}
\Delta^{\k}_{h,q}(t) =  \prod_{s\ge 1} 
\prod_{\substack{\zeta\in \k^* \colon \zeta^N=1, \\
\zeta^m \in  \V^q_s(U(\A),\k)}}  (t-\zeta).
\end{equation}
\end{enumerate}
\end{theorem}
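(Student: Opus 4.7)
Both statements will be derived from the identification of $\fma\to U(\A)$ as a regular cyclic cover obtained in Theorem \ref{thm:mf}, combined with the general machinery for the homology of finite regular abelian covers sketched in Appendix \ref{sect:covers}.

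By Theorem \ref{thm:mf}, the map $\pi_m(\A)\colon\fma\to U(\A)$ is a regular $\Z_N$-cover classified by $\delta_m$, and the geometric monodromy $h$ from \eqref{eq:hfam} generates the deck transformation group. Fix a primitive $N$-th root of unity $\xi\in\k^*$, which is available since $\ch(\k)\nmid N$; this gives an embedding $\Z_N\inj\k^*$, $k\mapsto\xi^k$, and identifies the character group of $\Z_N$ with $\mu_N(\k)=\{\zeta\in\k^*\mid \zeta^N=1\}$. The standard eigenspace decomposition for finite regular abelian covers then produces, for each $q$, an $h$-equivariant isomorphism
\begin{equation*}
H_q(\fma,\k)\;\cong\;\bigoplus_{\zeta\in\mu_N(\k)} H_q(U(\A),\k_{\rho_\zeta\circ\delta_m}),
\end{equation*}
where $\rho_\zeta\colon\Z_N\to\k^*$ is the character $k\mapsto\zeta^k$, and $h$ acts on the $\zeta$-summand by multiplication by $\zeta$.

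Next I would identify the pulled-back characters explicitly. The homomorphism $\rho_\zeta\circ\delta_m$ sends the meridian $\overline{x}_H$ to $\zeta^{m_H}$; as $\zeta$ ranges over $\mu_N(\k)$, the resulting family $\zeta^m:=(\zeta^{m_H})_{H\in\A}$ sweeps out precisely $\im(\widehat{\delta_m})\subset\T_\k(\A)$. By the very definition of the characteristic varieties reviewed in \S\ref{subsec:cv arr}, one has $\dim_\k H_q(U(\A),\k_\rho)=\#\{s\ge 1\mid\rho\in\V^q_s(U(\A),\k)\}$ for any character $\rho$. Summing this over $\zeta\in\mu_N(\k)$ and swapping the order of summation yields \eqref{eq:eko bis}. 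For the second statement, the same decomposition shows that the characteristic polynomial of $h_*$ on $H_q(\fma,\k)$ factors as $\prod_{\zeta\in\mu_N(\k)}(t-\zeta)^{e_\zeta}$, where $e_\zeta=\dim_\k H_q(U(\A),\k_{\zeta^m})$; rewriting $e_\zeta$ in terms of the $\V^q_s$ and reorganizing the product gives \eqref{eq:charpoly bis}.

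The main obstacle I anticipate is bookkeeping the conventions rather than any substantive difficulty: one must verify that the geometric monodromy $h\colon z\mapsto e^{2\pi\ii/N}z$ is indeed the canonical generator of the deck group under the identification of Theorem \ref{thm:mf} (and not, say, its inverse), so that its action on the $\rho_\zeta$-isotypic component is multiplication by $\zeta$ itself rather than by some other power. This amounts to tracking the compatibility between the pullback diagram \eqref{eq:hopf milnor} and the standard classifying map for the $N$-fold cover $q\colon\C^*\to\C^*$, $w\mapsto w^N$. Once this is settled, both formulas follow directly from the eigenspace decomposition.
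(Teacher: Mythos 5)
Your proposal is correct and takes essentially the same route as the paper: Theorem~\ref{thm:mf} identifies $\fma\to U(\A)$ as the regular $\Z_N$-cover classified by $\delta_m$, and then the eigenspace decomposition of $H_q(\fma,\k)\cong H_q(U(\A),\k[\Z_N])$ from Appendix~\ref{sect:covers} (Theorems~\ref{thm:betti cover} and~\ref{thm:alg mono}) gives both formulas at once. The paper's proof is a two-line citation of precisely these results; your write-up spells out the isotypic decomposition and the convention check on the geometric monodromy, which the paper leaves implicit but which is indeed the one point worth verifying.
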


\begin{proof}
By Theorem \ref{thm:mf}, the restriction of the Hopf fibration 
to the Milnor fiber, $\pi_m(\A)\colon F_m(\A)\to U(\A)$, 
is a regular, $\Z_N$-cover classified by the homomorphism 
$\delta_m$.  The desired conclusions follow from 
Theorem \ref{thm:betti cover}.
\end{proof}

\begin{remark}
\label{rem:1evalue}
The $1$-eigenspace of the linear transformation $h_*$ acting 
on the $\k$-vector space $H_q(\fma,\k)$ is isomorphic to $H_q(U(\A),\k)$, 
which has dimension $b_q(U(\A))$, independent of $\k$. 
In particular, $h_*$ is the identity on $H_q(\fma,\k)$  if and 
only if $\Delta^{\k}_{h,q}(t)=(t-1)^{b_q(U(\A))}$.  The other 
eigenvalues of $h_*$, and the dimensions of their eigenspaces, 
depend in principle on both the multiplicity vector $m$ and the 
characteristic of $\k$.
\end{remark}

\begin{remark}
\label{rem:primitive}
In the case when all multiplicities $m_H$ are equal to $1$, 
we can say more: in the product  \eqref{eq:charpoly bis}, 
only non-primitive roots of $1$ appear, provided $\k=\C$ 
and $q<d$, see \cite[Corollary 2.2]{CS95}.  For more 
refined versions of this result, we refer to 
\cite[Proposition 2.1]{Li02b} and \cite[Theorem 3.13]{MP09}. 
\end{remark}

\begin{example}
\label{ex:mf pencil}
Let $\mathcal{P}_n$ be the pencil of $n+1$ lines through the 
origin of $\C^2$ from Examples  \ref{ex:pencil} and \ref{ex:cv free}. 
We know that the Milnor fiber $F_m(\mathcal{P}_n)$ is an $N$-fold 
cover of $U(\mathcal{P}_n)= \C\setminus \{\text{$n$ points}\}$, 
where $N$ is the sum of the multiplicities.  We also know 
that $F_m(\mathcal{P}_n)$ is a Riemann surface with 
$N$ punctures.  A standard Euler characteristic argument now 
shows that the genus of this surface is $g=\frac{N(n-2)}{2}+1$; thus,  
\begin{equation}
\label{eq:b1pn}
\dim_{\k} H_1 (F_m(\mathcal{P}_n),\k)=
N(n-1)+1,
\end{equation}
for all fields $\k$.  

Alternatively, we know from \eqref{eq:cv free} 
that $\VV^1_1(U(\mathcal{P}_n),\k)=\cdots = 
\VV^1_{n-1}(U(\mathcal{P}_n),\k)=(\k^*)^n$ and 
$\VV^1_n(U(\mathcal{P}_n),\k)= \{1\}$.  Thus, 
we may recover the calculation from \eqref{eq:b1pn} 
by means of formula \eqref{eq:eko bis}, at least in the case 
when $\ch(\k)\nmid N$. 
\end{example}

In view of Theorem \ref{thm:v1a}, the above 
result yields a rather explicit formula for 
the first Betti number of the Milnor fiber $F(\A)$, 
and for the characteristic polynomial $\Delta=\Delta^{\C}_{h,1}$ 
of its algebraic monodromy, 
$h_*\colon H_1(F(\A),\C)\to  H_1(F(\A),\C)$. 
Let $\Phi_r$ be the $r$-th cyclotomic polynomial, 
and let  $\varphi(r)$ be its degree.

\begin{corollary}
\label{cor:b1 mf}
Let $\A$ be an arrangement of $n$ hyperplanes, and 
let $F(\A)$ be its Milnor fiber. Then,
\begin{equation}
\label{eq:charpoly}
\Delta(t) = (t-1)^{n-1} \cdot \prod_{1\ne r | n} \Phi_r(t)^{\depth(\delta^{n/r})}, 
\end{equation}
where $\delta\colon \pi_1(U(\A))\to \C^{*}$ is the ``diagonal" 
character, given by $\delta(x_H) = e^{2\pi\ii/n}$, for all $H\in \A$, 
and $\depth(\rho):=\max\{s \mid \rho\in \VV^1_s(U(\A))\}$. 
In particular, 
\begin{equation}
\label{eq:betti1 mf}
b_1(F(\A))= n-1 + \sum_{1\ne r | n} \varphi(r) 
\depth \big(\delta^{n/r}\big), 
\end{equation}
\end{corollary}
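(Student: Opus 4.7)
The plan is to specialize Theorem \ref{thm:hmf} to the ``reduced'' case in which all multiplicities equal $1$. Under that specialization, $N=n$, the Milnor fiber is $F(\A)$, and the character $\delta_m$ of Theorem \ref{thm:mf} becomes exactly the diagonal character $\delta$ described in the statement. I would then simply reorganize the product formula \eqref{eq:charpoly bis} by grouping the $n$-th roots of unity according to their exact order.

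Concretely, first I would observe that, because the characteristic varieties form a descending chain $\V^1_1 \supseteq \V^1_2 \supseteq \cdots$, the double product \eqref{eq:charpoly bis} in degree $q=1$ telescopes into a single product
\[
\Delta(t) \;=\; \prod_{\zeta^n=1}\,(t-\zeta)^{\depth(\delta_\zeta)},
\]
where $\delta_\zeta\colon \pi_1(U(\A))\to\C^*$ is the diagonal character $\overline{x}_H\mapsto\zeta$, so that $\delta_\zeta=\delta^k$ whenever $\zeta=e^{2\pi\ii k/n}$. Next I would group the $n$-th roots of unity by their exact order $r$, a divisor of $n$: the primitive $r$-th roots are indexed by $k=(n/r)j$ with $\gcd(j,r)=1$, so the corresponding characters are the Galois conjugates of $\delta^{n/r}$, and $\prod_{\mathrm{ord}(\zeta)=r}(t-\zeta)=\Phi_r(t)$.

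The key step, and the main technical point of the argument, is to show that the function $\zeta\mapsto\depth(\delta_\zeta)$ depends only on the order of $\zeta$. For this I would invoke the fact that the characteristic varieties $\V^1_s(U(\A),\C)$ are defined over $\Q$: they are the vanishing loci of the Fitting ideals of the Alexander matrix derived from a finite presentation of $\pi_1(U(\A))$ with integer relations (equivalently, from the Orlik--Solomon presentation combined with the van Kampen algorithm of \S\ref{subsec:pi1arr}). Hence $\V^1_s(U(\A),\C)$ is stable under $\Gal(\Q(\zeta_n)/\Q)$, which acts transitively on the set of primitive $r$-th roots of unity; consequently all characters $\delta^{(n/r)j}$ with $\gcd(j,r)=1$ lie in the same characteristic strata, and
\[
\prod_{\mathrm{ord}(\zeta)=r}(t-\zeta)^{\depth(\delta_\zeta)} \;=\; \Phi_r(t)^{\depth(\delta^{n/r})}.
\]

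Finally I would isolate the $r=1$ contribution: here $\delta^{n/1}=\delta^n$ is the trivial character $\bo$, whose depth equals $b_1(U(\A))=n-1$ by \eqref{eq:h1u} (since $\bo\in\V^1_s(U(\A),\C)$ precisely when $s\le b_1$). Together with $\Phi_1(t)=t-1$, this produces the factor $(t-1)^{n-1}$ and yields \eqref{eq:charpoly}. The Betti number formula \eqref{eq:betti1 mf} then follows at once by taking degrees, using $\deg\Phi_r=\varphi(r)$ and the fact that $\dim_\C H_1(F(\A),\C)=\deg\Delta$. The only real obstacle is justifying Galois-invariance of depth cleanly; everything else is bookkeeping on the output of Theorem \ref{thm:hmf}.
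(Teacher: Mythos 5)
Your proposal is correct and is precisely the derivation the paper leaves implicit: specialize Theorem \ref{thm:hmf} to $m=(1,\dots,1)$, telescope the product over $s$ using the nesting $\V^1_1\supseteq\V^1_2\supseteq\cdots$, and group the $n$-th roots of unity by their order. You correctly pinpoint the one non-obvious input, namely the Galois-invariance of $\zeta\mapsto\depth(\delta_\zeta)$, which you ground in the integrality of the Alexander matrix from \S\ref{subsec:fox}; and since that Fitting-ideal description of $\V^1_s$ is only asserted away from the trivial character, it matters that you isolate $\zeta=1$ and treat it directly via $\depth(\bo)=b_1(U(\A))=n-1$, which you do.
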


\begin{remark}
\label{rem:trivial}
In formula \eqref{eq:betti1 mf}, only the essential components of 
$\VV_1(\A)$ may contribute to the sum.  Indeed, if a component 
lies on a (proper) coordinate subtorus $C\subset \mathbb{T}(\A)$, 
then the diagonal subtorus, $D=\{(t,\dots ,t) \mid t\in \C^*\}$, 
intersects $C$ only at the origin. In particular, local components, 
or, more generally, components arising from multinets supported 
on proper sub-arrangements of $\A$, do {\em not}\/ produce 
jumps in the first Betti number of $F(\A)$.  

On the other hand, if $(\A,m)$ is a multi-arrangement 
with some $m_H>1$, then even the non-essential 
components in the characteristic varieties of $U(\A)$ may produce 
jumps in the Betti numbers of $F_m(\A)$, according 
to formula \eqref{eq:eko bis}.
\end{remark}

\begin{remark}
\label{rem:cdo}
In \cite[Theorem 13]{CDO03}, Cohen, Dimca, and Orlik 
give some nice combinatorial upper bounds on the exponents 
of the cyclotomic polynomials appearing in formula 
\eqref{eq:charpoly}.  For more on this, we refer to \cite[\S 6.4]{Di04}.
\end{remark}

\subsection{Discussion an examples}
\label{subsec:discuss hmf}

We now illustrate with a few examples the range of 
applicability of Theorem \ref{thm:hmf} (see \cite{CS95} 
for more computations of this sort.)

\begin{example}
\label{ex:mf braid arr}
Let $\A$ be the braid arrangement from Examples \ref{ex:braid arr}, 
\ref{ex:braid}, and \ref{ex:cv braid}. 
We know that $\VV_1(\A)$ has a single essential 
component, namely, the subtorus 
$T=\{ t\in (\C^*)^6 \mid t_1 t_3 t_6=t_1 t_2^{-1}=
t_3 t_4^{-1}=t_5t_6^{-1}=1 \}$. Clearly, $\delta^2\in T$, yet 
$\delta\notin T$; hence, $\Delta(t)=(t-1)^5(t^2+t+1)$.  
\end{example}

\begin{example}
\label{ex:cv mf b3}
Let $\A$ be the reflection arrangement of type $\operatorname{B}_3$ 
from Examples \ref{ex:B3} and \ref{ex:B3 bis}. We know 
that $\VV_1(\A)$ has an essential component, corresponding 
to the $(3,4)$-multinet depicted in Figure \ref{fig:b3 arr}.  
It is readily verified that this is the only such component, and 
that the diagonal subtorus  intersects this component only 
at the origin.  Hence, $\Delta(t)=(t-1)^8$. 
\end{example}

\begin{example}
\label{ex:ceva bis}
Let $\A$ be the Ceva($3$) arrangement from Example \ref{ex:ceva3}. 
The $(3,3)$-net depicted in Figure \ref{fig:ceva3} defines a rational map, 
$\CP^2 \dashrightarrow \CP^1$, $(x,y,z) \mapsto (x^3-y^3,y^3-z^3)$,  
which restricts to a pencil 
$U(\A)\to \CP^1 \setminus \{ (1,0), (0,1), (1,-1) \}$.  
Let $T$ be the essential $2$-dimensional component 
of $\VV_1(\A)$ obtained by pullback along this pencil. 
It is readily verified 
that the subgroup generated by the diagonal character $\delta$ 
intersects $\VV_2(\A)$ in two points, both lying on $T$, and 
both of order $3$.  Hence, $\Delta(t)=(1-t)^8(1+t+t^2)^2$.  
\end{example}

As the next example shows, the first Betti number 
of the Milnor fiber of an arrangement in $\C^3$ 
does not only depend on the number and type of 
multiple points of $\P(\A)$, but also on their relative position. 

\begin{example} 
\label{ex:pappus}
Consider the arrangements $\A_1$ and 
$\A_2$ defined by the polynomials 
\begin{align*}
Q(\A_1)&=xyz(x-y)(y-z)(x-y-z)(2x+y+z)(2x+y-z)(2x-5y+z),\\
Q(\A_2)&=xyz(x+y)(y+z)(x+3z)(x+2y+z)(x+2y+3z)(2x+3y+3z).
\end{align*}
The arrangement $\A_1$ is a realization of the Pappus 
configuration $(9_3)_1$, while $\A_2$ is a realization 
of the configuration $(9_3)_2$. Each projective configurations 
has $9$ double points and $9$ triple points; additionally, 
the first configuration supports a $(3,3)$-net, but the 
second one supports no essential multinet.  Applying 
formula \eqref{eq:charpoly}, we find that $\Delta_1(t) =
(t-1)^8(t^2+t+1)$ and $\Delta_2(t)=(t-1)^8$, as in \cite{CS95}. 
\end{example}

Such examples, and many others from the literature 
(see for instance \cite{MP09, BDS, Li12, Di11, Yo} for some 
recent progress in this direction) raise the following 
well-known question, dating from the 1980s. 

\begin{problem}
\label{quest:mf comb}
Given an arrangement $\A$, are the Betti numbers of the 
Milnor fiber $F(\A)$ and the characteristic polynomial 
of the algebraic monodromy determined by the intersection 
lattice $L(\A)$?
\end{problem}

\subsection{Torsion in the homology of the Milnor fiber}
\label{subsec:tors mf}

A long-standing question, raised by Randell and Dimca--N\'emethi 
among others, asks whether the Milnor fiber of an arrangement 
can have non-trivial torsion in homology.  Examples from 
\cite{CDS03} first showed that $H_1(F_m(\A),\Z)$ may have torsion, 
for suitable multi-arrangements $(\A,m)$.  In \cite{DeS12}, these 
examples were recast in a more general framework, leading to 
examples of high-dimensional hyperplane arrangements $\A$ 
for which $H_q(F(\A),\Z)$ has torsion, for some $q>1$. Let us 
sketch here this circle of ideas. 

\begin{theorem}[\cite{DeS12}]
\label{thm:milnor multi}
Suppose $\A$ admits a pointed multinet, with distinguished 
hyperplane $H$ and multiplicity $m$.  Let $p$ be a prime 
dividing $m_H$. There is then a choice of multiplicities $m'$ 
on the deletion $\A' =\A\setminus \{H\}$ such that 
$H_1(F_{m'}(\A'),\Z)$ has non-zero $p$-torsion.
\end{theorem}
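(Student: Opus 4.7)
The plan is to combine Proposition \ref{prop:del} with the finite cyclic cover structure of Theorem \ref{thm:mf} and detect $p$-torsion in the integral homology of the Milnor fibre via a comparison of Betti numbers across characteristics.

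First, I apply Proposition \ref{prop:del} to the pointed multinet data on $\A$ to obtain an irreducible component $C = \rho T \subset \V_1(\A')$, where $T$ is a $1$-dimensional subtorus of $\mathbb{T}(\A')$ and $\rho$ is a character of order exactly $m_H$. Geometrically, $C$ is the pullback of the character variety of $\pi_1^{\orb}(U(\mathcal{P}_1),(m_H,\infty)) \cong \Z * \Z_{m_H}$ along the small orbifold pencil $\overline{f}_m \colon U(\A') \to U(\mathcal{P}_1)$ produced in the proof of that proposition, and $\rho$ is realised by the generator of the $\Z_{m_H}$ factor. Since $p \mid m_H$, the cyclic group $\langle \rho \rangle$ contains a character of order exactly $p$.

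Second, I choose multiplicities $m'$ on $\A'$ so that the classifying homomorphism $\delta_{m'} \colon \pi_1(U(\A')) \surj \Z_{N'}$ of Theorem \ref{thm:mf} factors through the orbifold-group surjection $\pi_1(U(\A')) \surj \Z*\Z_{m_H}$ induced by the small pencil, and so that $N' = \sum_{H' \in \A'} m'_{H'}$ is divisible by $p$. A natural candidate is to inherit the multiplicities from the parts of the pointed multinet on $\A$ after deletion of $H$, possibly rescaled so that $N'$ becomes a $p$-th power multiple of $m_H$. With this choice, the induced embedding $\widehat{\delta_{m'}}\colon \widehat{\Z_{N'}} \inj \mathbb{T}(\A')$ meets $C$ at controlled torsion characters whose orders are divisible by $p$.

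Third, I carry out a Betti-number comparison across characteristics. Using Theorem \ref{thm:hmf} I compute $\dim_\C H_1(F_{m'}(\A'),\C)$ in terms of $|C \cap \im(\widehat{\delta_{m'}})|$ (plus the local contributions, which are insensitive to the characteristic). I then analyse $\dim_{\overline{\F_p}} H_1(F_{m'}(\A'), \overline{\F_p})$ by tracking the degeneration of $C$ modulo $p$: because $\overline{\F_p}^*$ has no elements of order $p$, the mod-$p$ characteristic variety $\V^1_s(U(\A'),\overline{\F_p})$ can acquire new positive-dimensional components arising from the reduction of the translating character $\rho$. Showing a strict inequality $\dim_{\overline{\F_p}} H_1(F_{m'}(\A'), \overline{\F_p}) > \dim_\Q H_1(F_{m'}(\A'), \Q)$ then forces non-trivial $p$-torsion in $H_1(F_{m'}(\A'),\Z)$ by the universal coefficient theorem.

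The main obstacle is controlling $\V^1_s(U(\A'),\overline{\F_p})$ near the reduction of $C$ to produce that strict inequality: naive point-counting in $\im(\widehat{\delta_{m'}})$ over $\overline{\F_p}$ actually \emph{decreases}, so the $p$-torsion contribution must come from genuinely new components of the modular characteristic variety, or from a Frobenius-twisted enlargement of $C$. The cleanest route, which I would pursue, is to factor the $N'$-fold cyclic cover $F_{m'}(\A') \to U(\A')$ through the cover adapted to the small orbifold pencil, and to apply the machinery of Appendix \ref{sect:covers} on $p$-torsion in homology of abelian covers, which isolates the contribution coming from the $\Z_{m_H}$ orbifold factor and makes explicit the mechanism by which divisibility of $m_H$ by $p$ produces $p$-torsion in $H_1$ of the deleted Milnor fibre.
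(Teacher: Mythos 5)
Your Step 1 matches the paper: Proposition \ref{prop:del} supplies a translated-torus component $\rho T \subset \VV_1(\A')$ with $\rho$ of order $m_H$, and the pointed-multinet hypothesis guarantees $p \mid m_H$. From there, however, your plan diverges from the paper's in a way that creates a genuine gap.

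Your Step 2 proposes to choose $m'$ with $N' = \sum_{K\in\A'} m'_K$ \emph{divisible} by $p$. This is incompatible with the machinery you invoke in Step 3. Theorem \ref{thm:hmf} --- and more fundamentally Theorem \ref{thm:betti cover} and Proposition \ref{prop:jumpcov} in Appendix \ref{sect:covers} --- all carry the standing hypothesis that the characteristic of the coefficient field does \emph{not} divide the order of the covering group. If $p \mid N'$, none of that machinery computes $\dim_{\overline{\F_p}} H_1(F_{m'}(\A'),\overline{\F_p})$, so your ``Betti-number comparison across characteristics'' has no tool to pin down the $\overline{\F_p}$ side. You flag this yourself in the last paragraph, but gesturing at ``$p$-torsion machinery in Appendix \ref{sect:covers}'' does not close the gap: that appendix contains no such machinery (all its results exclude $\ch(\k) \mid \abs{A}$). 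Moreover, even the $\C$-side heuristic in Step 2 is pointing the wrong way: arranging for the classifying character $\widehat{\delta_{m'}}$ to hit $\rho T$ makes $b_1$ over $\Q$ \emph{larger}, which works against the strict inequality $\dim_{\overline{\F_p}} > \dim_{\Q}$ you need.

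The paper's proof takes essentially the opposite tack on $N'$ and introduces a step you omit entirely: a transfer argument. Concretely: (i) from the translated component $\rho T$, \cite[Theorem~6.1]{DeS12} produces, for all sufficiently large $r > 1$ with $p \nmid r$, a regular $r$-fold cyclic cover $Y \to U(\A')$ with $p$-torsion in $H_1(Y,\Z)$; (ii) \cite[Proposition~6.7]{DeS12} shows any such finite cyclic cover is dominated by a Milnor-fiber cover $F_{m'}(\A') \to U(\A')$ for some $m'$ with $p \nmid N'$, so that $F_{m'}(\A') \to Y$ is a cover of degree $N'/r$ coprime to $p$; (iii) the transfer map $H_1(Y,\Z) \to H_1(F_{m'}(\A'),\Z)$, composed with the covering-induced map, is multiplication by $N'/r$, hence injective on $p$-torsion, and the claim follows. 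The intermediate cover $Y$ and the coprimality $p \nmid N'$ are exactly what make the transfer step go; your choice $p \mid N'$ would destroy it. To repair your proposal you would need to (a) reverse the divisibility condition on $N'$, (b) insert an intermediate cyclic cover where the $p$-torsion is actually detected (this is where a Betti-number comparison in the spirit of your Step 3 genuinely lives, but for a cover of degree $r$ with $p \nmid r$), and (c) finish with a transfer argument rather than a direct computation of $H_1(F_{m'}(\A'),\overline{\F_p})$.
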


\begin{proof} 
(Sketch) 
By Proposition \ref{prop:del}, the variety $\VV_1(\A')$ has a 
component of the form $\rho T$, where $T$ is a $1$-dimensional 
subtorus, and $\rho$ is a torsion character, of order 
divisible by $p$.  Using this fact, and some further machinery, 
it is shown in \cite[Theorem 6.1]{DeS12}, that, 
for all sufficiently large integers $r>1$ not divisible by $p$, 
there exists a regular, $r$-fold cyclic cover $Y\to U(\A')$ 
such that $H_1(Y,\Z)$ has $p$-torsion.  

On the other hand, 
it is also shown in  \cite[Proposition 6.7]{DeS12} that any finite 
cyclic cover of an arrangement complement is dominated by 
a Milnor fiber corresponding to a suitable choice of multiplicities. 
Hence, there exists a choice of multiplicities $m'$ for which the 
cover $\pi_{m'}(\A')\colon F_{m'}(\A')\to U(\A')$ factors through $Y$, 
and $p$ does not divide $N' :=\sum_{H\in\A'}m'_H$.  
A standard transfer argument then shows that $H_1(F_{m'}(\A'),\Z)$ 
also has $p$-torsion.
\end{proof}

\begin{example}
\label{ex:mf deleted B3}
Let $\A'$ be the deletion of the $\operatorname{B}_3$ 
arrangement from Example \ref{ex:deleted B3}.  
Proceeding as above, let $r=3$ and consider the 
$3$-fold cover $Y\to U(\A')$ classified by the 
vector $\chi=(2,1,0,0,2,2,1,1)\in (\Z_3)^8$.  
Choose $m'=(2,1,3,3,2,2,1,1)$ as a positive 
integer lift of $\chi$. Then $N'=15$, and $F_{m'}(\A')$ 
factors through $Y$; thus, there is $2$-torsion in the 
first homology of $F_{m'}(\A')$.   
Explicit calculation shows that, in fact, 
$H_1(F_{m'}(\A'),\Z)=\Z^7\oplus \Z_2\oplus \Z_2 $ 
and $\Delta_1^{\overline{\F_2}}(t)=(t-1)^7(t^2+t+1)$.
\end{example}

This  leads to a couple of natural questions (see 
also \cite{CDS03, DeS12}). 

\begin{problem}
\label{prob:tors mf}
Is there a hyperplane arrangement $\A$ (without multiplicities) 
such that $H_1(F(\A),\Z)$ has non-trivial torsion?
\end{problem}

\begin{problem}
\label{prob:tors comb}
Given a hyperplane arrangement $\A$, is the torsion in  
$H_*(F(\A),\Z)$ determined by the intersection lattice $L(\A)$?
\end{problem}  

\subsection{A polarization construction}
\label{subsec:polar}

Given a multi-arrangement $(\A,m)$, we define in \cite{DeS12} 
a new hyperplane arrangement, $\B=\A\| m$, 
called the {\em polarization}\/ of the multi-arrangement.  
The rank of $\B$ equals $\rank \A+\abs{\set{H\in\A\colon m_H\geq 2}}$, 
while the number of hyperplanes in $\B$ equals $N=\sum_{H\in\A}m_H$.  

A crucial point of this construction is the connection 
between the Milnor fiber of the (simple) arrangement $\B$ 
and the Milnor fiber of the multi-arrangement $(\A,m)$:  
the pullback of the cover $F(\B)\to U(\B)$ 
along the canonical inclusion $U(\A) \to U(\B) $ 
is equivalent to the cover $F_m(\A)\to U(\A)$. 
Using this fact, together with Theorem \ref{thm:milnor multi}, 
the following result is proved in \cite{DeS12}.

\begin{theorem}[\cite{DeS12}]
\label{thm:polar tors}
Suppose $\A$ admits a pointed multinet, with distinguished 
hyperplane $H$ and multiplicity $m$.  Let $p$ be a prime 
dividing $m_H$. 
There is then a choice of multiplicities $m'$ on the deletion 
$\A' =\A\setminus \{H\}$ such that $H_q(F(\B),\Z)$ has 
$p$-torsion, where $\B=\A' \| m'$ and 
$q=1+\abs{\set{K\in \A':  m'_K\ge 3}}$.
\end{theorem}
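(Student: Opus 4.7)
The plan is to combine Theorem~\ref{thm:milnor multi} with the compatibility property of the polarization construction recalled in \S\ref{subsec:polar}, and transport the resulting $H_1$-torsion up to degree $q$ via a K\"unneth-type argument built from the auxiliary variables introduced by polarization.

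First, I would invoke Theorem~\ref{thm:milnor multi} applied to the pointed multinet on $\A$ with distinguished hyperplane $H$ and prime $p\mid m_H$. This produces multiplicities $m'$ on the deletion $\A'=\A\setminus\{H\}$ for which $H_1(F_{m'}(\A'),\Z)$ contains a nonzero $p$-torsion class $\tau$. Set $\B=\A'\|m'$ and $s=|\{K\in\A':m'_K\ge 3\}|$, so $q=1+s$; the goal is to produce a $p$-torsion class in $H_q(F(\B),\Z)$.

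Second, I would use the defining property of the polarization: the cyclic cover $F(\B)\to U(\B)$ pulls back along the canonical inclusion $\iota\colon U(\A')\hookrightarrow U(\B)$ to $F_{m'}(\A')\to U(\A')$. The inclusion $\iota$ sits as the zero-section of a locally trivial bundle $U(\B)\to U(\A')$ obtained by forgetting the auxiliary coordinates $y_K$ introduced by polarization; the typical fiber $\Phi$ is the product $\prod_{K\colon m'_K\ge 2}(\C\setminus\{m'_K-1\text{ points}\})\simeq\prod_K\bigvee^{m'_K-1}S^1$. Pulling back the $N$-fold cover along this bundle presents $F(\B)$ as a fibration over $U(\A')$ with fiber an $N$-fold cyclic cover $\widetilde\Phi$ of $\Phi$, inside which $F_{m'}(\A')$ sits (over the zero-section) as a discrete subspace. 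The classifying character of $\widetilde\Phi\to\Phi$ sends every meridian to $1\in\Z_N$, so the cover factors through a K\"unneth-type splitting: each factor with $m'_K=2$ is $\C^*$, whose $N$-fold cyclic cover is again homotopy-equivalent to a circle and contributes no degree-shift, while each factor with $m'_K\ge 3$ is a bouquet of $m'_K-1\ge 2$ circles whose cyclic cover has $H_1$ of rank at least $2$. K\"unneth then gives, after iterating over the $s$ factors with $m'_K\ge 3$, an integral class in $H_s(\widetilde\Phi,\Z)$ whose external product with $\tau$ lands in $H_q$ of the total space $F(\B)$ via the Leray--Serre spectral sequence of $F(\B)\to U(\A')$.

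The main obstacle is to verify that this $H_q$-class actually witnesses integral $p$-torsion, i.e., that it is not killed by Serre differentials nor rendered divisible. Both reduce to a naturality argument comparing the spectral sequences of $F(\B)\to U(\A')$ and $F_{m'}(\A')\to U(\A')$: Theorem~\ref{thm:milnor multi} ensures that $\tau$ survives in the latter, and the K\"unneth splitting of the fiber $\widetilde\Phi$ together with the naturality of the cyclic-cover construction on each polarization factor ensures that the shifted class survives in the former, guaranteeing that the transported $p$-torsion class truly appears in $H_q(F(\B),\Z)$.
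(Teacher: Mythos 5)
Your overall strategy—exploiting the product decomposition $U(\B)\simeq U(\A')\times\prod_K\Phi_K$ behind the polarization, with $\Phi_K\simeq\bigvee^{m'_K-1}S^1$ for each $K$ with $m'_K\ge2$, and shifting the degree-one torsion up via K\"unneth—is the right idea and matches what \S\ref{subsec:polar} indicates about the proof in \cite{DeS12}.  But two steps do not survive scrutiny.  The cover $\widetilde\Phi\to\Phi$ is the \emph{diagonal} $\Z_N$-cover (every meridian $\mapsto 1$), and this is not a product of cyclic covers of the individual factors $\Phi_K$.  For instance, the diagonal $\Z_N$-cover of $(\bigvee^2S^1)\times S^1$ has fundamental group $\ker\bigl(F_2\times\Z\twoheadrightarrow\Z_N\bigr)$, which is a central extension of $F_2$ by $\Z$ and hence isomorphic to $F_2\times\Z$; so that cover is again $(\bigvee^2S^1)\times S^1$ up to homotopy, whereas the product of individual $\Z_N$-covers would be $(\bigvee^{N+1}S^1)\times S^1$.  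Thus the ``K\"unneth-type splitting'' of $\widetilde\Phi$, on which your construction of a degree-$s$ class by iterated external products rests, is unavailable; one must instead compute $H_*(F(\B),\Z_{(p)})$ by decomposing the group ring as $\Z_{(p)}[\Z_N]\cong\prod_{d\mid N}R_d$ (cyclotomic factors, valid since $p\nmid N$) and applying the twisted K\"unneth formula over each $R_d$.

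The second gap is more serious and concerns the degree count.  The claim that a factor with $m'_K=2$ ``contributes no degree-shift'' fails precisely in the cyclotomic sector carrying the torsion.  In the decomposition $H_1(F_{m'}(\A'),\Z_{(p)})\cong\bigoplus_{d\mid N}H_1(U(\A'),R_d)$, the $p$-torsion class $\tau$ must live in a summand with $d>1$, since the $d=1$ summand is $H_1(U(\A'),\Z_{(p)})$, which is torsion-free.  But for $d>1$ with $p\nmid d$, the element $\zeta_d-1$ is a unit in $R_d=\Z_{(p)}[\zeta_d]$, so $H_*(S^1,R_d)=0$.  Therefore every factor $\Phi_K\simeq S^1$ (that is, $m'_K=2$) kills the entire $R_d$-summand of $H_*(\Phi,R_d)$ for all $d>1$, and with it the torsion you are trying to transport.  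The multiplicities produced by Theorem~\ref{thm:milnor multi} need not avoid $m'_K=2$ (Example~\ref{ex:mf deleted B3} has $m'=(2,1,3,3,2,2,1,1)$), so a further adjustment of $m'$ with $m'_K\ne2$ for all $K$ is required—exactly the change made in Example~\ref{ex:polar delb3} to $m'=(8,1,3,3,5,5,1,1)$.  Once that is done, $|\{K:m'_K\ge3\}|$ coincides with $|\{K:m'_K\ge2\}|$, and the degree shift of the twisted K\"unneth matches the $q$ in the statement.
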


\begin{example}
\label{ex:polar delb3}
Let $\A'$ be the deleted $\operatorname{B}_3$ 
arrangement from Examples \ref{ex:deleted B3} 
and \ref{ex:mf deleted B3}.  Then the choice of multiplicities 
$m'=(8,1,3,3,5,5,1,1)$ produces an arrangement 
$\B=\A'\| m'$ of $27$ hyperplanes in $\C^8$, such that 
$H_6(F(\B),\Z)$ has $2$-torsion of rank $108$. 
\end{example}

\subsection{Characteristic varieties of the Milnor fiber}
\label{subsec:cvmf}

Very little is known about the homology with coefficients in 
rank $1$ local systems of the Milnor fiber of a (multi-)\linebreak 
arrangement $(\A,m)$.  Since $\fma$ is a smooth, quasi-projective 
variety, deep theorems of Arapura \cite{Ar} and Budur--Wang \cite{BW} 
guarantee that the characteristic varieties $\VV^q_s(\fma, \C)$ 
are unions of torsion-translated subtori.  Let us analyze in 
more detail these varieties, following the approach 
of Dimca and Papadima from \cite{DP11}.  

As in \S\ref{subsec:mf cover}, 
let $\pi=\pi_m(\A)\colon F_m(\A)\to U(\A)$ be the 
restriction of the Hopf fibration to the Milnor fiber, 
and let $h\colon F_m(\A)\to F_m(\A)$ be the monodromy 
of the Milnor fibration.  Since $\pi$ is a finite, regular cover, 
Proposition \ref{prop:jumpcov} guarantees that  
$\pi^*(\V^q_s(U(\A), \k))\subseteq \V^q_s(F_m(\A), \k)$ and  
$\pi^*(\RR^q_s(U(\A), \k))\subseteq \RR^q_s(F_m(\A), \k)$, 
for all $q\ge 0$ and $s\ge 1$.  

If $h_*$ acts as the identity on $H_1(F_m(\A),\k)$, more can be said.  
Indeed, Corollary \ref{cor:trivnom} implies that 
$\pi^*\colon \RR^1_s(U(\A), \k)\to \RR^1_s(F_m(\A), \k)$ is 
an isomorphism, for all $s\ge 1$. Furthermore, by Theorem \ref{thm:tcone}, 
the map $\pi^*\colon H^1(U(\A),\C^*) \to H^1(F_m(\A),\C^*)$ restricts to  
an isomorphism between the components through the identity 
of $\V_s(\A)$ and $\V^1_s(F_m(\A),\C)$. 

Let us restrict now our attention to the usual Milnor fiber, 
$F(\A)$, and its degree $1$ characteristic varieties, 
$\V_s(F(\A))=\V^1_s(F(\A),\C)$. In general, the inclusion 
$\pi^*\colon \V_s(\A) \inj \V_s(F(\A))$ is strict. 
For instance, suppose $\A$ admits a non-trivial, 
reduced multinet $\mathcal{M}$, and let $T_\mathcal{M}$ 
be the corresponding component of $\V_1(\A)$. It is 
then shown in \cite{DP11} that $\V_1(F(\A))$ has a component 
passing through the identity and containing $\pi^*(T_\mathcal{M})$ 
as a proper subset. 

\begin{example}
\label{ex:cv mf braid}
Let $\A$ be the braid arrangement from Figure \ref{fig:braid}.  
Recall from Examples \ref{ex:cv braid} and \ref{ex:mf braid arr} 
that $\VV_1(\A)$ has four local components, 
$T_1,\dots, T_4$, corresponding to the triple points, 
and an essential component $T$, corresponding 
to a $(3,2)$-net. By the above discussion, 
the characteristic variety $\VV_1(F(\A))\subset (\C^*)^7$ 
has $2$-dimensional components $\pi^*(T_1), \dots ,\pi^*(T_4)$, 
as well as  a component $W$ strictly containing $\pi^*(T)$.  
As noted in \cite{DP11}, $W$ is a $4$-dimensional
algebraic subtorus. Direct computation shows that 
$\VV_1(F(\A))$ has no other irreducible components. 
\end{example}

\begin{example}
\label{ex:hesse mf cv}
Let $\A$ be the Hessian arrangement from Example \ref{ex:hesse}.  
We know that $\VV_1(\A)$ has $10$ components passing through 
the origin, all $3$-dimensional:  $9$ of those are local components,  
while the last one, $T$, corresponds to the $(4,3)$-net 
from Figure~\ref{fig:hessian}.  As noted in \cite{DP11}, the 
component $W$ of $\VV_1(F(\A))$ containing $\pi^*(T)$ 
has dimension $9$.  In particular, this shows that $b_1(F(\A))\ge 15$.
\end{example}

\begin{problem}
\label{quest:mf cv}
Find a more precise description of the characteristic varieties 
$\VV_s(F(\A))$, and, more generally, $\VV^1_s(F_m(\A),\k)$. 
\end{problem}

\subsection{The formality problem}
\label{subsec:formal mf}

The following question was raised in \cite{PS-formal}, 
in a more general context:  Is the Milnor fiber of a 
hyperplane arrangement always formal?   
Of course, if $\rank(\A)=2$, then $F(\A)$ has the homotopy 
type of a wedge of circles, and so it is formal.  
In general, though, the answer is no, as illustrated 
by the following example from \cite{Zu}. 

\begin{example}
\label{ex:zuber}
Let $\A$ be the Ceva($3$) arrangement from Example \ref{ex:ceva3},  
and let $T$ be the $2$-dimensional, essential component of $\VV_1(\A)$ 
described in Example \ref{ex:ceva bis}. The pullback $\pi^*(T)$ 
is a $4$-dimensional subtorus of $H^1(F(\A),\C^*)=(\C^*)^{12}$, 
of the form $\exp(L)$, for some linear subspace $L\subset H^1(F(\A),\C)$. 
Using the mixed Hodge structure on the cohomology of the 
Milnor fiber, Zuber \cite{Zu} shows that $L$ 
cannot possibly be a component of the resonance variety $\RR_1(F(\A))$.  
Hence, the tangent cone formula from Theorem \ref{thm:tcone} is 
violated, and thus, the space $F(\A)$ cannot be $1$-formal. 
\end{example}

This example raises several questions, all interrelated. 

\begin{problem}
\label{quest:mf tc}
Find a concrete description of the resonance varieties 
$\RR_s(F(\A))$, and, more generally, $\RR^1_s(F_m(\A),\k)$. 
For which arrangements $\A$ does the tangent cone formula 
$\TC_1(\VV_1(F(\A)))= \RR_1(F(\A))$ hold?
\end{problem}

\begin{problem}
\label{quest:mf nonf}
Give a purely topological explanation for the non-formality of 
the Milnor fibers of arrangements such as the Ceva($3$) 
arrangement.
\end{problem}

Such an explanation could involve either an explicit 
computation of the resonance varieties of Milnor fibers 
(as in Problem \ref{quest:mf tc}), or the study of 
triple Massey products in the cohomology of the 
Milnor fiber, or perhaps some completely new method.

\begin{problem}
\label{quest:mf formal}
Given a multi-arrangement $(\A,m)$, determine whether 
the Milnor fiber $F_m(\A)$ is formal or not.  Does this formality 
property depend only on the underlying arrangement $\A$, 
or also on the multiplicity vector $m$?
\end{problem}

\section{The boundary manifold of an arrangement}
\label{sect:boundary}

The boundary manifold of a hyperplane arrangement $\A$ 
in $\C^{d+1}$ is the boundary of a regular neighborhood in 
$\CP^{d}$ of the union of the projective hyperplanes 
comprising $\P(\A)$. 
In this section, we survey a number of known results (primarily 
from  \cite{CS06, CS08}) regarding 
the cohomology ring, fundamental group, jump loci, and 
formality properties of boundary manifolds of arrangements.

\subsection{The boundary manifold}
\label{subsec:bdry nbhd}
Let $\A$ be a (central) arrangement of hyperplanes in 
$\C^{d+1}$ ($d\ge 1$) with union $V=\bigcup_{H\in \A} H$ 
and complement $M= \C^{d+1}\setminus V$. 
Likewise, let $\P(\A)=\set{\P(H) \mid H\in \A}$ 
be the corresponding arrangement in $\CP^{d}$, 
with union $W=\P(V)$ and complement $U=\P(M)$.  
A regular neighborhood $\nu(W)$ of the algebraic hypersurface 
$W\subset \CP^{d}$ may be constructed as follows. 

Let $\phi\colon\CP^{d} \to \R$ be the smooth function 
defined by $\phi([z]) = \abs{Q(z)}^2 / \norm{z}^{2n}$, where 
$Q$ is a defining polynomial for the arrangement, 
and $n=\abs{\A}$. Then, for sufficiently small $\delta>0$, 
the preimage $\phi^{-1} ([0,\delta])$ is a closed, regular 
neighborhood of $W$.   (Since $Q$ is homogeneous, 
we may simply take $\delta=1$.) 
Alternatively, one may triangulate 
$\CP^{d}$ with $W$ as a subcomplex, and take $\nu(W)$ 
to be the closed star of $W$ in the second barycentric 
subdivision.  

As shown by Durfee \cite{Durfee}, these constructions yield 
isotopic neighborhoods, independent of the choices made.  
Evidently, $\nu(W)$ is a compact, orientable, smooth 
manifold with boundary, of dimension $2d$; 
moreover, $\nu(W)$ deform-retracts onto $W$. 

The  {\em exterior}\/ of the projectivized arrangement, 
denoted $\Uc$, is the complement in $\CP^{d}$ 
of the  open regular neighborhood $\nuc(W)$. 
Clearly, $\Uc$ is a compact, connected, orientable, smooth 
$2d$-manifold with boundary. Moreover, $U$ deform-retracts 
onto $\Uc$, and thus $\Uc\simeq U$.

\begin{definition}
\label{def:bdry mfd}
The \emph{boundary manifold}\/ of a hyperplane arrangement 
$\A$ in $\C^{d+1}$ is the common boundary 
\begin{equation*}
\label{eq:bdu}
\bdU= \partial \nu(W)
\end{equation*}
of the exterior $\Uc$ and the regular neighborhood 
$\nu(W)$ defined above.
\end{definition}

Note that $\bdU$ is a compact, orientable, smooth  
manifold (without boundary), of dimension $2d-1$.   
The inclusion map $j\colon \bdU \to \Uc$ is a  
$(d-1)$-equivalence, see \cite[Proposition 2.31]{Di92}; 
in particular, $\pi_i(\bdU) \cong \pi_i(U)$ for $i<d-1$.

If $d=1$, then $\P(\A)$ consists of $n$ distinct 
points in $\CP^1$; thus, the boundary manifold $\bdU$ 
consists of $n$ disjoint small circles around those points, 
and there is not much else to say.
Consequently, we will assume from now on that 
$d \ge 2$, in which case $\bdU$ is connected.  

Let us illustrate these definitions with a couple of examples, 
extracted from \cite{CS06}. 

\begin{example}
\label{ex:bdry pencil}
Let $\A$ be a pencil of $n$ hyperplanes in $\C^{d+1}$, 
defined by the polynomial $Q=z_1^{n}-z_2^{n}$. 
If $n=1$, then $\Uc=D^{2d}$ and $\bdU=S^{2d-1}$. 
Otherwise, $U$ may be realized as the complement of 
$n-1$ parallel hyperplanes in $\C^{d}$.  
In this case, 
\[
\Uc =(D^2\setminus \{\text{$n$ disjoint 
open disks}\})\times D^{2(n-1)},
\] 
and thus $\bdU$ is diffeomorphic to the connected 
sum $\sharp^{n-1} S^{1}\times S^{2(d-1)}$.  
\end{example}  

\begin{example}
\label{ex:boundary near-pencil}
Let $\A$ be a near-pencil of $n$ planes in $\C^{3}$, 
defined by the polynomial $Q=z_1(z_2^{n-1}-z_3^{n-1})$. 
In this case, $\bdU=S^1\times\Sigma_{n-2}$, 
where $\Sigma_g=\sharp^{g} S^1\times S^1$ 
denotes the orientable surface of genus $g$ 
(see also Example \ref{ex:bd mf near-pencil}).
\end{example}  

\subsection{Homology groups and cup products}
\label{subsec:homology bdry}

As shown in \cite{CS06}, the long exact sequence 
of the pair $(\Uc,\bdU)$ breaks into split, short exact sequences, 
\begin{equation} 
\label{eq:ubdu}
\xymatrixcolsep{16pt}
\xymatrix{0 \ar[r]& H_{q+1}(\Uc,\bdU,\Z)\ar[r]
& H_q(\bdU,\Z) \ar^{j_*}[r]& H_q(\Uc,\Z) \ar[r]& 0}.
\end{equation}

By Lefschetz duality, 
$H_{q+1}(\Uc,\bdU,\Z)\cong  H^{2d -q -1}(\Uc,\Z)$. 
Since $\Uc\simeq U$, and since the homology groups of 
$U$ are torsion-free, we obtain a direct sum decomposition
\begin{equation} 
\label{eq:hom bdry}
H_q(\bdU,\Z) \cong H_q(U,\Z) \oplus H_{2d-q-1}(U,\Z). 
\end{equation}
Hence, $\Poin(\bdU,t) = \Poin(U,t) + 
t^{2d-1}\cdot \Poin(U,t^{-1})$.

The cohomology groups of $\bdU$ admit a decomposition 
similar to the one from \eqref{eq:hom bdry}. 
To describe the cup-product structure in $H^*(\bdU,\Z)$, 
we first need to review some notions. Let $A$ be a graded, 
finite-dimensional algebra over $\Z$.  Assume 
that $A$ is graded-commutative, of finite type 
(i.e., each graded piece $A^q$ is a finitely 
generated $\Z$-module), and connected 
(i.e., $A^0=\Z$).   
Then, the $\Z$-dual $\check{A} = \Hom_{\Z}(A,\Z)$ 
is an $A$-bimodule, with left and right 
multiplication given by $(a\cdot f) (b)= f(ba)$ and 
$(f \cdot a) (b) =f(ab)$, respectively.  Moreover, 
this bimodule structure is compatible with the gradings:   
if $a\in A^{q}$ and $f\in \check{A}^{p}$, then both 
$af$ and $fa$ belong to $\check{A}^{p-q}$. 

\begin{theorem}[\cite{CS06}]
\label{theo:coho double}
Let $\A$ be an arrangement in $\C^{d+1}$, 
let $A=H^*(U,\Z)$ be the cohomology ring of the projectivized 
complement, and let $\widehat{A}=H^*(\bdU,\Z)$ be 
the cohomology ring of the boundary manifold. Then 
$\widehat{A}=A\oplus \check{A}$, with 
multiplication given by $(a,f)\cdot (b,g) = (ab,ag+fb)$, 
and grading $\widehat{A}^{q}=A^{q} \oplus \check{A}^{2d-q-1}$. 
\end{theorem}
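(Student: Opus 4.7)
The plan is to prove Theorem \ref{theo:coho double} in two stages: first to establish the additive decomposition $\widehat{A}^q \cong A^q \oplus \check{A}^{2d-q-1}$, and then to identify the multiplicative structure as the trivial (square-zero) extension of $A$ by the $A$-bimodule $\check{A}$.

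For the additive part, I would apply the long exact sequence of the pair $(\Uc,\bdU)$ in cohomology,
\[
\cdots \to H^{q}(\Uc, \bdU) \to H^q(\Uc) \xrightarrow{j^*} H^q(\bdU) \xrightarrow{\delta} H^{q+1}(\Uc, \bdU) \to \cdots,
\]
together with Lefschetz duality $H^{q+1}(\Uc, \bdU) \cong H_{2d-q-1}(\Uc)$ and the retraction $\Uc \simeq U$, so that $H^{q+1}(\Uc, \bdU) \cong H_{2d-q-1}(U)$, which is isomorphic to $\check{A}^{2d-q-1}$ by universal coefficients (torsion-freeness of $H_*(U,\Z)$). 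The split short exact sequence \eqref{eq:ubdu} in homology dualizes to a split short exact sequence $0 \to A^q \xrightarrow{j^*} \widehat{A}^q \xrightarrow{\delta} \check{A}^{2d-q-1} \to 0$, again using torsion-freeness, which produces the additive decomposition.

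For the ring structure, $j^*\colon A \to \widehat{A}$ is a ring homomorphism by naturality of cup product, and by the splitting above it is injective, so $A$ embeds as a graded subring of $\widehat{A}$. The complementary summand $I \subset \widehat{A}$, identified via $\delta$ with $H^{*+1}(\Uc, \bdU)$, inherits an $A$-bimodule structure from $\widehat{A}$. Since $\delta$ is a homomorphism of $H^*(\Uc)$-modules and since Lefschetz duality intertwines the cup-product $A$-module structure on $H^*(\Uc, \bdU)$ with the cap-product $A$-module structure on $H_*(\Uc)$, the bimodule structure on $I$ matches the canonical structure on $\check{A}$ dictated by the Kronecker evaluation pairing $A \otimes \check{A} \to \Z$, after the appropriate degree shift.

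The hardest step is showing that $I \cdot I = 0$, which is the essential trivial-extension part of the claim. My approach would be to use Poincar\'e duality on the closed oriented $(2d-1)$-manifold $\bdU$ to compute $x \cdot y$ (for $x, y \in I$) via the intersection pairing, and verify that, under the direct-sum decomposition, this pairing reduces to the Kronecker pairing between $A$ and $\check{A}$, which vanishes on inputs both lying in $\check{A}$ for bidegree reasons. Alternatively, one may represent elements of $I$ through the Mayer--Vietoris decomposition $\CP^d = \Uc \cup \nu(W)$ as pulled back from classes on the neighborhood side, and control the cup product via the relative cup-product pairing $H^*(\Uc, \bdU) \otimes H^*(\Uc, \bdU) \to H^*(\Uc, \bdU)$ together with the Leibniz-type compatibility of $\delta$. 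Either route demands careful bookkeeping with the Lefschetz and Poincar\'e duality isomorphisms; a cochain-level route, via an explicit CW or handlebody model of $\bdU$ as used in \cite{CS06}, provides a concrete alternative for verifying the cup product formulas directly.
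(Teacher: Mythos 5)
Your additive step matches what the paper itself sketches: the split exact sequence \eqref{eq:ubdu}, Lefschetz duality $H_{q+1}(\Uc,\bdU,\Z)\cong H^{2d-q-1}(\Uc,\Z)$, the retraction $\Uc\simeq U$, and torsion-freeness. Note, though, that the paper does not reprove Theorem~\ref{theo:coho double}: it only recalls the additive decomposition and cites \cite{CS06} for the ring structure (spelling out explicit formulas in the $d=2$ case). So for the multiplicative part there is no in-paper proof to compare against, and your outline has to stand on its own.

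There it has a genuine gap. You correctly isolate the hard step ($I\cdot I=0$), but the proposed justification does not close it. First, the splitting $\widehat A=A\oplus I$ is not canonical: $\im(j^*)$ is its own Poincar\'e-duality annihilator (it is ``Lagrangian,'' since $A^p\cdot A^{2d-1-p}\subset A^{2d-1}=0$), so there is no orthogonal complement and a choice of complementary Lagrangian $I$ has to be made and shown to be $A$-stable. Second, even granting a good splitting and the asserted block form of the duality pairing, you only get the \emph{top-degree} vanishing $\langle I^p,I^{2d-1-p}\rangle=0$; concluding $I\cdot I=0$ in lower degrees requires pairing $xy$ ($x,y\in I$) against a third class $z$, and when $z\in I$ the resulting triple product involves exactly the $A$-component of $yz$ that you have not yet shown to vanish, so the argument is circular. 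Concretely, for $d=2$ the dangerous component is $\check A^2\otimes\check A^2\to A^2\subset\widehat A^2$, and there is no bidegree obstruction to it. The claim ``vanishes for bidegree reasons'' thus restates what must be proved. Your Mayer--Vietoris alternative, exploiting that $\bdU$ bounds on both sides of $\CP^d=\Uc\cup\nu(W)$, is much closer to what actually makes the argument work in \cite{CS06}, but as written it too is only a plan, not a proof.
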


Consequently, the cohomology ring of the boundary manifold 
depends only on the intersection lattice of the arrangement.
In the case when $d=2$, the structure of this 
ring can be described in more concrete terms, as follows.  

Write $A=A^0 \oplus A^1 \oplus A^2$, and fix ordered bases, 
$\{\alpha_1,\dots ,\alpha_{n}\}$ for $A^1$ and  
$\{\beta_1,\dots , \beta_{m}\}$ for $A^2$.  
The multiplication map,  
$\mu\colon A^1 \otimes A^1 \to A^2$, is then given by 
$\mu(\alpha_i , \alpha_j) =
\sum_{k=1}^{m}\mu_{ijk}\,  \beta_k$, 
for some integer coefficients  
satisfying $\mu_{jik}=-\mu_{ijk}$. 
Now write  
$\widehat{A}= A^0 \oplus (A^1 \oplus \check{A}^2) \oplus 
(A^2 \oplus \check{A}^1) \oplus \check{A}^0$, 
and pick dual bases 
$\{\alpha'_1,\dots, \alpha'_n\}$  for $\check{A}^{1}$ and 
$\{\beta'_1,\dots , \beta'_m\}$  for $\check{A}^{2}$. 
The multiplication map  
$\hat{\mu}\colon \widehat{A}^1 \otimes \widehat{A}^1 \to \widehat{A}^2$ 
restricts to $\mu$ on $A^1 \otimes A^1$, vanishes on 
$\check{A}^2\otimes \check{A}^2$, while on $A^1\otimes \check{A}^2$,  
it is given by $\hat{\mu}(\alpha_j ,\beta'_k) =
\sum_{i=1}^{n} \mu_{ijk}\, \alpha'_i$. 
Finally, $\hat{\mu}(\alpha_i, \alpha_j')=\hat{\mu}(\beta_i ,\beta'_j)=
\delta_{ij} \omega$, where $\omega$ is the generator of $\check{A}^0$ 
dual to $1\in A^0$.

\subsection{Resonance varieties}
\label{subsec:res bdry}

The resonance varieties of the boundary manifold of an  
arrangement $\A$ in $\C^{d+1}$ were studied in detail 
in \cite{CS06, CS08}.   
If $d \ge 3$, then $H^1(\bdU,\C)=H^1(U,\C)$, and 
the resonance varieties of $\bdU$ can be expressed 
solely in terms of the resonance varieties of $U$:
\begin{equation}
\RR^{q}_{s}(\bdU)=
\begin{cases}
\RR^{q}_{s}(U)
& \text{if $q \le d-2$,} \\[3pt]
\bigcup_{i+j=s} \bigl(\RR^{d-1}_{i}(U) \cap
\RR^{d}_{j}(U)\bigr)
& \text{if $q=d-1$ or $q=d$,} \\[3pt]
\RR^{2d-k-1}_{s}(U)
& \text{if $q \ge d+1$.}
\end{cases}
\end{equation}

Now suppose $\A$ is an arrangement of $n$ planes in $\C^3$, 
and let us consider the resonance varieties $\RR^1_s(\bdU)$ 
of its boundary manifold.  For depth $s=1$, these varieties 
admit a particularly simple description:
\begin{equation}
\label{eq:r1 bdry}
\RR^1_1(\bdU)=\begin{cases}
\C^{n-1} &\text{if $\A$ is a pencil,}\\
\C^{2(n-2)} &\text{if $\A$ is a near-pencil,}\\    
H^1(\bdU,\C) &\text{otherwise.}
\end{cases}
\end{equation}

The higher-depth resonance varieties, though, can be much 
more complicated, as the following example from 
\cite[Corollary 6.11]{CS06} illustrates.   

\begin{example}
\label{ex:res lines}
Let $\A$ be an arrangement of $n+1$ planes in $\C^3$, in general 
position.   If $n\ge 4$ and $\binom{n}{2}<s<\binom{n}{2} + n-2$, 
then the resonance variety $\RR^1_s(\bdU)$ is the zero locus 
of the Pfaffians (of appropriate size) of a generic $n\times n$ 
skew-symmetric matrix. In particular, $\RR^1_s(\bdU)$ is a 
singular, irreducible variety.  
\end{example}

\subsection{Graph manifold structure}
\label{subsec:graph manifold}

As noted in \S\ref{subsec:bdry nbhd}, if $\A$ is an 
arrangement in $\C^{d+1}$, with $d>3$, then $\pi_1(\bdU)=\pi_1(U)$. 
Thus, from the point of view of the fundamental group of the 
boundary manifold, the most interesting dimension to study 
is $d=3$.  So assume for the rest of this section that 
$\A$ is an arrangement of planes in $\C^3$. In this case, 
the boundary manifold admits another interpretation, 
that arises in the work of Jiang and Yau \cite{JY93, JY98}, 
Hironaka \cite{Hi01}, and Westlund \cite{We}. 
 
Let $\P(\A)=\{\ell_1,\dots ,\ell_n\}$ be the projectivized 
line arrangement in $\CP^2$. Suppose $\P(\A)$ has $r$ 
non-transverse intersection points, i.e., points 
$p_J=\bigcap_{j \in J} \ell_j$, of multiplicity $\abs{J}\ge 3$. 
Blowing up $\CP^2$ at each of these points, we obtain an 
arrangement $\widetilde{\A}=\{L_1,\dots ,L_{n+r}\}$ in  
the rational surface 
$\widetilde{\CP}^2 \cong \CP^2 \# r \overline{\CP}^2$, 
consisting of the proper transforms of the lines of $\P(\A)$, 
together with the exceptional lines arising from the blow-ups.  

This construction realizes the boundary manifold $\bdU$ 
as a graph manifold, in the sense of Waldhausen.  The 
underlying graph $\G$ has vertex set $\sV$, with 
vertices in one-to-one correspondence with the lines 
of $\widetilde{\A}$: the vertex corresponding to $\ell_i$ is 
labeled $v_i$, while the vertex corresponding to $p_J$ 
is labeled $v_J$.  The edge set $\sE$ consists of  
an edge $e_{i,j}$ from $v_i$ to $v_j$ if $i<j$ and the 
corresponding lines $\ell_i$ and $\ell_j$ are transverse, 
and an edge $e_{J,i}$ from $v_J$ to $v_i$ if 
$p_J\in \ell_i$.  Each vertex gets assigned a weight, 
equal to the self-intersection number of the 
corresponding line in the blow-up:  
$v_i$ has weight 
$w_i=1-\abs{\set{J\mid p_J\in  \ell_i}}$, 
and $v_J$ has weight $w_J = -1$.  

\subsection{Fundamental group}
\label{subsec:pi1}

Applying a method due to Hirzebruch \cite{Hr95} to 
the graph manifold structure described above, 
Westlund \cite{We} obtained a presentation for 
the fundamental group of the boundary manifold 
of an arrangement in $\C^3$, as follows. 

Let $\Gamma$ be the weighted graph associated to 
$\P(\A)$, and choose an orientation on this graph.  Pick 
a maximal tree $\cT \subset \sE$, and list the remaining edges  
as $e_1,\dots, e_s$, where $s$ is the number of linearly 
independent cycles in $\Gamma$.   With these choices, 
the fundamental group of $\bdU(\A)$ has presentation
\begin{equation} 
\label{eq:pres}
\pi_1(\bdU)=
\left\langle
\begin{array}{l}%
 x_1,\dots,x_{n+r} \\[2pt]
y_1,\dots,y_s
\end{array}
\Bigg|
\begin{array}{ll}%
[x_i,x_j^{u_{ij}}], & (i,j)\in\sE \\ [2pt]
\prod_{j=1}^{n+r} x_j^{u_{ij}}, & 1\le i\le n
\end{array}
\right\rangle,
\end{equation}
where 
\[
u_{ij}=
\begin{cases}
w_i & \text{if $i=j$,}\\
y_k & \text{if $(i,j)$ is the $k$-th element of $\sE\setminus\cT$,}\\
y_k^{-1} & \text{if $(j,i)$ is the $k$-th element of $\sE\setminus\cT$,}\\
1 & \text{if $(i,j)$ or $(j,i)$ belongs to $\cT$,}\\
0 & \text{otherwise}.
\end{cases}
\]
Here $[a,b]=aba^{-1}b^{-1}$, $a^0=1$ is the identity element, 
and $a^b=b^{-1}ab$ for $b\neq 0$.  Note that if 
$i\neq j$ and $u_{ij}\neq 0$, then $u_{ji}=u_{ij}^{-1}$.

\begin{prop}[\cite{CS08}]
\label{prop:comm}
The presentation \eqref{eq:pres} may be simplified to a 
commu\-tator-relators presentation for $\pi_1(\bdU)$, 
with generators $x_1,\dots , x_{n-1}$ and 
$y_1,\dots ,y_s$ as above.
\end{prop}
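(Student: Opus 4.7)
The plan is to apply a sequence of Tietze transformations to the presentation \eqref{eq:pres}, using the $n$ product relations to eliminate the $r+1$ generators $x_n, x_{n+1}, \ldots, x_{n+r}$, and then to verify that all surviving relations lie in the commutator subgroup of the free group on $x_1, \ldots, x_{n-1}, y_1, \ldots, y_s$.

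First, for each non-transverse intersection point $p_J$ of $\P(\A)$ (which corresponds to an exceptional vertex $v_J$ with index in $\{n+1,\ldots,n+r\}$), I would pick an original line $\ell_{i(J)}$ incident to $p_J$ and use the product relation
\[
R_{i(J)} = \prod_{j=1}^{n+r} x_j^{u_{i(J),j}} = 1
\]
to solve for $x_J$. The key enabling point is that $u_{i(J),J}\in\{1,y_k^{\pm 1}\}$, so $x_J$ appears in $R_{i(J)}$ either directly or as the conjugate $y_k^{-1} x_J y_k$, and can be isolated as a word in the remaining generators. Performing this Tietze move for all $r$ non-transverse points eliminates the exceptional generators $x_{n+1},\ldots,x_{n+r}$ and consumes $r$ of the product relations. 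One further product relation at a suitable original vertex is then used in the same way to eliminate $x_n$, leaving the generator set $x_1,\ldots,x_{n-1},y_1,\ldots,y_s$.

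After these substitutions, the commutator relations $[x_i,x_j^{u_{ij}}]$ remain in commutator form, since any endomorphism of a free group carries commutators to commutators. For the $n-r-1$ surviving product relations, I would invoke a rank count: by \eqref{eq:hom bdry} we have $b_1(\bdU)=b_1(U)+b_2(U)=(n-1)+s$, matching the number of surviving generators exactly, so every surviving product relation must abelianize to zero and hence lies in the commutator subgroup of the free group on the surviving generators.

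The main obstacle is to make this last assertion explicit, i.e., to exhibit each leftover product relation as an actual product of commutators of the surviving generators rather than merely asserting membership in the commutator subgroup. I would handle this by tracking, term by term, the contribution of each substitution: inserting the defining word for an eliminated $x_J$ into a surviving $R_i$ produces commutators of the form $[x_j,y_k]$ (coming from non-tree edges of $\Gamma$) and $[x_j,x_{j'}]$ (coming from tree edges), with the pattern dictated by the edge structure of $\Gamma$ near the relevant vertices. This combinatorial bookkeeping on the weighted graph $\Gamma$, matched against the abelianization count above, is the technical heart of the argument.
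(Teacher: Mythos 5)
Your elimination scheme requires choosing, for each of the $r$ non-transverse points $p_J$, a distinct original line $\ell_{i(J)}$ through $p_J$, plus one further original line to eliminate $x_n$ --- that is, $r+1$ distinct indices in $\{1,\dots,n\}$, together with a system of distinct representatives for the bipartite incidence between non-transverse points and lines. Neither exists in general: for the Ceva($3$) arrangement of Example~\ref{ex:ceva3}, there are $n=9$ lines and $r=12$ triple points (and no double points at all), so $r+1>n$ and the map $J\mapsto i(J)$ cannot even be injective. (This example also signals that in \eqref{eq:pres} the product relations must be intended to range over all $n+r$ vertices rather than just the $n$ original ones, since otherwise there are too few relations for the abelianization of the presented group to equal $\Z^{n-1+s}$.)

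The elimination used in \cite{CS08} instead pivots at the exceptional vertices: since $w_J=-1$, the product relation at $v_J$ has the form $x_J^{-1}\prod_{i\sim J}x_i^{u_{Ji}}$, which isolates $x_J$ outright with no matching needed. After all $r$ exceptional generators are eliminated this way, the abelianized coefficient of $x_n$ in $R_n$ becomes $w_n+k_n=1$ (where $k_n$ is the number of blown-up points on $\ell_n$), so $R_n$ then eliminates $x_n$. Your concluding rank-count --- that $H_1(\bdU,\Z)\cong\Z^{n-1+s}$ being free abelian on the surviving generators forces every surviving relator to abelianize to zero --- is correct and is all that is needed; the explicit commutator bookkeeping you call ``the technical heart'' is unnecessary, because lying in the derived subgroup of the ambient free group is exactly what a commutator-relators presentation requires.
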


Instead of reproducing 
the precise set of relations here, let us first 
make a remark, and then illustrate with a simple example.  

\begin{remark}
\label{rem:bdry map}
Recall from \eqref{eq:ubdu} that the inclusion map 
$j\colon \bdU \to \Uc$ induces a split surjection in 
first homology, leading to the direct sum decomposition  
$H_1(\bdU,\Z)\cong 
H_1(\Uc,\Z) \oplus H_2(\Uc,\bdU,\Z)$.
The group $H_1(\Uc,\Z)=H_1(U,\Z)$ is 
freely generated by homology classes 
$x_1, \dots , x_{n-1}$ 
corresponding to the meridians around the 
first $n-1$ lines.  On the other hand, the group 
$H_2(\Uc,\bdU,\Z)$ is isomorphic to $H_1(\Gamma, \Z)$, 
and thus admits a basis $y_1,\dots ,y_s$ 
corresponding to disks in the exterior whose boundaries 
are the chosen cycles in the graph.  With respect to these 
generating sets, then, the abelianization 
map $\ab\colon \pi_1(\bdU)\to H_1(\bdU,\Z)$ takes 
$x_i$ to $x_i$ and $y_i$ to $y_i$,   
while the induced homomorphism $j_*\colon H_1(\bdU, \Z) 
\to H_1(\Uc,\Z)$ takes  $x_i$ to $x_i$ and $y_i$ to $0$.
\end{remark}

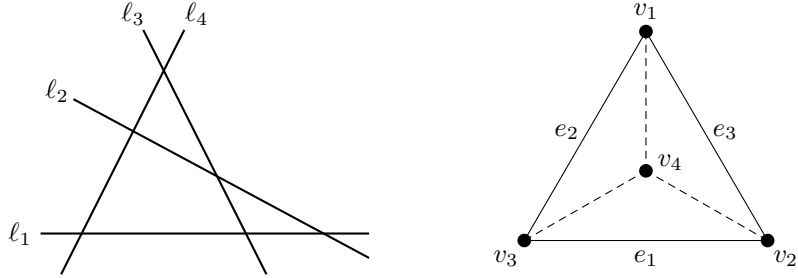
\begin{figure}
\centering
\begin{tikzpicture}[scale=0.54]
\draw[style=thick] (-0.5,3) -- (2.5,-3);
\draw[style=thick]  (0.5,3) -- (-2.5,-3);
\draw[style=thick] (-3,-2) -- (5,-2);
\draw[style=thick]  (-2.2,1.3) -- (5,-2.6);
\node at (-3.5,-2) {$\ell_1$};
\node at (-2.6,1.5) {$\ell_2$};
\node at (-0.75,3.4) {$\ell_3$};
\node at (0.8,3.4) {$\ell_4$};
\end{tikzpicture}
\hspace*{0.5in}
\begin{tikzpicture}[scale=0.8]
\draw (0,0) -- (2,3.4641) -- (4,0) -- cycle;
\draw[densely dashed] (0,0) -- (2,1.1547);
\draw[densely dashed] (2,3.4641) -- (2,1.1547);
\draw[densely dashed] (4,0) -- (2,1.1547);
\fill(0,0) circle (3.2pt);
\fill(2,3.4641) circle (3.2pt);
\fill(4,0) circle (3.2pt);
\fill(2,1.1547) circle (3.2pt);
\node at (-0.3,-0.3) {$v_3$};
\node at (2,3.8) {$v_1$};
\node at (4.3,-0.3) {$v_2$};
\node at (2.4,1.3) {$v_4$};
\node at (2,-0.3) {$e_1$};
\node at (0.7,1.8) {$e_2$};
\node at (3.3,1.8) {$e_3$};
\end{tikzpicture}
\caption{A general position arrangement and 
its associated graph}
\label{fig:4genlines}
\end{figure}

\begin{example} 
\label{ex:general position}
Let $\A$ be an arrangement of $4$ planes in $\C^3$ 
in general position.  In this case, $\G$ is the complete 
graph $K_4$.  Using the maximal tree indicated by 
dashed edges in Figure \ref{fig:4genlines}, we obtain 
the following presentation for the fundamental 
group of the boundary manifold of $\A$:
\begin{equation*} 
\label{eq:gen4}
\pi_1(\bdU)= 
\left\langle
\begin{array}{l}%
\\[-8.5pt]
x_1,x_2,x_3 \\[2.5pt]
y_{1},y_{2},y_{3}
\end{array}
\Bigg|
\begin{array}{ll}%
x_1x_2x_3=x_1x_{2}^{y_{1}^{}} x_{3}^{y_{2}^{}}
=
x_2 x_1^{y_{3}^{-1}} 
x_{3}^{y_{1}^{}} 
=
x_3 x_1^{y_{2}^{-1}} x_{2}^{y_{1}^{-1}}
 \\ [3pt]
[x_1^{},x_2^{y_{3}}] 
=
[x_1^{},x_3^{y_{2}}] 
=
[x_2^{},x_3^{y_{1}}]=1
\end{array}
\right\rangle.
\end{equation*}
\end{example}

\subsection{Alexander polynomial and characteristic varieties}
\label{subsec:cv bdry}

The next result expresses the Alexander polynomial 
of the boundary manifold in terms of the underlying 
graph structure.

\begin{theorem}[\cite{CS08}]
\label{thm:alex poly arr}
Let $\A$ be an essential arrangement of planes in  
$\C^3$, and let $\Gamma$ be the associated graph.  
Then the Alexander polynomial of the boundary 
manifold $\bdU$ is given by 
\[
\Delta_{\bdU} = \prod_{v \in \sV(\G)} (t_v-1)^{d_v-2}, 
\]
where $d_v$ denotes the degree of the vertex $v$, 
and $t_v=\prod_{i\in v} t_i$. 
\end{theorem}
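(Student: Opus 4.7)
The plan is to exploit the graph-manifold structure of $\bdU$ described in \S\ref{subsec:graph manifold}, writing $\Delta_{\bdU}$ as a product of local contributions indexed by the vertices of $\G$. First I would identify the pieces: for each vertex $v\in \sV(\G)$, the corresponding JSJ piece $N_v\subset \bdU$ is the restriction of the unit circle bundle of the normal bundle of $L_v\subset\widetilde{\CP}^2$ over $\Sigma_v:=L_v\setminus \bigsqcup_{e\ni v} D_e$, where the $D_e$ are small open disk neighborhoods of the intersections of $L_v$ with other components of $\widetilde{\A}$. Since $L_v\cong \CP^1$ and there are exactly $d_v$ such disks, $\Sigma_v$ is a sphere with $d_v$ holes, of Euler characteristic $\chi(\Sigma_v)=2-d_v$. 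For each edge $e=\{v,w\}\in \sE(\G)$, the pieces $N_v$ and $N_w$ share a gluing torus $T_e$, on which the roles of fiber and base-loop are interchanged.

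Second, I would identify, for each $v$, the class of the Seifert fiber of $N_v$ inside $H_1(\bdU,\Z)$. If $v=v_i$ corresponds to an original line, the fiber is tautologically the meridian $t_i$ of $\ell_i$. If $v=v_J$ corresponds to the exceptional line over a blow-up center $p_J$, a direct local computation on the blow-up identifies the meridian of the exceptional divisor with the product $\prod_{j\in J} t_j$ of the meridians of the lines through $p_J$ (since a small loop in $\bdU$ around the exceptional line is homotopic, in the boundary of a bidisk neighborhood of $p_J$, to the product of small loops linking the $\ell_j$, $j\in J$). In either case, the fiber class of $N_v$ is precisely $t_v$.

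The heart of the argument is then a multiplicative formula for $\Delta_{\bdU}$ indexed by the pieces. One route is to run a Mayer--Vietoris argument for Alexander modules over $\Lambda:=\Z[H_1(\bdU,\Z)]$: each piece $N_v$ is, up to homotopy, a circle bundle over a sphere with $d_v$ boundary components, so over $\Lambda$ its Alexander module is presented by a matrix whose order ideal is generated by $(t_v-1)^{d_v-2}$; each gluing torus $T_e$ contributes Alexander polynomial $1$ (the two meridian generators span a rank-$2$ free abelian subgroup of $H_1(\bdU,\Z)$, so the two $(t_v-1)$ and $(t_w-1)$ factors land in a regular sequence and cancel in the Mayer--Vietoris sequence). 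Equivalently, one may invoke Turaev's multiplicative formula for the Reidemeister--Turaev torsion of a graph manifold together with the standard identification of this torsion with the Alexander polynomial in positive $b_1$. Either way, the product over the pieces assembles to $\prod_{v}(t_v-1)^{d_v-2}$.

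The main obstacle lies in Step 3: running the Mayer--Vietoris argument cleanly and justifying that the boundary-torus contributions really cancel against the "missing" Euler characteristic of the base, taking into account the non-trivial Euler numbers $w_v$ of the Seifert fibrations (which, crucially, do not affect the \emph{abelianized} fiber class, but do affect intermediate modules). As a purely combinatorial alternative one can instead apply Fox calculus to the commutator-relator presentation of Proposition~\ref{prop:comm}: the resulting Alexander matrix has a block structure dictated by $\G$, and a careful minor computation (organized by the bipartition $\{v_i\}\sqcup \{v_J\}$ and by a spanning tree $\cT$) collapses to the stated product; this is conceptually transparent but bookkeeping-heavy.
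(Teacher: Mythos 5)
Since the theorem is cited to~\cite{CS08}, the ``paper's own proof'' is the one given there, which proceeds by applying Fox calculus to the (commutator-relators) presentation of $\pi_1(\bdU)$ from Proposition~\ref{prop:comm} and then computing the gcd of the codimension-one minors of the resulting Alexander matrix. That is precisely the route you list at the end as your fallback, so your \emph{primary} proposal -- exploiting the graph-manifold decomposition of $\bdU$, locating the Seifert fiber classes $t_v$ in $H_1(\bdU,\Z)$ (in particular identifying the meridian of the exceptional curve over $p_J$ with $\prod_{j\in J} t_j$), and assembling $\Delta_{\bdU}$ multiplicatively from the pieces $N_v\cong\Sigma_v\times S^1$ with local contribution $(t_v-1)^{-\chi(\Sigma_v)}=(t_v-1)^{d_v-2}$ -- is a genuinely different argument. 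It is more conceptual, and in particular exposes the geometric reason why only the fiber classes appear and why the exponents are Euler characteristics, at the cost of invoking either the multiplicativity of Reidemeister--Turaev torsion under toral gluings or a Mayer--Vietoris computation over the ring $\Lambda=\Z[H_1(\bdU,\Z)]$. A direct check bears out your key claim: the Fox matrix of $\pi_1(N_v)\cong F_{d_v-1}\times\Z=\langle x_1,\dots,x_{d_v-1},t_v\mid [x_i,t_v]\rangle$ is the $(d_v-1)\times d_v$ matrix with $(1-t_v)$ on the diagonal and $(x_i-1)$ in the last column, whose first elementary ideal has gcd $(t_v-1)^{d_v-2}$ as long as the images of the boundary loops $x_i$ in $H_1(\bdU,\Z)$ are independent of $t_v$ -- which holds because, after the plumbing identifications, each $x_i$ becomes the fiber class of the adjacent piece. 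Two technical points would need to be nailed down to make this rigorous: the pieces $N_v$ are manifolds with boundary, so one must use the version of the gluing formula that is stable under this (e.g., Turaev's multiplicativity theorem, noting $\tau(T^2)=1$ for generic coefficients and that the Mayer--Vietoris term contributes trivially), and one must invoke the identification $\tau(\bdU)\doteq\Delta_{\bdU}$, which requires $b_1(\bdU)\ge 2$ -- automatic here since $\A$ is essential. The Fox-calculus route of~\cite{CS08} avoids all of this machinery and also produces the explicit Alexander matrix, which is what is actually needed for the subsequent analysis of $\VV^1_1(\bdU)$; your torsion argument buys conceptual clarity and portability to general plumbed $3$-manifolds.
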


Note that $\Delta_{\bdU}(1)=0$.  In view of Proposition \ref{prop:v1 3m}, 
we obtain the following decomposition of the first characteristic 
variety of $\bdU$ into irreducible components:

\begin{equation}
\label{eq:v1 bdry}
\VV^1_1(\bdU) = \bigcup_{v \in \sV(\G)\, :\, d_v\ge 3} 
\set{t_v-1=0}.  
\end{equation}
In particular, every component of $\VV^1_1(\bdU)$ is a codimension $1$ 
algebraic subtorus of the character torus of $\bdU$. 

\subsection{Formality}
\label{subsec:formal bdry}

As we saw in Examples \ref{ex:bdry pencil} and 
\ref{ex:boundary near-pencil}, for simple arrangements 
such as pencils or near-pencils, the boundary manifold 
is built out of spheres by successive product and
connected sum operations, and thus is formal.  
On the other hand, consider the following, 
equally simple, example.  

\begin{example}
\label{ex:tc lines} 
Let $\A$ be an arrangement of $5$ planes in $\C^3$, in general 
position.   From the discussion in Example \ref{ex:res lines}, it 
follows that $\RR^1_7(\bdU)$ is isomorphic  to 
$\{z\in \C^{6} \mid z_1z_6 -z_2z_5+z_3z_4=0\}$, 
an irreducible quadric with an isolated 
singular point at $0$.  By Theorem \ref{thm:tcone}, then, 
the manifold $\bdU$ is not $1$-formal.

Alternatively, formula \eqref{eq:v1 bdry} implies that 
$\VV^1_s(\bdU)\subseteq \{1\}$, for all $s\ge 1$. Thus, 
$\TC_1(\VV^1_7(\bdU))\ne \RR^1_7(\bdU)$, showing 
again that $\bdU$ is not $1$-formal. 
\end{example}

The general situation was elucidated in \cite[Theorem 9.7]{CS08}.

\begin{theorem}[\cite{CS08}] 
\label{thm:bdry}
Let $\A$ be an arrangement of planes in $\C^3$, and let $\bdU$ 
be the corresponding boundary manifold. The following are equivalent:
\begin{enumerate}
\item  \label{f1} The manifold $\bdU$ is formal. 
\item  \label{f2} The group $\pi_1(\bdU)$ is $1$-formal.
\item  \label{f3} $\TC_1(\VV^1_1(\bdU))=\RR^1_1(\bdU)$. 
\item  \label{f4} $\A$ is either a pencil or a near-pencil. 
\item  \label{f5} $\bdU$ is either $\sharp^n S^1\times S^2$ or 
$S^1\times \Sigma_{n-1}$, where $n=\abs{\A}-1$.
\end{enumerate}
\end{theorem}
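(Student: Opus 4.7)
The plan is to prove a cyclic chain of implications $(5) \Rightarrow (1) \Rightarrow (2) \Rightarrow (3) \Rightarrow (4) \Rightarrow (5)$, concentrating the substance of the argument in the implication $(3) \Rightarrow (4)$.

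The chain begins with $(5) \Rightarrow (1)$: the manifold $S^1 \times \Sigma_{n-1}$ is a product of formal spaces and hence formal, while $\sharp^n S^1 \times S^2$ is formal by a direct construction of its minimal Sullivan model (alternatively, it is the double of a genus-$n$ handlebody, or one may invoke the Fern\'andez--Mu\~noz classification of formal closed $3$-manifolds). The step $(1) \Rightarrow (2)$ is standard, from Appendix \ref{sect:formal}: formality of a space forces $1$-formality of its fundamental group. The step $(2) \Rightarrow (3)$ is exactly the tangent-cone theorem (Theorem \ref{thm:tcone}) applied at depth $s=1$. Finally, $(4) \Rightarrow (5)$ is given directly by Examples \ref{ex:bdry pencil} and \ref{ex:boundary near-pencil}, which compute $\bdU$ explicitly in the pencil and near-pencil cases.

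The heart of the proof is $(3) \Rightarrow (4)$, which I would argue contrapositively. Assume $\A$ is neither a pencil nor a near-pencil. By \eqref{eq:r1 bdry}, one has $\RR^1_1(\bdU) = H^1(\bdU,\C)$, an affine space of dimension $b_1(\bdU) = (n-1)+\dim H_2(U,\Z) \geq n-1$; since a line arrangement which is neither a pencil nor a near-pencil must contain at least four lines, this dimension is at least $3$. On the other hand, \eqref{eq:v1 bdry} exhibits $\VV^1_1(\bdU)$ as a \emph{finite} union of codimension-$1$ algebraic subtori of the form $\{t_v = 1\}$, whose tangent cones at the origin are the hyperplanes $\{\sum_{i\in v} z_i = 0\} \subset H^1(\bdU,\C)$. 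Hence $\TC_1(\VV^1_1(\bdU))$ is a finite union of proper linear hyperplanes, which cannot exhaust the ambient vector space. This yields $\TC_1(\VV^1_1(\bdU)) \subsetneq \RR^1_1(\bdU)$, contradicting (3).

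The main obstacle is the formality claim in $(5) \Rightarrow (1)$: formality of the connected sum $\sharp^n S^1\times S^2$ is a classical but delicate issue, since connected-sum operations interact subtly with minimal models and must either be established by an explicit model construction or deferred to the Fern\'andez--Mu\~noz structure theorem. By contrast, once the formulas \eqref{eq:r1 bdry} and \eqref{eq:v1 bdry} are accepted, the decisive step $(3) \Rightarrow (4)$ reduces to the simple dimensional comparison above, and the entire theorem becomes a clean consequence of the detailed understanding of the jump loci of $\bdU$ developed in the preceding subsections.
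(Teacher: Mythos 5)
Your proof follows the same cyclic chain of implications as the paper, with the same contrapositive argument for $(3)\Rightarrow(4)$ based on comparing the formulas \eqref{eq:r1 bdry} and \eqref{eq:v1 bdry}. One small remark: your concern that $(5)\Rightarrow(1)$ is the ``main obstacle'' is unwarranted, since Appendix~\ref{sect:formal} already records that formality is preserved under connected sums of manifolds, so the formality of $\sharp^n S^1\times S^2$ is immediate and the paper rightly treats this step as obvious.
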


\begin{proof}
Implications \eqref{f1} $\Rightarrow$ \eqref{f2} $\Rightarrow$ 
\eqref{f3} are true in general (see Appendix \ref{sect:formal}), 
implication \eqref{f4} $\Rightarrow$ \eqref{f5} was 
discussed in Examples \ref{ex:bdry pencil} and 
\ref{ex:boundary near-pencil}, while 
\eqref{f5} $\Rightarrow$ \eqref{f1} is obvious. 

To prove \eqref{f3} $\Rightarrow$ \eqref{f4}, suppose 
$\A$ is neither a pencil, nor a near-pencil.  Then, by 
formula  \eqref{eq:r1 bdry},  $\RR^1_1(\bdU)=H^1(\bdU,\C)$.
On the other hand, formula \eqref{eq:v1 bdry} shows 
that $\VV^1_1(\bdU)$ is a union of codimension $1$ 
subtori in $H^1(\bdU,\C^*)$.  Thus, 
$\TC_1(\VV^1_1(\bdU))\ne \RR^1_1(\bdU)$, and we are done.
\end{proof}
\section{The boundary of the Milnor fiber of an arrangement}
\label{sect:bdry mf}

In this section, we study the boundary of the closed Milnor fiber 
of a hyperplane arrangement, and the way it relates to the 
boundary manifold of the arrangement. 

\subsection{The closed Milnor fiber}
\label{subsec:bdF}

As usual, let $\A$ be a (central) arrangement of hyperplanes 
in $\C^{d+1}$ ($d\ge 1$), with defining polynomial 
$Q(\A)=\prod_{H\in \A} f_H$.    
Let $V=V(\A)$ be the union  of the hyperplanes in $\A$, and let 
$M=M(\A)$ be its complement.  

Now let $m$ be a (primitive) multiplicity vector for $\A$, 
and let $\qam=\prod_{H\in \A} f_H^{m_H}$.
As mentioned in \S\ref{sect:mf arr}, the  polynomial map 
$Q_m=\qam\colon \C^{d+1}\to \C$ restricts to a smooth 
fibration, $Q_m\colon M \to \C^*$, with fiber 
$F=\fma$. 

Intersecting the global Milnor fiber with  
a ball in $\C^{d+1}$ of large enough radius, we obtain a 
compact, smooth, orientable $2d$-dimensional 
manifold with boundary,
\begin{equation}
\label{eq:fc}
\Fc_m(\A) = \fma\cap D^{2(d+1)}, 
\end{equation}
which we call the {\em closed Milnor fiber}\/ of the multi-arrangement.  
Clearly, $F_m(\A)$ deform-retracts onto $\Fc_m(\A)$; hence, 
$\Fc_m(\A)\simeq F_m(\A)$.

The {\em boundary of the Milnor fiber}\/ of $(\A,m)$ is   
the compact, smooth, orientable, $(2d-1)$-dimensional manifold
\begin{equation}
\label{eq:bdfma}
\bdF_m(\A) = \fma\cap S^{2d+1}.
\end{equation}

As usual, we will abbreviate $\Fc(\A)$ and $\bdF(\A)$ 
when all multiplicities $m_H$ are equal to $1$, and will 
drop $\A$ from the notation when the arrangement is understood. 
As noted in \cite[Proposition 2.4]{Di92}, the pair $(\Fc_m, \bdF_m)$ 
is $(d-1)$-connected. In particular, if $d\ge 2$, the boundary 
of the Milnor fiber is connected, and the inclusion-induced 
homomorphism $\pi_1(\bdF_m)\to \pi_1(\Fc_m)$ is surjective. 

\subsection{The local Milnor fibration}
\label{subsec:alt mf}

Returning to the general situation, let $F_m=\fma$ be the 
global Milnor fiber of a multi-arrangement.  Recall that  
the monodromy of the fibration $F_m\to M\to \C^*$ is the 
diffeomorphism $h\colon F_m\to F_m$ given by 
$h(z)=e^{2\pi \ii/N} z$, where $N=\sum_{H\in \A} m_H$.  

Clearly, the map $h$ restricts to a diffeomorphism 
$h\colon \Fc_m\to \Fc_m$.  Let $Y_m(\A)$ be the mapping 
torus of this diffeomorphism; we then have a smooth fibration 
\begin{equation}
\label{eq:milfib}
\xymatrix{\Fc_m \ar[r] &Y_m(\A) \ar[r] &S^1}.
\end{equation}

Now let $K=V\cap S^{2d+1}$ be the link of the singularity, 
and let $\nu(K)$ be a closed, regular neighborhood of $K$ 
inside the sphere. Then, as shown by Milnor in \cite{Mi}, 
there is a smooth fibration 
\begin{equation}
\label{eq:milfib2}
\xymatrixcolsep{18pt}
\xymatrix{\Fc_m \ar[r] & S^{2d+1}\setminus \nuc(K) 
\ar^(.62){Q_m/\abs{Q_m}}[rr] && S^1}.
\end{equation}
Furthermore, the local Milnor fibration \eqref{eq:milfib2} is 
equivalent, through a fiber-preserving diffeomorphism, to the 
fibration \eqref{eq:milfib}.

\begin{figure}
\centering
\begin{tikzpicture}[baseline=(current bounding box.center),scale=0.7]  
\clip (0,0) circle (3);
\draw[ style=thick, color=blue ] (-3,0) -- (3,0);
\draw[ style=thick, color=dkgreen] (0,-3) -- (0,3);
\draw[ ] (0,0) circle (2.4);
\draw[style=thick,color=orange, scale=1, domain=0.1:3,smooth,variable=\x] 
plot ({ \x },{1/\x});
\draw[style=thick,color=orange, scale=1, domain=-3:-0.1,smooth,variable=\x] 
plot ({ \x },{1/\x});
\end{tikzpicture}
\hspace*{0.3in}
\begin{tikzpicture}[baseline=(current bounding box.center),scale=0.72]  
\begin{scope}[even odd rule]
   \clip (0,0) circle(2.4) (1.8,0) circle (2.4);
   \fill[orange!40] (0,0) circle (2.4) (1.8,0) circle(2.4);
\end{scope}
\draw[dkgreen, very thick] (0,0)circle(2.4);
\draw[dkgreen!20!white, very thick] (-63:2.4) arc (-63:-73:2.4);
\draw[blue, very thick] (1.8,0)circle(2.4);
\draw[blue!20!white, very thick] (1.8,0)+(107:2.4) arc (107:117:2.4);
\draw[dkgreen, very thick] (63:2.4) arc (63:73:2.4); 
\end{tikzpicture}
\caption{Local Milnor fibration and closed Milnor fiber for $Q(\A)=xy$} 
\label{fig:mf local} 
\end{figure}
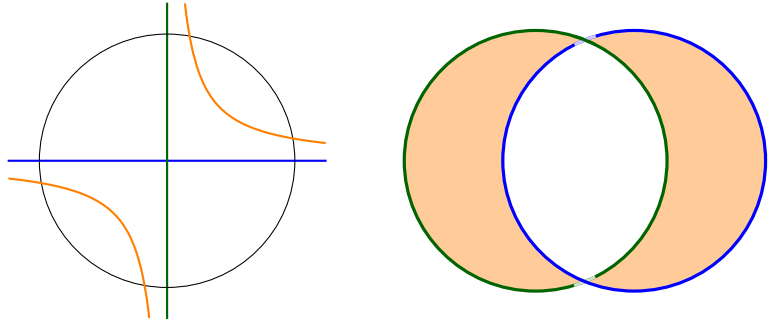

\begin{example}
\label{ex:seifert}
Let $\A$ be the arrangement in $\C^2$ defined by the polynomial 
$Q(\A)=z_1^n-z_2^n$.  Then $\Fc(\A)$ is a Seifert surface for the 
$n$-component Hopf link. This is a compact, connected, 
orientable surface of genus $\binom{n-1}{2}$, with $n$ open disks 
removed;  its boundary, $\bdF(\A)$, consists of $n$ disjoint circles.   

For $n=2$, the surface is the twisted cylinder depicted in 
Figure \ref{fig:mf local}, and the monodromy is a Dehn 
twist about the core of the cylinder.  (Although this diffeomorphism 
is isotopic to the identity, no such isotopy can be the identity on 
the two boundary circles.) 
\end{example}

\subsection{The double of the Milnor fiber}
\label{subsec:double milnor}

There are several ways in which the closed Milnor fiber 
and its boundary are related to the boundary manifold 
of an arrangement.  The next proposition (whose proof is 
adapted from the discussion in \cite[\S 2.8]{CS06}) details one 
such relationship, in the special case when the defining 
polynomial splits off a linear factor not involving the other 
variables. 

First recall a basic notion: the {\em double}\/ of a 
manifold $W$ with non-empty boundary $\partial W$ 
is the (closed) manifold $\partial(W\times [0,1])=
W\cup_{\partial W} W$. 

\begin{prop}
\label{prop:double mf}
Let $\A$ be a hyperplane arrangement in $\C^{d+1}$,   
defined by a polynomial of the form 
$Q(z_0,\dots, z_d)=z_0 Q_0(z_1,\dots, z_d)$.   
Let $\A_0$ be the arrangement in $\C^d$ defined by 
$Q_0$,  and let $\Fc_0 \to Y_0 \to S^1$ be the 
corresponding Milnor fibration. 
Then, the boundary manifold of $\A$ is diffeomorphic 
to the double of $Y_0$; moreover, $\bdU(\A)$ fibers over 
the circle, with fiber the double of $\Fc_0$.
\end{prop}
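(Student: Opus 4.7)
The plan is to exhibit a diffeomorphism $\Uc(\A)\cong Y_0\times I$ compatible with the local Milnor fibration of $\A_0$; the two assertions of the proposition will then follow by taking boundaries.

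First I would observe that since $\{z_0=0\}$ is one of the hyperplanes of $\A$, the hyperplane at infinity $H_\infty\subset\CP^d$ lies inside $W$, so any regular neighborhood $\nu(W)$ contains a tubular neighborhood of $H_\infty$; thus $\Uc(\A)$ is contained in the affine chart $\C^d=\CP^d\setminus H_\infty$. In this chart $W\cap\C^d$ equals $V_0:=V(Q_0)$, and $\Uc(\A)$ takes the form $B\setminus\nuc(V_0\cap B)$ for a closed ball $B\subset\C^d$ of sufficiently large radius. Since $V_0$ is a union of linear hyperplanes through the origin, hence invariant under the radial $\R_+$-action, I would choose a conical regular neighborhood of $V_0$ (slightly thickened at the origin so as to contain a small ball), and then use the radial flow $z\mapsto(|z|,z/|z|)$ to identify
\[
\Uc(\A)\;\cong\;\bigl(S^{2d-1}\setminus\nuc(K_0)\bigr)\times[\delta,R]\;=\;Y_0\times I,
\]
where $K_0=V_0\cap S^{2d-1}$ is the link of $\A_0$ and $Y_0=S^{2d-1}\setminus\nuc(K_0)$ is the exterior of the local Milnor fibration from \eqref{eq:milfib2}. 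Passing to boundaries then gives
\[
\bdU(\A)\;=\;\partial(Y_0\times I)\;=\;(Y_0\times\{0,1\})\cup(\partial Y_0\times I)\;\cong\;Y_0\cup_{\partial Y_0}Y_0,
\]
the double of $Y_0$; this proves the first assertion.

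For the fibration, I would consider the smooth map $\psi\colon\Uc(\A)\to S^1$ defined in the affine chart by $\psi(z)=Q_0(z)/|Q_0(z)|$, which is well defined on $\Uc(\A)\subset M(\A_0)$. Because $Q_0$ is homogeneous, $\psi$ is invariant under radial scaling, and hence under the identification $\Uc(\A)\cong Y_0\times I$ above it factors as the projection $Y_0\times I\to Y_0$ followed by the local Milnor fibration $Y_0\to S^1$ of \eqref{eq:milfib2}, whose fiber is $\Fc_0$. Thus $\psi$ is itself a smooth fiber bundle with fiber $\Fc_0\times I$, and its restriction to $\bdU(\A)=\partial(Y_0\times I)$ is a smooth fiber bundle over $S^1$ with fiber $\partial(\Fc_0\times I)=\Fc_0\cup_{\partial\Fc_0}\Fc_0$, the double of $\Fc_0$.

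The main technical delicacy I expect is smoothness: $\nu(W)$ must be chosen so that its contributions near $H_\infty$, along the affine hyperplanes of $V_0$, and near their common origin assemble into a genuine smooth submanifold (rather than a manifold with corners), and so that the radial identification above is a bona-fide diffeomorphism. The cleanest way forward is to build $\nu(W)$ as a sublevel set of a smooth, $S^1$-invariant, $\R_+$-homogeneous function in the spirit of Durfee's construction recalled in \S\ref{subsec:bdry nbhd}, and to verify by a direct computation in polar coordinates that the resulting conical model is smooth.
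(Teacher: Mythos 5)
Your proof is correct, but it takes a genuinely different route from the paper. The paper works directly with the boundary: it writes $\bdU$ as the union of the sphere piece $S^{2d-1}\setminus(S^{2d-1}\cap\nu(V_0))$ and the Milnor-tube piece $D^{2d}\cap\partial\nu(V_0)$, identifies each with $Y_0$ (the first is literally the link exterior from \S\ref{subsec:alt mf}, the second is a Milnor tube, also diffeomorphic to $Y_0$), and notes that they are glued along their common boundary $\partial Y_0$; the fibration statement then comes from observing that the two local Milnor fibrations on these pieces glue along $\partial Y_0$. You instead exploit the $\R_+$-homogeneity of $Q_0$ to build a conical regular neighborhood of $V_0$ and identify the entire exterior $\Uc$ with the product $Y_0\times I$ via the radial flow, from which both assertions drop out at once: $\bdU=\partial(Y_0\times I)$ is the double of $Y_0$, and the radially-invariant map $Q_0/|Q_0|$ restricts to the claimed fibration with fiber $\partial(\Fc_0\times I)$. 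Your version proves the stronger statement that $\Uc$ itself is a product, and treats the diffeomorphism and fibration claims uniformly, which is cleaner; the trade-off is the extra corner-smoothing care around the cone apex and the junction with the tubular neighborhood of $H_\infty$, which you flag correctly and which the paper's boundary-level decomposition avoids by never leaving $\bdU$. Both arguments are sound.
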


\begin{proof}
Let $V_0$ be the subvariety of $\C^d$ defined by $Q_0$. 
Its projective closure, $\overline{V}_0$, is the zero-set of $Q$; 
moreover, $\CP^{d}\setminus \overline{V}_0=\C^{d}\setminus V_0$.  
Forming the union of a regular neighborhood $\nu(V_0)$ 
with a tubular neighborhood of the hyperplane at infinity, , 
and rounding off corners, we obtain a regular neighborhood 
$\nu(\overline{V}_0)$.  Clearly,  $\CP^{d} \setminus \nuc(\overline{V}_0)$ 
is diffeomorphic to $D^{2d} \setminus (D^{2d}\cap \nuc(V_0))$. 
Hence, 
\begin{equation}
\label{eq:dec}
\bdU= \left( S^{2d-1} \setminus (S^{2d-1} \cap \nu(V_0))\right)\cup 
\left(D^{2d} \cap \partial \nu(V_0)\right)  .
\end{equation}

By the discussion from \S\ref{subsec:alt mf}, each of the two 
sides in the above decomposition is diffeomorphic to $Y_0$, 
and the gluing is done along their common boundary, $\partial Y_0$.  
Thus, $\bdU$ is the double of $Y_0$. The last assertion 
follows at once. 
\end{proof}

\begin{example} 
\label{ex:boolean 2}
Let $\B_n$ be the Boolean arrangement in $\C^{n}$.  
From Example \ref{ex:mf boolean}, we know that 
$F(\B_n)=(\C^*)^{n-1}$. Hence, 
$\Fc=(S^1\times [0,1])^{n-1}\cong T^{n-1}\times D^{n-1}$, 
and so $\bdF=T^{n-1}\times S^{n-2}$ 
(compare with \cite[\S 3, Exercise 1.15]{Di92}).  

Now note that $Q(\B_n)=z_0Q(\B_{n-1})$.  
We also know from 
Example \ref{ex:mf boolean} that the bundle 
$\Fc(\B_{n-1}) \to Y(\B_{n-1}) \to S^1$ is trivial.  
Applying Proposition \ref{prop:double mf}, then, 
shows that $\bdU = T^{n-1}\times  S^{n-2}$. 
\end{example}  

\begin{example}
\label{ex:bd mf near-pencil}
Let $\A$ be a near-pencil of $n$ planes in $\C^{3}$,   
defined by the polynomial $Q=z_0Q_0$, where 
$Q_0=z_1^{n-1}-z_2^{n-1}$. 
Then $Y_0$ admits a fibration over the circle 
(different from the Milnor fibration), whose fiber is 
$D^2$ with $n-1$ open disks removed, and whose 
monodromy is a Dehn twist about the boundary of $D^2$.  
Hence, $\bdU=S^1\times\Sigma_{n-2}$.  
As noted in \cite[Example 19.10.7]{NS}, we also 
have that $\bdF=S^1\times\Sigma_{n-2}$.  
\end{example} 

\subsection{The boundary of the Milnor fiber as a cyclic cover}
\label{subsec:bdFU}

We now describe a construction that realizes the boundary 
of the Milnor fiber associated to a hyperplane arrangement $\A$ 
as a regular, finite cyclic cover of the boundary manifold 
of the arrangement $\A$.   As usual, set $n=\abs{\A}$.

Recall that the Hopf fibration 
$\pi \colon \C^{d+1} \setminus \set{0} \to \CP^{d}$ 
restricts to a (trivializable) bundle map, 
$\pi\colon M\to U$, with fiber $\C^*$, 
which in turn restricts to a regular $\Z_n$-cover, 
$\pi\colon F\to U$.  

\begin{lemma}
\label{lem:bd cover}
The map $\pi \colon \C^{d+1} \setminus \set{0} \to \CP^{d}$ 
restricts to regular, cyclic $n$-fold covers, $\pi\colon \Fc \to \Uc$ 
and $\pi\colon \bdF \to \bdU$. 
\end{lemma}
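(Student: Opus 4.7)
The plan is to leverage Theorem \ref{thm:mf} (applied with $m_H=1$ for every $H$, so $N=n$), which already gives us a regular cyclic $n$-fold cover $\pi\colon F\to U$, and then to show that this cover restricts compatibly to the compact pieces $\Fc$, $\bdF$, $\Uc$, $\bdU$.

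First I would observe that the geometric monodromy
$h\colon F\to F$, $z\mapsto e^{2\pi\ii/n}z$, generates the deck group of $\pi\colon F\to U$, and acts by multiplication by a unit complex scalar. In particular, $h$ is norm-preserving, so it restricts to free $\Z_n$-actions on $F\cap S^{2d+1}_r$ and $F\cap D^{2(d+1)}_r$ for every radius $r>0$. Thus the restrictions $\pi\colon \bdF\to \pi(\bdF)$ and $\pi\colon \Fc\to \pi(\Fc)$ are, at the very least, well-defined quotient maps by a free $\Z_n$-action.

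The heart of the argument is to identify the images $\pi(\bdF)$ and $\pi(\Fc)$ with $\bdU$ and $\Uc$, respectively. Here I would choose the radius $r$ used to form $\bdF$ and the parameter $\delta$ used to define $\nu(W)=\phi^{-1}([0,\delta])$ in a compatible way, namely $\delta=r^{-2n}$, so that $\bdU=\phi^{-1}(\delta)$ and $\Uc=\phi^{-1}([\delta,\max\phi])$. For $z\in F$ (so $Q(z)=1$) one computes
\[
\phi([z])=\frac{\abs{Q(z)}^2}{\norm{z}^{2n}}=\frac{1}{\norm{z}^{2n}},
\]
which equals $\delta$ precisely when $\norm{z}=r$ and is $\ge\delta$ precisely when $\norm{z}\le r$. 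This shows $\pi(\bdF)\subseteq \bdU$ and $\pi(\Fc)\subseteq \Uc$. For surjectivity in the other direction, given $[z]\in\Uc$, I would rescale $z\mapsto \lambda z$ by one of the $n$ solutions of $\lambda^n=1/Q(z)$; the inequality $\phi([z])\ge\delta$ translates, after rescaling, into $\norm{\lambda z}\le r$, so we land in $\Fc$. The $n$ rescalings are a single $\Z_n$-orbit, and they lie on $\bdF$ precisely when $[z]\in\bdU$.

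Putting it together, the restrictions $\pi\colon \Fc\to \Uc$ and $\pi\colon \bdF\to \bdU$ are surjective, proper, free $\Z_n$-quotient maps, hence regular cyclic $n$-fold covering projections. The main obstacle I foresee is purely bookkeeping: one must verify that the choice of $r$ (for $\Fc,\bdF$) can be made compatible with the choice of $\delta$ (for $\Uc,\bdU$) so that the images truly coincide with the sets defined in \S\ref{subsec:bdry nbhd} and \S\ref{subsec:bdF}, and that different admissible choices produce diffeomorphic (indeed isotopic) covers, so that the statement is independent of the choices. Both follow from the homogeneity of $Q$ and Durfee's theorem on uniqueness of regular neighborhoods up to isotopy.
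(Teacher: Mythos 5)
Your proof is correct and follows essentially the same strategy as the paper's: the key observation in both is that for $z\in F$ (so $Q(z)=1$) the quantity $\phi([z])=\abs{Q(z)}^2/\norm{z}^{2n}=\norm{z}^{-2n}$ converts the radius condition defining $\Fc$ and $\bdF$ into the $\phi$-level condition defining $\Uc$ and $\bdU$, so $\Fc=F\cap\pi^{-1}(\Uc)$ is a saturated subset of the cover $\pi\colon F\to U$. The paper simply fixes $r=\delta=1$ and asserts the resulting map of pairs, whereas you spell out the surjectivity via the rescaling $\lambda^n=1/Q(z)$ and allow general compatible $(r,\delta)$ — more explicit bookkeeping, but the same argument.
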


\begin{proof}
Let $Q=Q(\A)$.   Recall that 
\begin{align}
\Fc&=\{z\in \C^{d+1} \mid \norm{z}\le 1 \text{ and } Q(z)=1\}, \text{ and} \\
\Uc&=\{[z] \in \CP^{2d} \mid \abs{Q(z)}^2 / \norm{z}^{2n}\ge 1\}.\notag
\end{align}

Thus, the map $\pi$ restricts to a map of pairs,
$\pi\colon (\Fc, \bdF) \to (\Uc,\bdU)$.  Hence, the 
cover $\pi\colon F\to U$ restricts to covers 
$\pi\colon \Fc \to \Uc$ 
and $\pi\colon \bdF \to \bdU$, and we are done. 
\end{proof}

Note that the inclusion $\Fc \to F$ is a fiber-preserving 
homotopy equivalence.  
Summarizing, we have a commuting ladder
\begin{equation}
\label{eq:cd}
\xymatrix{
\Z_n \ar@{=}[r] \ar[d] & \Z_n \ar@{=}[r] \ar[d] & \Z_n \ar[r] \ar[d] &
\C^* \ar@{=}[r] \ar[d] & \C^* \ar[d] \\
\bdF \ar^{\pi}[d] \ar[r]&  \Fc \ar^{\pi}[d] \ar^{\simeq}[r]&  F \ar^{\pi}[d] \ar[r]&  
M \ar[r] \ar^{\pi}[d] & \C^{d+1} \setminus\{0\} \ar^{\pi}[d] \\
\bdU  \ar[r]& \Uc  \ar^{\simeq}[r]& U  \ar@{=}[r]&  U \ar[r] & \CP^{d}
}
\end{equation}
where the horizontal arrows are inclusions, and the 
maps denoted by $\pi$ are principal bundles with 
fiber either $\C^*$ or $\Z_n$.

Assume now that $d=2$, and let $\P(\A)=\{\ell_1,\dots, \ell_n\}$ 
be the associated arrangement of lines in $\CP^2$. 
By Proposition \ref{prop:comm}, 
the group $\pi_1(\bdU)$ has generators $x_1,\dots , x_{n-1}$ 
corresponding to the meridians around the first $n-1$ lines, 
and generators $y_1,\dots ,y_s$ corresponding 
to the cycles in the associated graph $\Gamma$. 
 
\begin{prop}
\label{prop:bdf cover}
The $\Z_n$-cover $\pi\colon \bdF \to \bdU$ is classified by the 
homomorphism $\pi_1(\bdU)\surj \Z_n$ given by 
$x_{i} \mapsto 1$ and $y_{i} \mapsto 0$.  
\end{prop}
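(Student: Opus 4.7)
The plan is to realize $\pi\colon \bdF\to\bdU$ as a pullback of the Milnor cover $\pi\colon F\to U$ computed in Theorem \ref{thm:mf}, and then compose with the well-understood inclusion-induced map on $\pi_1$.

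First, I would invoke the commuting ladder \eqref{eq:cd} to observe that the $\Z_n$-cover $\pi\colon \bdF\to\bdU$ is the restriction of $\pi\colon F\to U$ along the composition of the inclusion $\bdU\inj\Uc$ with the deformation retraction $\Uc\simeq U$.  In other words, writing $j\colon\bdU\inj\Uc$ for the boundary inclusion, the covering $\bdF\to\bdU$ is classified by the composite
\[
\pi_1(\bdU)\xrightarrow{\,j_{\sharp}\,}\pi_1(\Uc)=\pi_1(U)\xrightarrow{\,\delta\,}\Z_n,
\]
where $\delta$ is the classifying homomorphism of $\pi\colon F\to U$ provided by Theorem \ref{thm:mf}, applied with trivial multiplicities $m_H=1$ and $N=n$.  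That theorem tells us $\delta(\overline{x}_H)=1$ for every $H\in\A$.

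Next, I would compute this composite on the generators $x_1,\dots,x_{n-1},y_1,\dots,y_s$ of $\pi_1(\bdU)$ furnished by Proposition \ref{prop:comm}.  Since $\delta$ factors through the abelianization, the composite $\delta\circ j_{\sharp}$ is determined by the induced map in first homology, $j_*\colon H_1(\bdU,\Z)\to H_1(\Uc,\Z)$.  By Remark \ref{rem:bdry map}, this map sends $x_i\mapsto \overline{x}_i$ for $i=1,\dots,n-1$ and $y_i\mapsto 0$.  Composing with $\delta$, which sends each meridional class $\overline{x}_i$ to $1\in\Z_n$, yields $x_i\mapsto 1$ and $y_i\mapsto 0$, as asserted.

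There is essentially no hard step here: once the pullback description in the ladder \eqref{eq:cd} is in hand, the statement is a bookkeeping exercise combining Theorem \ref{thm:mf} with Remark \ref{rem:bdry map}.  The only point requiring a little care is to make sure the generating sets match up, specifically that the ``missing'' meridian $x_n$ of $\bdU$ poses no issue: in $H_1(\Uc,\Z)$ one has $\overline{x}_n=-(\overline{x}_1+\cdots+\overline{x}_{n-1})$, so $\delta(\overline{x}_n)=-(n-1)\equiv 1\pmod n$, which is consistent with the rule $x_H\mapsto 1$ for every hyperplane $H$.
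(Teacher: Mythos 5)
Your argument is correct and follows essentially the same route as the paper's own proof: realize $\bdF\to\bdU$ as the pullback of the Milnor cover along the boundary inclusion $j\colon\bdU\inj\Uc$ (the paper cites Lemma \ref{lem:bd cover}, while you read this off the ladder \eqref{eq:cd}, which amounts to the same thing), then compose the classifying homomorphism $\delta$ from Theorem \ref{thm:mf} with the map $j_*$ computed in Remark \ref{rem:bdry map}. The closing consistency check on the omitted meridian $x_n$ is a nice touch but not logically required.
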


\begin{proof}
By Lemma \ref{lem:bd cover}, the cover $\pi\colon \bdF \to \bdU$ 
is the pullback along the inclusion $j\colon \bdU \to \Uc$ of the 
cover $\pi\colon \Fc \to \Uc$. By Theorem \ref{thm:mf}, this latter 
cover is classified by the homomorphism $\pi_1(\Uc)\surj \Z_n$, 
$x_i \mapsto 1$.   By Remark \ref{rem:bdry map}, 
the homomorphism $j_*\colon H_1(\bdU,\Z)\to H_1(\Uc,\Z)$ 
is given by $x_{i} \mapsto x_{i}$ and $y_{i} \mapsto 0$. 
The desired conclusion follows. 
\end{proof}

\begin{example}  
\label{ex:bmf pencil}
Let $\A$ be a pencil of $n+1$ planes in $\C^3$. 
From Example \ref{ex:bdry pencil}, we know that  
$\bdU=\sharp^{n} S^{1}\times S^{2}$.  
Since $\bdF \to \bdU$  is a cover with $n+1$ 
sheets, we find that $\pi_1(\bdF)=F_{n^2}$.  
Hence, by standard $3$-manifold topology, 
$\bdF=\sharp^{n^2} S^{1}\times S^{2}$. 
(See also  \cite[Example 19.10.6]{NS}.)
\end{example} 

\subsection{The characteristic polynomial of the monodromy}
\label{subsec:char poly bdF}

A detailed study of the boundary of the Milnor fiber of a non-isolated 
surface singularity was done by N\'{e}methi and Szil\'{a}rd in \cite{NS}.
When applied to arrangements in $\C^3$, their work yields the following 
result. 

\begin{theorem}[\cite{NS}]  
\label{thm:charpoly}
Let $\A$ be an arrangement of $n$ planes in $\C^3$.  
The characteristic polynomial of the algebraic monodromy 
acting on $H_1(\bdF,\C)$ is given by
\[
\Delta(t)=
\prod_{X\in L_2(\A)} (t-1) (t^{\gcd (\mu(X)+1,n)} -1)^{\mu(X)-1}.
\]
\end{theorem}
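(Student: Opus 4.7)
The plan is to realize $\bdF$ as a regular cyclic cover of $\bdU$ and translate the computation into one about cohomology of $\bdU$ with coefficients in rank-$1$ local systems. By Proposition~\ref{prop:bdf cover}, the Hopf map restricts to a regular $\Z_n$-cover $\pi\colon \bdF \surj \bdU$, classified by $\delta\colon \pi_1(\bdU)\surj \Z_n$, $x_i\mapsto 1$, $y_j\mapsto 0$. Moreover, the Milnor monodromy $h$, being the rotation $z\mapsto e^{2\pi \ii/n}z$, acts on $\bdF$ as a generator of the deck transformation group of this cover. The standard eigenspace decomposition for finite abelian covers (Appendix~\ref{sect:covers}) therefore gives
\begin{equation*}
H_1(\bdF,\C) = \bigoplus_{k=0}^{n-1} H_1(\bdU,\C_{\rho_k}),
\qquad
\Delta(t)=\prod_{k=0}^{n-1} (t-\zeta^k)^{b_{1,k}},
\end{equation*}
where $\zeta = e^{2\pi \ii/n}$, $\rho_k$ is the character $x_i\mapsto \zeta^k$, $y_j\mapsto 1$, $b_{1,k}:=\dim_\C H_1(\bdU,\C_{\rho_k})$, and $h_*$ acts on the $k$-th summand as multiplication by $\zeta^k$. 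The problem thus reduces to computing $b_{1,k}$.

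Next, I would compute the $b_{1,k}$ using the graph-manifold structure of $\bdU$ recalled in \S\ref{subsec:graph manifold}--\S\ref{subsec:pi1} and the description of $\VV^1_*(\bdU)$ implied by Theorem~\ref{thm:alex poly arr}. Under $\rho_k$, a ``line'' vertex $v_i$ has $t_{v_i}=t_i\mapsto \zeta^k$, trivial only when $k=0$, while a ``multiple-point'' vertex $v_J$ corresponding to $X\in L_2(\A)$ with $m_X:=\abs{\A_X}=\abs{J}\ge 3$ has $t_{v_J}\mapsto \zeta^{k m_X}$, trivial precisely when $n/\gcd(m_X,n)\mid k$, i.e. for exactly $\gcd(m_X,n)$ values of $k$. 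By Theorem~\ref{thm:alex poly arr} and Proposition~\ref{prop:v1 3m}, the codimension-$1$ components of $\VV^1_1(\bdU)$ are exactly the subtori $\{t_{v_J}=1\}$, with local depth at a generic character equal to $d_{v_J}-2=m_X-2=\mu(X)-1$. A refinement of this depth computation --- carried out by a Mayer--Vietoris argument along the plumbing graph $\G$, computing each local system contribution on each Seifert piece and summing via the spectral sequence of the plumbing decomposition --- shows that the depths add independently at characters lying on several subtori simultaneously. Hence each flat $X$ with $\mu(X)\ge 2$ contributes the factor $(t^{\gcd(\mu(X)+1,n)}-1)^{\mu(X)-1}$ to $\Delta(t)$.

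Finally, one $(t-1)$ factor per flat remains to be accounted for, including the double points where $\mu(X)=1$; these come from the ``trivial'' part of $H_1(\bdU,\C)$ itself. A direct calculation via the decomposition $H_1(\bdU)\cong H_1(U)\oplus H_2(U)$ yields $b_1(\bdU)=\sum_{X\in L_2(\A)}\mu(X)$, which matches exactly the exponent of $(t-1)$ in the proposed right-hand side once one collects the constant-term contributions of each $(t^{\gcd(\mu(X)+1,n)}-1)^{\mu(X)-1}$ at $t=1$ together with the $|L_2(\A)|$ extra $(t-1)$ factors.

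The main obstacle is the depth refinement in the second step: Theorem~\ref{thm:alex poly arr} identifies the components of $\VV^1_1(\bdU)$ as reduced codimension-$1$ subtori but leaves the higher-depth structure implicit. Tracking the exact multiplicities of $\rho_k$ across several simultaneously satisfied conditions $\zeta^{km_X}=1$ --- compatible whenever several $m_X$ share common divisors with $n$ --- is the delicate combinatorial heart of the argument. An alternative route, which is the one actually taken by N\'emethi and Szil\'ard in \cite{NS}, is to work on an embedded resolution of the central singularity of $Q(\A)$ in $\C^3$ (obtained by first blowing up the origin to get an exceptional $\CP^2$ containing $\P(\A)$, and then blowing up each non-transverse multiple point, producing one exceptional divisor per $X\in L_2(\A)$ with $m_X\ge 3$). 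In that setting $\bdF$ becomes a plumbed graph manifold whose decoration records the local multiplicity $m_X$ on each divisor, and the formula falls out of their general plumbing-graph monodromy calculation, with the $\gcd(m_X,n)$ factors arising naturally as the orders of the deck group acting on the circle bundles over each exceptional divisor.
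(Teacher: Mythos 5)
The paper does not prove this theorem; it imports it wholesale from the reference \cite{NS}, so there is no internal proof against which to compare. Your setup (steps 1--3), which reduces the problem to computing $b_{1,k}=\dim_\C H_1(\bdU,\C_{\rho_k})$ via Proposition~\ref{prop:bdf cover} and the eigenspace decomposition of Theorems~\ref{thm:betti cover}--\ref{thm:alg mono}, is correct and is a genuinely different route from the one the citation points to. If completed, it would yield a new, ``internal'' proof of the formula using only machinery the paper already develops (the graph-manifold structure and the Alexander polynomial of $\bdU$), whereas N\'emethi--Szil\'ard work directly on the plumbing graph of an embedded resolution in $\C^3$.

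However, there is a genuine gap, and you have honestly flagged it without closing it. What you actually need is the \emph{dimension} formula
\[
\dim_\C H_1(\bdU,\C_\rho)\;=\!\!\sum_{\substack{v\in\sV(\G)\\ d_v\ge 3,\ \rho(t_v)=1}}\!\!(d_v-2)
\qquad(\rho\neq 1),
\]
evaluated at $\rho=\rho_k$. The results you cite do not supply it. Theorem~\ref{thm:alex poly arr} computes the Alexander polynomial $\Delta_{\bdU}$, and Proposition~\ref{prop:v1 3m} upgrades its zero locus to $\VV^1_1(\bdU)\setminus\{1\}$ --- a depth-$1$ statement, telling you only \emph{where} $\dim H^1\ge 1$. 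It says nothing about the higher-depth loci $\VV^1_s(\bdU)$ for $s\ge 2$, which is exactly what controls the multiplicity $b_{1,k}$ in formula~\eqref{eq:hcov}. Your claim that ``the local depth at a generic character equals $d_{v_J}-2$'' and that ``depths add independently'' for characters lying on several of the subtori $\{t_v=1\}$ simultaneously is precisely the missing theorem; the proposed Mayer--Vietoris argument over the plumbing pieces is sketched in one sentence but not carried out, and that argument is nontrivial (one must control the connecting maps along the gluing tori, not just sum $H^1$ of the Seifert pieces). As a smaller point, the codimension-$1$ components of $\VV^1_1(\bdU)$ are \emph{not} exactly the point-vertex subtori $\{t_{v_J}=1\}$: the product in Theorem~\ref{thm:alex poly arr} also runs over the line vertices $v_i$ with $d_{v_i}\ge 3$, contributing subtori $\{t_i=1\}$. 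This happens not to affect the nontrivial characters $\rho_k$ (since $\rho_k(t_i)=\zeta^k\neq 1$ for $k\neq 0$), but it should be stated, since those subtori are needed to account for the eigenvalue-$1$ block being all of $H_1(\bdU,\C)=\C^{\sum\mu(X)}$, which you instead compute directly.

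In short: the reduction is right and internally consistent with the formula (degrees and the $t=1$ multiplicity both check out against $b_1(\bdF)$ and $b_1(\bdU)$), but the key computation of $\dim H_1(\bdU,\C_{\rho_k})$ for $\rho_k\neq 1$ is asserted, not proved, and does not follow from the lemmas you invoke.
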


In particular, the first Betti number of $\bdF$ is determined solely 
by the M\"obius function of $L(\A)$:
\begin{equation}
\label{eq:b1delmf}
b_1(\bdF)= \sum_{X\in L_2(\A)}  (1+(\mu(X)-1)\gcd (\mu(X)+1,n)).
\end{equation}

This shows that the first Betti number of the boundary of the 
Milnor fiber is a much less subtle invariant than $b_1(F)$, in 
that $b_1(\bdF)$ depends only on the number and type of 
multiple points of $\P(\A)$, but not on their relative position. 
The next example illustrates this phenomenon.

\begin{example}
\label{ex:pappus bis}
Let $\A_1$ and $\A_2$ be the two arrangements from 
Example \ref{ex:pappus}.  Recall that the characteristic 
polynomials of the monodromy operators acting on 
the first homology of the Milnor fibers of these arrangements are 
$\Delta_1(t)=(t-1)^8(t^2+t+1)$ and $\Delta_2(t)=(t-1)^8$, 
respectively. On the other hand, the monodromy operators 
acting on the first homology of the boundaries of the 
respective Milnor fibers share the same characteristic 
polynomial, namely, 
$\Delta(t)=(t-1)^{27}(t^2+t+1)^9$. 
\end{example}

Nevertheless, there are pairs of arrangements for 
which the Milnor fibers have the same first Betti number, 
but the boundaries of the Milnor fibers have different first 
Betti numbers. Let us illustrate this claim with a simple example.

\begin{example}
\label{ex:4planes}
Let $\A_1$ be an arrangement of $4$ planes in general 
position and let $\A_2$ be a near-pencil of $4$ planes.  
Let $\Fc_1$ and $\Fc_2$ be the respective closed Milnor fibers.  
The monodromy operators acting on $H_1(\Fc_1, \C)$ and 
$H_1(\Fc_2, \C)$  have characteristic polynomial $\Delta(t)=(t-1)^3$, 
yet the monodromy operators acting on $H_1(\bdF_1, \C)$ and 
$H_1(\bdF_2, \C)$  have characteristic polynomials equal to 
$\Delta_1(t)=(t-1)^{6}$ and $\Delta_2(t)=(t-1)^{5}$, respectively. 
\end{example}

\subsection{The formality question}
\label{subsec:formal bd mf}

Next, we determine which Milnor fibers of plane arrangements 
have formal boundaries.

\begin{prop}
\label{prop:bdf formal}
Let $\A$ be an arrangement of planes in $\C^3$, and let $\bdF$ 
be the boundary of its Milnor fiber. The following are equivalent:
\begin{enumerate}
\item  \label{bf1} The manifold $\bdF$ is formal. 
\item  \label{bf2} $\A$ is either a pencil or a near-pencil. 
\item  \label{bf3} $\bdF$ is either $\sharp^{n^2} S^1\times S^2$ or 
$S^1\times \Sigma_{n-1}$, where $n=\abs{\A}-1$.
\end{enumerate}
\end{prop}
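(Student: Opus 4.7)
My approach will parallel the proof of Theorem \ref{thm:bdry}, establishing the equivalences by the cycle $(1) \Rightarrow (2) \Rightarrow (3) \Rightarrow (1)$, with the substantive work concentrated in the first implication. The implication $(2) \Rightarrow (3)$ is a direct citation of the explicit diffeomorphism types worked out in Examples \ref{ex:bmf pencil} and \ref{ex:bd mf near-pencil}: if $\A$ has $n+1$ hyperplanes and forms a pencil, $\bdF = \sharp^{n^2} S^1 \times S^2$, while if $\A$ is a near-pencil, $\bdF = S^1 \times \Sigma_{n-1}$. For $(3) \Rightarrow (1)$, I verify formality of each model manifold: $S^1 \times \Sigma_{n-1}$ is formal as a Cartesian product of formal (indeed, K\"ahler) spaces, while $\sharp^{n^2} S^1 \times S^2$ is a closed orientable $3$-manifold with free fundamental group $F_{n^2}$, whose rational homotopy type admits a formal minimal model (only degrees $0,1,2,3$ contribute, and the Poincar\'e pairing $H^1\otimes H^2\to H^3$ leaves no room for nontrivial Massey products).

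The substantive direction is $(1) \Rightarrow (2)$, which I argue by contrapositive: if $\A$ is neither a pencil nor a near-pencil, I will show that $\bdF$ fails to be $1$-formal by detecting a failure of the tangent cone formula of Theorem \ref{thm:tcone}. By Proposition \ref{prop:bdf cover}, the restricted Hopf projection $\pi \colon \bdF \to \bdU$ is a regular $\Z_n$-cover, so $\pi^* \colon H^1(\bdU, \C) \hookrightarrow H^1(\bdF, \C)$ is injective and, by naturality (Proposition \ref{prop:jumpcov}), carries the jump loci of $\bdU$ into those of $\bdF$. Under the hypothesis on $\A$, formula \eqref{eq:r1 bdry} gives $\RR^1_1(\bdU) = H^1(\bdU, \C)$, whence $\pi^*(H^1(\bdU, \C)) \subseteq \RR^1_1(\bdF)$; simultaneously, formula \eqref{eq:v1 bdry} presents $\VV^1_1(\bdU)$ as a finite union of codimension-one subtori in the character torus of $\bdU$.

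The hard part, and the main obstacle of the proof, is to verify that $\TC_1(\VV^1_1(\bdF)) \cap \pi^*(H^1(\bdU, \C))$ remains a \emph{proper} subvariety of $\pi^*(H^1(\bdU, \C))$; equivalently, that no new positive-dimensional component of $\VV^1_1(\bdF)$ through $\mathbf{1}$ fills up the invariant subspace. I plan to control this by decomposing $H^1(\bdF, \C)$ into isotypic components under the $\Z_n$ deck action (via the machinery of Appendix \ref{sect:covers}), identifying $\pi^*(H^1(\bdU, \C))$ as the trivial-character component, and exploiting the explicit graph-manifold description of $\bdF$ from N\'emethi--Szil\'ard to argue that all positive-dimensional components of $\VV^1_1(\bdF)$ are torsion-translated subtori of positive codimension in the respective isotypic slices. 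Once this is in hand, the strict containment $\TC_1(\VV^1_1(\bdF)) \cap \pi^*(H^1(\bdU, \C)) \subsetneq \pi^*(H^1(\bdU, \C)) \subseteq \RR^1_1(\bdF)$ violates the tangent cone formula for $\bdF$, ruling out its $1$-formality (and hence its formality) and completing the proof.
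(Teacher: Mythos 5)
Your handling of $(2)\Rightarrow(3)\Rightarrow(1)$ matches the paper, which simply cites Examples~\ref{ex:bmf pencil} and~\ref{ex:bd mf near-pencil} and observes that the listed model manifolds are formal. The problem is the implication $(1)\Rightarrow(2)$, where your proposal contains a genuine gap: you reduce everything to the claim that $\TC_1(\VV^1_1(\bdF))\cap\pi^*(H^1(\bdU,\C))$ is a proper subvariety of $\pi^*(H^1(\bdU,\C))$, and this is precisely the ``hard part'' you say you \emph{plan} to establish but never actually prove. Controlling the components of $\VV^1_1(\bdF)$ through $\mathbf{1}$ requires a detailed description of the characteristic variety of the cover $\bdF$, which the paper treats as an open problem (see Problem~\ref{quest:coho bdf}); it is not something that can be waved through by citing Proposition~\ref{prop:jumpcov}, Theorem~\ref{thm:charpoly}, or the isotypic decomposition from Appendix~\ref{sect:covers}. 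Those tools give you the Betti numbers and the monodromy eigenvalues, but they do not by themselves locate the positive-dimensional components of $\VV^1_1(\bdF)$ and certainly do not rule out that one of them dominates the trivial isotypic piece. As written, the proof is incomplete.

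The paper sidesteps all of this with a short transfer argument. Lemma~\ref{lem:kfc} states that if $p\colon Y\to X$ is a finite regular cover and $Y$ is $k$-formal, then $X$ is $k$-formal. Taking $Y=\bdF$ and $X=\bdU$ (using Lemma~\ref{lem:bd cover} and Proposition~\ref{prop:bdf cover} to see that $\pi\colon\bdF\to\bdU$ is a regular $\Z_n$-cover), the contrapositive says: if $\bdU$ is not formal, then $\bdF$ is not formal. But non-formality of $\bdU$ when $\A$ is neither a pencil nor a near-pencil is exactly what Theorem~\ref{thm:bdry} establishes, and it does so precisely by the tangent-cone-formula computation you were trying to redo on $\bdF$ — only there the data \eqref{eq:r1 bdry} and \eqref{eq:v1 bdry} are fully known. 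In short, the correct place to detect the tangent-cone failure is the base $\bdU$, not the cover $\bdF$, and Lemma~\ref{lem:kfc} does the rest. Your plan would be redoing the hard computation at the level of the cover, where the required jump-locus data is unavailable.
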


\begin{proof}
Implication \eqref{bf2} $\Rightarrow$ \eqref{bf3} was 
discussed in Examples \ref{ex:bmf pencil} and 
\ref{ex:bd mf near-pencil}, while implication 
\eqref{bf3} $\Rightarrow$ \eqref{bf1} is obvious. 

To prove \eqref{bf1} $\Rightarrow$ \eqref{bf2}, suppose 
$\A$ is neither a pencil, nor a near-pencil.  By 
Theorem \ref{thm:bdry}, then, the boundary 
manifold $\bdU$ is not formal.  
On the other hand, we know from Lemma \ref{lem:bd cover} 
that $\bdF$ is a regular, cyclic $n$-fold cover of $\bdU$. 
Hence, by Lemma \ref{lem:kfc}, the manifold $\bdF$ is not formal. 
\end{proof}

\subsection{Further considerations}
\label{subsec:h1 bdF}
In \cite[\S 24]{NS}, N\'{e}methi and A.~Szil\'{a}rd list a 
number of open problems regarding the boundary of the 
Milnor fiber of a non-isolated surface singularity.  
Problem 24.4.19 on the list reads as follows.

\begin{problem}
\label{quest:h1bdf}
Find a nice formula for the torsion of $H_1(\bdF,\Z)$. 
\end{problem}

The next example shows that torsion can indeed 
occur in the first homology of $\bdF$ (compare 
with \cite[Example 19.10.9]{NS}). 

\begin{example}  
\label{ex:gen 4}
Let $\A$ be a general position arrangement of $4$ planes 
in $\C^3$. As noted above, the characteristic polynomial 
of the algebraic monodromy acting on $H_1(\bdF,\C)$ is 
given by $\Delta(t)=(t-1)^{6}$.  Direct computation shows 
that, in fact, $H_1(\bdF, \Z)=\Z^{6} \oplus \Z_4$. 
\end{example}

For a generic arrangement of $n$ planes in $\C^3$, 
we expect that 
\begin{equation}
\label{eq:h1bdf gen}
H_1(\bdF, \Z)=\Z^{n(n-1)/2} \oplus \Z_{n}^{(n-2)(n-3)/2}.
\end{equation}
For an arbitrary arrangement $\A$, it would be interesting 
to see whether all the torsion in $H_1(\bdF(\A), \Z)$ 
consists of $\Z_n$-summands, where $n=\abs{\A}$. 

The next problem summarizes  Problems 24.2.1 
and 24.2.2 from \cite{NS}. 

\begin{problem}
\label{quest:coho bdf}
Determine the cohomology ring and the resonance varieties of 
the boundary of the Milnor fiber of an arrangement. 
\end{problem}

Let $\A$ be an arrangement.  
From Lemma \ref{lem:bd cover}, we know 
that the Hopf fibration restricts to a 
a regular, finite cover,  $\pi\colon \bdF \to \bdU$. 
Proposition \ref{prop:jumpcov}, then, insures that  
$\pi^*(\V^q_s(\bdU, \k))\subseteq \V^q_s(\bdF, \k)$ and  
$\pi^*(\RR^q_s(\bdU, \k))\subseteq \RR^q_s(\bdF, \k)$, 
for all $q\ge 0$ and $s\ge 1$.  

If $\A$ is a near-pencil of planes in $\C^3$, then all these inclusions are, 
in fact, equalities. In general, though, these inclusions are strict. 
For instance, if $\A$ is a pencil of $n+1$ planes in $\C^3$,  
and $n\ge 2$, then $\RR^1_1(\bdU, \k)=H^1(\bdU,\k)=\k^n$, 
whereas  $\RR^1_1(\bdF, \k)=H^1(\bdF,\k)=\k^{n^2}$. 

\appendix

\section{Cohomology jumping loci} 
\label{sect:cvs}

\subsection{Characteristic varieties}
\label{subsec:char var}
Let $X$ be a connected CW-complex.  Without loss of 
generality, we may assume $X$ has a single $0$-cell.  
Let $G=\pi_1(X,x_0)$ be the fundamental group of $X$, 
based at this $0$-cell.  We will assume throughout that $X$ 
has finitely many $1$-cells.   Of course, these cells must be 
attached at the unique $0$-cell $x_0$.  By cellular approximation, 
the based homotopy classes of the $1$-cells generate the 
fundamental group; hence, $G$ is a finitely generated group.
  
Now let $\k$ be an algebraically closed field, 
and let $\widehat{G}=\Hom(G,\k^*)=H^1(X,\k^*)$ be the affine 
algebraic group of $\k$-valued, multiplicative characters on $G$.  
The {\em characteristic varieties}\/ of $X$ over  
$\k$ are the jumping loci for homology with coefficients 
in $\k$-valued, rank-$1$ local systems on $X$:
\begin{equation}
\label{eq:cvs}
\V^q_s(X,\k)=\{\rho\in\Hom(G,\k^*) \mid  \dim_{\k} H_q(X,\k_\rho)\ge s\}.
\end{equation}

As long as $X$ has finite $q$-skeleton, 
these loci are Zariski closed subsets of $\widehat{G}$;   
moreover, we have a descending filtration 
\begin{equation}
\label{eq:filt}
\widehat{G}=\V^q_0(X,\k)\supseteq \V^q_1(X,\k)\supseteq\cdots
\supseteq\V^q_s(X,\k)\supseteq\cdots.
\end{equation}

As shown in \cite{PS-plms}, the depth-$1$ characteristic 
varieties $\V^q(X,\k)=\V^q_1(X,\k)$ behave well under 
direct products. More precisely, suppose $X_1$ and $X_2$ 
are two connected, finite-type CW-complexes, with fundamental 
groups $G_1$ and $G_2$.   Identify the character 
group of $\pi_1(X_1\times X_2)$ with 
$\widehat{G_1}\times \widehat{G_2}$.
Then, for all $q\ge 0$,  
\begin{equation}
\label{eq:cvprod}
\V^q(X_1\times X_2,\k)=\bigcup_{i=0}^{q} 
\V^i(X_1,\k)\times \V^{q-i}(X_2,\k).
\end{equation}

\subsection{Resonance varieties}
\label{subsec:rv}
 
Let $A=H^* (X,\k)$ be the cohomology algebra of $X$. 
If $H_1(X,\Z)$ has $2$-torsion, assume that $\ch(\k)\ne 2$.  
Then, for each $a\in A^1$, we have $a^2=0$, by graded-commutativity 
of the cup product. Thus, left-multiplication by $a$ defines a 
cochain complex,
\begin{equation}
\label{eq:aomoto}
\xymatrix{(A , \cdot a)\colon  \ 
A^0\ar^(.66){a}[r] & A^1\ar^{a}[r] & A^2  \ar[r]& \cdots},
\end{equation}

The jump loci for the cohomology of this complex define a 
natural filtration of the affine space $A^1=H^1(X,\k)$.  
The {\em resonance varieties}\/ of $X$ are the sets 
\begin{equation}
\label{eq:rvs}
\RR^q_s(X,\k)=\{a \in A^1 \mid \dim_{\k} 
H^q(A,a) \ge  s\}. 
\end{equation}
  
If $X$ has finite $q$-skeleton, the sets $\RR^q_s(X,\k)$ 
form a descending filtration of $A^1$ 
by homogeneous, Zariski closed subsets of $A^1$.  
Note that, if $A^q=0$, then $\RR^q_s(X,\k)=\emptyset$, 
for all $s>0$. In degree $0$, we have $\RR^0_1(X,\k)= \{ 0\}$,
and $\RR^0_s(X,\k)= \emptyset$, for $s>1$. 

The resonance varieties respect field extensions:  
if $\k\subseteq \mathbb{K}$, then
$\RR^q_s(X,\k)=\RR^q_s(X,\mathbb{K}) \cap H^1(X,\k)$.
Furthermore, as noted in \cite{PS-plms}, 
the depth-$1$ resonance varieties  
$\RR^q(X,\k)=\RR^q_1(X,\k)$ behave well under 
direct products:
\begin{equation}
\label{eq:resprod}
\RR^q(X_1\times X_2,\k)=\bigcup_{i=0}^{q} 
\RR^i(X_1,\k)\times \RR^{q-i}(X_2,\k).
\end{equation} 

\subsection{Jump loci in degree $1$}
\label{subsec:fox}

Given a finitely generated group  $G$,  
we may define its degree $1$ characteristic 
and resonance varieties as those of a 
presentation $2$-complex for the group.  It is readily 
checked that this definition does not depend on a 
choice of presentation for $G$.

If the group admits a finite presentation,  
$G=\langle x_1,\dots ,x_{n}\mid r_1,\dots ,r_m\rangle$,  
we may compute the sets $\V^1_s(G,\k)$ and 
$\RR^1_s(G,\k)$ directly from the presentation, 
by means of the Fox calculus.  The algorithm goes 
as follows.

Let $F_{n}$ be the free group with generators 
$x_1,\dots,x_{n}$,  and let $\epsilon\colon \k F_n\to \k$  
be the augmentation map, given by $\epsilon(x_i)=1$.   
For each $1\le j \le n$, there is a linear operator 
$\partial_j=\partial/\partial x_j \colon \k{F_{n}}\to \k{F_{n}}$,
known as the $j$-th Fox derivative, uniquely defined  
by the following rules: 
$\partial_j (1)=0$,
$\partial_j (x_i)=\delta_{ij}$, and
$\partial_j (uv)=\partial_j (u) \epsilon(v)+
u\partial_j (v)$.  

Now let $\alpha \colon \k{F_n}\to \k{G_{\ab}}$ be the ring 
morphism obtained by composing the abelianization 
map $\ab\colon G\to G_{\ab}$ with the presentation  
homomorphism $\phi\colon F_n\to G$, and extending 
linearly to group rings. We then define 
the Alexander matrix of the given presentation  
as the $m$ by $n$ matrix $\Phi=\Phi_G$ with entries 
\begin{equation}
\label{eq:phi}
\Phi_{ij}=\alpha(\partial_j r_i)
\end{equation}
in the group algebra $\k{G_{\ab}}$.
Note that $\k{G_{\ab}}$ is the coordinate ring of the 
algebraic group $\widehat{G}=\Hom(G, \k^*)$.  
The variety $\VV^1_s(G,\k)$, then, 
is the zero locus of the codimension~$s$ minors 
of $\Phi$, at least away from the trivial character, 
see \cite{Hi97, PS-plms}.  

A somewhat similar interpretation of the resonance varieties 
$\RR^1_s(G,\k)$ was given in \cite{MS00}, at least in the 
case when $G$ admits a commutator-relators presentation.  
More precisely, if all the relators $r_i$ belong to 
$[F_n,F_n]$, we may define the linearized Alexander matrix, 
$\Phi^{\operatorname{lin}}=\Phi^{\operatorname{lin}}_G$, 
as the $m$ by $n$ matrix with entries 
\begin{equation}
\label{eq:philin}
\Phi^{\operatorname{lin}}_{ij} = 
\sum_{k=1}^{n}\epsilon(\partial_k\partial_j r_i) y_k
\end{equation}
in the polynomial ring $S=\k[y_1,\dots, y_n]$.  
Of course, $S$ may be viewed as the coordinate ring of the 
affine space $H^1(G, \k)=\k^n$.  The variety $\RR^1_s(G,\k)$, 
then, is the zero locus of the codimension~$s$ minors 
of $\Phi^{\operatorname{lin}}$.  For instance,  if 
$n>1$, then $\RR^1_s(F_n,\k)=\k^n$, for all $s<n$.

\subsection{A naturality property}
\label{subset:natural}

Every group homomorphism $\varphi\colon G\to Q$ 
induces a morphism between character groups, 
$\hat\varphi\colon \widehat{Q} \to \widehat{G}$, 
given by $\hat\varphi (\rho)(g)=\varphi(\rho(g))$.     
 Likewise, $\varphi$ induces a homomorphism 
in cohomology, $\varphi^* \colon H^*(Q,\k)\to H^*(G,\Q)$.  
Clearly, if $\varphi$ is surjective, then both $\hat\varphi$ 
and $\varphi^1$ are injective.

The next proposition describes a nice functoriality 
property enjoyed by the characteristic and resonance 
varieties of groups.  The proposition extends results from 
\cite{PS-mathann, Su12}; the first part of the proof is modeled 
on the proof of \cite[Lemma 2.13]{Su12}, while the 
second part of the proof is modeled on the proof 
of \cite[Lemma 5.1]{PS-mathann}.

\begin{prop}
\label{prop:epi cv}
Let $G$ be a finitely generated group, and let $\varphi\colon G \surj Q$ 
be a surjective homomorphism.  Then, for each ground field $\k$, and 
each $s\ge 1$, the following hold.
\begin{enumerate}
\item \label{epi1}
The induced morphism 
between character groups, $\hat{\varphi} \colon 
\widehat{Q} \inj \widehat{G}$, restricts to an embedding 
$\V^1_s(Q,\k) \inj \V^1_s(G,\k)$. 
\item \label{epi2}
The induced morphism 
between cohomology groups, $\varphi^* \colon 
H^1(Q,\k)\inj H^1(G,\k)$, restricts to an embedding 
$\RR^1_s(Q,\k) \inj \RR^1_s(G,\k)$.
\end{enumerate}
\end{prop}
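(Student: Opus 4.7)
The plan is to handle the two parts separately but in parallel spirit: in each case, surjectivity of $\varphi$ forces a comparison inequality between the relevant dimension on $Q$ and on $G$, which is exactly the inclusion of jump loci asserted. Throughout, write $K=\ker(\varphi)$ and use the short exact sequence $1\to K\to G\xrightarrow{\varphi} Q\to 1$. Note that $\hat{\varphi}\colon \widehat{Q}\inj \widehat{G}$ and $\varphi^*\colon H^1(Q,\k)\inj H^1(G,\k)$ are injections because $\varphi$ induces a surjection on abelianizations.

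For part \ref{epi1}, I would fix $\rho\in \widehat{Q}$ and consider the $G$-module $M=\k_{\rho\circ\varphi}$. Because $K\subseteq \ker(\rho\circ\varphi)$, the subgroup $K$ acts trivially on $M$, so $M_K=\k_\rho$ as a $Q$-module. Applying the Lyndon--Hochschild--Serre spectral sequence for the extension with coefficients in $M$, the five-term exact sequence in low-degree homology reads
\[
H_2(G,M)\to H_2(Q,\k_\rho)\to H_0(Q,H_1(K,M))\to H_1(G,M)\to H_1(Q,\k_\rho)\to 0.
\]
The key conclusion is the surjection $H_1(G,\k_{\rho\circ\varphi})\surj H_1(Q,\k_\rho)$, which immediately gives $\dim_\k H_1(G,\k_{\rho\circ\varphi})\ge \dim_\k H_1(Q,\k_\rho)$. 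Hence if $\rho\in \V^1_s(Q,\k)$, then $\hat{\varphi}(\rho)\in \V^1_s(G,\k)$, proving \ref{epi1}. If $G$ is only finitely generated, I would replace group cohomology of $G$ with the cohomology of its presentation $2$-complex; the degree-$1$ piece of the spectral sequence argument goes through unchanged since everything we need lives in $H_0$ and $H_1$.

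For part \ref{epi2}, I would argue purely algebraically on the Aomoto-type complexes $(H^*(Q,\k),\cdot a)$ and $(H^*(G,\k),\cdot b)$, where $b=\varphi^*(a)$. Since $\varphi^*$ is a ring homomorphism, it is a chain map between these two cochain complexes. In degree $0$ both complexes start with $\k$, and $\varphi^*$ is the identity there; in degree $1$ we have already observed that $\varphi^*$ is injective. Hence $\varphi^*$ carries $\ker(\cdot a\colon H^1(Q)\to H^2(Q))$ injectively into $\ker(\cdot b\colon H^1(G)\to H^2(G))$, while the image $\mathrm{im}(\cdot b\colon H^0(G)\to H^1(G))=\k\cdot b$ equals $\varphi^*(\mathrm{im}(\cdot a))$ because $\varphi^*(1)=1$. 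Passing to the quotient, $\varphi^*$ induces an injection
\[
H^1(H^*(Q,\k),\cdot a)\inj H^1(H^*(G,\k),\cdot b),
\]
yielding the dimension inequality and hence the embedding $\RR^1_s(Q,\k)\inj \RR^1_s(G,\k)$.

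The main obstacle will be the bookkeeping for part \ref{epi1}: one must verify carefully that the $Q$-action on $M_K$ is correctly identified with $\k_\rho$, and that the spectral sequence is available in the finitely generated (not necessarily finitely presented) setting by passing to classifying spaces. Part \ref{epi2}, by contrast, is essentially formal once one recognizes that $\varphi^*$ is a chain map of Aomoto complexes and is injective in the two relevant degrees; the only mildly delicate point is noting that $\mathrm{im}(\cdot b)$ is already contained in $\varphi^*(H^1(Q,\k))$, so no contribution to the denominator is lost in the comparison.
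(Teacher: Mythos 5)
Both parts of your argument are correct and follow essentially the same approach as the paper. Part \ref{epi1} is identical (five-term exact sequence for the extension $1\to K\to G\to Q\to 1$, identifying the coinvariants $M_K$ with $\k_\rho$), and part \ref{epi2} uses the same essential ingredients — injectivity of $\varphi^*$ in degree one together with its compatibility with cup products — but packages them more cleanly as a chain map of Aomoto complexes inducing an injection on $H^1$, whereas the paper argues by choosing explicit linearly independent classes $b_1,\dots,b_s\in\ker(\cdot a)$ and transporting them along $\varphi^*$.
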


\begin{proof}
For the first part, let $\rho\colon Q\to \k^{*}$ be a character. 
The $5$-term exact sequence associated to the extension 
$1\to K \to G \to Q \to 1$ and the $\k{G}$-module 
$M=\k_{\rho \circ \varphi}$ ends in $H_1(G,M) \to H_1(Q,M_K)\to 0$, 
where $M_K=M/\{g m- m\mid  m\in M, g \in K\}$ is 
the module of coinvariants under the action of $K$. 
Clearly, $M_K$ is isomorphic as a $\k{Q}$-module to  
$\k_{\rho}$.  Hence, $\dim_{\k} H_1(G,\k_{\rho \circ \varphi})$ 
is bounded below by $\dim_{\k} H_1(Q,\k_{\rho})$.
Thus, if $\rho\in \VV^1_s(Q,\k)$, then $\hat\varphi(\rho) \in \VV^1_s(G,\k)$, 
and we are done.

For the second part, consider the commuting diagram
\begin{equation}
\label{eq:cup cd}
\xymatrixcolsep{30pt}
\xymatrix{
H^1(Q,\k) \wedge H^1(Q,\k)  \ar[r]^(.6){\mu_Q} 
\ar@{^{(}->}[d]^{\varphi^1\wedge \varphi^1} 
& H^2(Q,\k)  \ar[d]^{\varphi^2} \\
H^1(G,\k) \wedge H^1(G,\k)  \ar[r]^(.6){\mu_G} & H^2(G,\k) 
},
\end{equation}
where $\mu_Q$ and $\mu_G$ are the respective cup-product maps. 
Let $a$ be a nonzero element in $A^1=H^1(Q,\k)$.  By definition, 
$a$ belongs to $\RR^1_s(Q,\k)$ if there exist linearly 
independent elements $b_1,\dots, b_s\in A^1$ such that 
$a\wedge b_i\ne 0$ in $A^1\wedge A^1$, yet $\mu_Q(a,b_i)=0$.  
Clearly, $\varphi^1(b_1),\dots, \varphi^1(b_s)$ are linearly independent 
in $H^1(G,\k)$  and $\varphi^1(a)\wedge \varphi^1(b_i) \ne 0$, by injectivity 
of $\varphi^1$.  Moreover, $\mu_G(\varphi^1(a)\wedge \varphi^1(b_i))=0$, 
by commutativity of the diagram.  Hence, $\varphi^1(a)$ 
belongs to $\RR^1_s(G,\k)$. 
\end{proof}

\subsection{Alexander polynomial}
\label{subsec:alex}
As before, let 
$G=\langle x_1,\dots ,x_{n}\mid r_1,\dots ,r_m\rangle$
be a finitely presented group. 
For simplicity, we will assume $G_{\ab}$ is torsion-free 
(otherwise, we need to mod out its torsion subgroup), 
and will fix the coefficient field $\k=\C$. 

The {\em Alexander polynomial}\/ of the group $G$, denoted 
$\Delta_G$, is the greatest common divisor of the minors of 
size $n-1$ of the Alexander matrix $\Phi_G$.  As such, it is 
an element in the ring of Laurent polynomials 
$\C{G}_{\ab}$, well-defined up to units.

The Alexander polynomial defines a hypersurface, $V(\Delta_G)$, 
in the complex algebraic torus  $\widehat{G}$.  
As shown in \cite[Corollary 3.2]{DPS-imrn}, this hypersurface can be 
recovered from the characteristic variety $\VV_1(G)=\VV^1_1(G,\C)$.  
More precisely, either $\Delta_G=0$, or 
\begin{equation}
\label{eq:cva}
\cv (G)\setminus\set{1}= V(\Delta_G) \setminus\set{1}, 
\end{equation}
where $\cv (G)$ denotes the union of all codimension-one 
irreducible components of $\VV_1(G)$.
If $G$ is a $3$-manifold group, more can be said.

\begin{prop}[\cite{DPS-imrn}]
\label{prop:v1 3m}
Let $M$ be a compact, connected, orientable $3$-manifold 
without boundary.   Let $G=\pi_1(M)$, and suppose $G_{\ab}$ 
is torsion-free.  Then 
\begin{equation}
\label{eq:cva bis}
\VV_1(G)  \setminus\set{1} = V(\Delta_G) \setminus\set{1}.
\end{equation}
\end{prop}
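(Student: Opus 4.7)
Because $G_{\ab}$ is torsion-free, $\Lambda := \C[G_{\ab}]$ is the Laurent polynomial ring $\C[t_1^{\pm 1},\ldots,t_n^{\pm 1}]$, where $n = b_1(M)$, and $\widehat{G} = \Spec \Lambda = (\C^*)^n$. The general identity \eqref{eq:cva} gives $\cv(G)\setminus\{1\} = V(\Delta_G)\setminus\{1\}$ and, since by definition $\cv(G) \subseteq \VV_1(G)$, one inclusion is free:
\[
V(\Delta_G)\setminus\{1\} \;\subseteq\; \VV_1(G)\setminus\{1\}.
\]
For the reverse inclusion, $V(\Delta_G)$ already contains every codimension-one component of $\VV_1(G)\setminus\{1\}$, so it suffices to prove the purity statement that $\VV_1(G)\setminus\{1\}$ has no irreducible components of codimension $\geq 2$ in $\widehat{G}$.

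\textbf{Chain-level analysis.} The plan is to extract purity from Poincaré duality at the chain level of the universal abelian cover $p\colon \widetilde M \to M$. Choose a CW structure on $M$ with a single $0$-cell and a single $3$-cell; then the equivariant cellular complex has the shape
\[
C_*(\widetilde M;\C)\colon\quad \Lambda \xrightarrow{\partial_3} \Lambda^{k_2} \xrightarrow{\partial_2} \Lambda^{k_1} \xrightarrow{\partial_1} \Lambda,
\]
and $\chi(M)=0$ forces $k_1=k_2=:k$. For any character $\rho\ne 1$, the vanishing $H_0(M,\C_\rho)=0$ is elementary, while $H_3(M,\C_\rho)=0$ follows from Poincaré duality (for closed orientable $3$-manifolds) and the fact that $\rho^{-1}\ne 1$. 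Hence $\partial_1\otimes\rho$ is surjective and $\partial_3\otimes\rho$ is injective, and a rank count yields
\[
\dim H_1(M,\C_\rho) \;=\; (k-1) - \rank(\partial_2\otimes\rho).
\]
Therefore $\rho \in \VV_1(G)\setminus\{1\}$ if and only if all $(k-1)\times(k-1)$ minors of $\partial_2$ vanish at $\rho$. Writing $E_1(\partial_2)$ for the ideal generated by those minors, this reads
\[
\VV_1(G)\setminus\{1\} \;=\; V(E_1(\partial_2))\setminus\{1\}.
\]

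\textbf{Reduction to purity.} After elementary operations over $\Lambda$, the matrix $\partial_2$ is a presentation matrix for the Alexander module of $G$ coming from a finite presentation of $G$ (obtained by collapsing the $3$-cell); consequently $\Delta_G$ is the gcd of the generators of $E_1(\partial_2)$. Since $\Lambda$ is a UFD, one has a general decomposition
\[
V(E_1(\partial_2)) \;=\; V(\Delta_G) \;\cup\; Z,
\]
where $Z$ is a closed subvariety of codimension $\geq 2$ in $\widehat{G}$. The proposition is therefore equivalent to the statement $Z\subseteq\{1\}$. This purity is where one invokes Poincaré duality as a chain equivalence $C_*(\widetilde M;\C)\simeq \overline{\Hom_\Lambda(C_{3-*},\Lambda)}$, with bar denoting the involution $t_i\mapsto t_i^{-1}$; at the middle differential it amounts to a self-conjugacy $\partial_2 \sim \overline{\partial_2}^{\,T}$. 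Combined with the square shape $k_1=k_2$, this self-duality forces the height $1$ part of the Fitting ideal $E_1(\partial_2)$ to already cut out its whole support off the trivial character, excluding codimension $\geq 2$ components.

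\textbf{Main obstacle.} Paragraphs 1 and 2 are routine, but the last step — promoting the chain-level Poincaré self-duality to a \emph{purity} theorem on the Fitting ideal $E_1(\partial_2)$ — is the genuine hard point. It is exactly here that the hypotheses "closed" (no boundary contributions breaking the self-duality of $\partial_2$) and "orientable" (so that PD is untwisted) are used in an essential way; for compact $3$-manifolds with nonempty boundary the analogous statement fails. Technically, one can either invoke Turaev's Reidemeister-torsion formalism for balanced length-$3$ complexes, or argue directly that a self-conjugate square presentation matrix over the regular ring $\Lambda$ has principal $0$-th Fitting ideal modulo pseudo-null torsion, and the pseudo-null summand is killed by the duality isomorphism away from $\{1\}$.
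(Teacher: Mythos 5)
Your reduction is carried out correctly up to the crux: the easy inclusion via \eqref{eq:cva}, the Euler characteristic forcing $k_1=k_2$, the rank count $\dim H_1(M,\C_\rho)=(k-1)-\rank(\partial_2\otimes\rho)$ for $\rho\ne 1$, and the identification $\VV_1(G)\setminus\{1\}=V(E_1(\partial_2))\setminus\{1\}$ together with the UFD-decomposition $V(E_1(\partial_2))=V(\Delta_G)\cup Z$ with $\codim Z\ge 2$ are all sound. You have also correctly isolated the real content of the proposition: purity, i.e.\ $Z\subseteq\{1\}$. (For the record, the paper itself simply cites \cite{DPS-imrn} and supplies no proof, so there is nothing internal to compare against.)

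But the purity step is not actually proved, and the two ways you suggest of filling it do not, as written, constitute an argument. First, a chain homotopy equivalence $C_*\simeq \overline{\Hom_\Lambda(C_{3-*},\Lambda)}$ does \emph{not} yield a matrix equivalence $\partial_2\sim\overline{\partial_2}^{\,T}$ over $\Lambda$; it gives chain maps in both directions together with homotopies, which is strictly weaker. Any argument that treats $\partial_2$ as literally conjugate-self-transpose needs to justify how to pass from a homotopy equivalence to such a normal form (or else avoid that reduction entirely). Second, the sentence ``a self-conjugate square presentation matrix over the regular ring $\Lambda$ has principal $0$-th Fitting ideal modulo pseudo-null torsion'' is not a coherent claim: $E_0$ of a \emph{square} matrix is always principal (it is $(\det)$, here $(0)$), so self-conjugacy plays no role there; and the assertion that ``the pseudo-null summand is killed by the duality isomorphism away from $\{1\}$'' is precisely the purity statement you are trying to prove, restated rather than established. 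Third, the implicit claim that duality makes the Fitting ideal behave like that of a \emph{symmetric} (or skew) degeneracy locus, thereby excluding codimension~$\ge 2$ components, needs a genuine lemma: Hermitian-type matrices over $\Lambda$ do not automatically have degeneracy loci of pure codimension~one. What a correct argument must do is convert the chain-level duality into a module-level statement --- for instance that, away from the augmentation ideal, $A=H_1(\widetilde M)$ is conjugate-isomorphic to $\Ext^1_\Lambda(A,\Lambda)$ (after disposing of the $H_0$ and $H_2$ contributions via the universal coefficients spectral sequence), and then show this self-duality forces all associated primes of $A$ off $\{1\}$ to have height one. As it stands, that is exactly the missing content, and invoking ``Turaev's formalism'' without unwinding it leaves the proposition unproved.
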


In other words, at least away from the origin, the characteristic 
variety $\V_1(M)$ is the hypersurface defined by the 
Alexander polynomial $\Delta_M=\Delta_G$.

\section{Finite, regular abelian covers}
\label{sect:covers}

\subsection{Regular covers}
\label{subsec:regcov}

As before, let $X$ be a connected CW-complex with finite 
$1$-skeleton, and basepoint at the unique $0$-cell, $x_0$.   
Consider a covering map, $p\colon Y\to X$, with connected 
total space $Y$.  The cell structure on $X$ lifts to a cell 
structure on $Y$, in such a way that $p$ is a cellular map. 
Fix a basepoint $y_0\in p^{-1}(x_0)$; the induced homomorphism, 
$p_{\sharp}\colon \pi_1(Y,y_0)\to \pi_1(X,x_0)$, is injective.  
The assignment $p\leadsto  \im(p_{\sharp})$, then, 
establishes a one-to-one to correspondence between  
basepoint-preserving equivalence classes of connected 
covers of $X$ and subgroups of $\pi_1(X,x_0)$.

In this context, a special role is played by the {\em regular}\/ 
covers of our space $X$, that is, those covers 
$p\colon (Y,y_0)\to (X,x_0)$ for which $\im(p_{\sharp})$ 
is a normal subgroup of $G=\pi_1(X,x_0)$.  For such 
a cover, let $A=G/\im(p_{\sharp})$ be the quotient group,   
and let $\chi \colon G \to A$ be the canonical projection. 
We say that $A$ is the group of deck transformations, 
and $\chi$ is a classifying homomorphism for the cover $p$.
Conversely, if $\chi\colon G \surj A$ is an epimorphism to a 
(necessarily finitely generated) group $A$, there is a regular
cover $X^{\chi}\to X$, whose classifying homomorphism is $\chi$.

Now let $f\colon X'\to X$ be a map, and let $Y'\to X'$ be 
the cover obtained by pulling back the cover $X^{\chi}\to X$ 
along $f$. Then $Y'\to X'$ is a regular $A$-cover, classified 
by the homomorphism $\chi'=\chi\circ f_{\sharp}$.

\subsection{Homology of finite abelian covers}
\label{subsec:sak}
The next theorem records a formula for the homology 
groups $H_q(X^{\chi}, \k)$, in the case when the group 
of deck-transforma\-tions $A$ is finite, 
and $\k$ is an algebraically closed field of characteristic  
not dividing the order of $A$.  In the case $q=1$, this 
formula is well-known, and due to Libgober \cite{Li92}, 
Sakuma \cite{Sa} and E.~Hironaka~\cite{Hi97} 
for  $\k=\C$, and to Matei--Suciu~\cite{MS02} for 
other fields $\k$.  We provide a self-contained proof, 
following \cite[Theorem 2.5]{DeS12}.

Let $\chi\colon G\surj A$ be an epimorphism 
to a finite abelian group $A$, 
and let $\widehat{\chi}\colon \widehat{A}\to \widehat{G}$ 
be the induced morphism between character groups. 
Then $\widehat{\chi}$ is injective, and its image is
(non-canonically) isomorphic to $A$.  Furthermore, if 
$\rho\colon G\to \k^*$ 
is a character belonging to $\im(\widehat{\chi})$, there is 
a unique character $\iota_\rho\colon A\to \k^*$ such that 
$\rho=\iota_\rho\circ\chi$.

\begin{theorem}
\label{thm:betti cover}
Let $X^{\chi}\to X$ be the regular cover defined by an epimorphism 
$\chi$ from $G=\pi_1(X,x_0)$ to a finite abelian group $A$. 
Let $\k$ be an algebraically closed field of characteristic  
not dividing the order of $A$.  Then, for each $q\ge 0$,   
\begin{equation}
\label{eq:hcov}
\dim_\k H_q(X^{\chi},\k)=\sum_{s\ge 1}\abs{\im(\widehat{\chi}) 
\cap  \V^q_s(X,\k)}.
\end{equation}
\end{theorem}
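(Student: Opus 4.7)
The plan is to exploit the fact that the deck transformation group $A$ is finite and that $\ch(\k)\nmid |A|$, so that Maschke's theorem makes the group algebra $\k A$ split completely into one-dimensional pieces. This will decompose the homology of the cover as a direct sum of twisted homologies of the base, and each twist is exactly a character in $\im(\widehat{\chi})$.

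First, I would set up the equivariant chain complex. Let $\widetilde X\to X$ be a universal cover, and let $C_*(\widetilde X,\k)$ be its cellular chain complex, which is naturally a free left $\k G$-module chain complex. Then the cover $X^{\chi}\to X$ has cellular chain complex
\begin{equation*}
C_*(X^{\chi},\k)\;\cong\; \k A\otimes_{\k G} C_*(\widetilde X,\k),
\end{equation*}
where $\k A$ is regarded as a right $\k G$-module through $\chi\colon G\surj A$. Moreover this isomorphism is one of left $\k A$-module chain complexes, with $A$ acting through deck transformations on the left.

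Next, I would decompose $\k A$ as a left $\k A$-module. Because $A$ is finite abelian and $\k$ is algebraically closed of characteristic coprime to $|A|$, Maschke's theorem together with the fact that every irreducible representation of an abelian group is one-dimensional gives an isomorphism of left $\k A$-modules
\begin{equation*}
\k A\;\cong\;\bigoplus_{\iota\in\widehat{A}}\k_{\iota}.
\end{equation*}
Tensoring with $C_*(\widetilde X,\k)$ over $\k G$ and taking homology, I obtain
\begin{equation*}
H_q(X^{\chi},\k)\;\cong\;\bigoplus_{\iota\in\widehat{A}}H_q\bigl(\k_{\iota}\otimes_{\k G} C_*(\widetilde X,\k)\bigr)\;=\;\bigoplus_{\iota\in\widehat{A}} H_q(X,\k_{\iota\circ\chi}).
\end{equation*}
Here I use that $\k_\iota$, viewed as a right $\k G$-module via $\chi$, is precisely the rank-one local system $\k_{\iota\circ\chi}$.

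Finally, it remains to repackage the right-hand side using the characteristic varieties. The map $\iota\mapsto\iota\circ\chi$ is a bijection from $\widehat{A}$ onto $\im(\widehat{\chi})\subset\widehat{G}$, so summing dimensions gives
\begin{equation*}
\dim_{\k}H_q(X^{\chi},\k)\;=\;\sum_{\rho\in\im(\widehat{\chi})}\dim_{\k}H_q(X,\k_\rho).
\end{equation*}
For each such $\rho$, $\dim_\k H_q(X,\k_\rho)$ equals the number of $s\ge 1$ with $\rho\in\V^q_s(X,\k)$, by the very definition \eqref{eq:cvs}. Interchanging the two summations yields the claimed formula. The main subtlety is ensuring that the $\k A$-module decomposition of the chain complex passes cleanly to homology; this is automatic because taking $\iota$-isotypic components is an exact functor (again by semisimplicity of $\k A$), so it commutes with $H_q$.
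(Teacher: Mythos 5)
Your proof is correct and follows essentially the same route as the paper's: the paper invokes Shapiro's Lemma to identify $H_q(X^{\chi},\k)$ with $H_q(X,\k[A])$, then uses complete reducibility of $\k[A]$ into characters, which is exactly the decomposition you carry out explicitly at the chain-complex level. The only difference is cosmetic — you prove the Shapiro isomorphism directly via $\k A\otimes_{\k G}C_*(\widetilde X,\k)$ rather than citing it.
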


\begin{proof}
The epimorphism $\chi$ puts a left $\k[G]$-module 
structure on the group algebra $\k[A]$.  By Shapiro's Lemma, 
$H_q(X^{\chi},\k)$ is isomorphic, as a right $\k[A]$-module, 
to $H_q(X,\k[A])$.
By our assumption on $\k$, the algebra $\k[A]$ is 
completely reducible, with one-dimensional, irreducible 
representations parametrized by $\widehat{A}$.   
As  a left $\k[G]$-module, each such representation 
is isomorphic to the image of 
$\widehat{\chi}\colon \widehat{A}\to\widehat{G}$. 
Thus, 
\begin{equation}
\label{eq:hqchi}
H_q(X, \k[A])\cong\bigoplus_{\rho\in\im(\widehat{\chi})} 
H_q(X,\k_\rho).
\end{equation}

By the remark above, $H_q(X,\k_\rho)$ is isomorphic, 
as a $\k[A]$-module, to $(\k_{\iota_\rho})^{\oplus b}$ 
where $b=\dim_\k H_q(X,\k_\rho)$.  
By definition, $\rho \in \V^q_s(X,\k)$ if and only if 
$\dim_{\k} H_q(X,\k_{\rho}) \ge s$.  Putting things together, 
we obtain an isomorphism of $\k[A]$-modules, 
\begin{equation}
\label{eq:decomp}
H_q(X^{\chi},\k) \cong \bigoplus_{s\ge 1}
\bigoplus_{\rho\in \im(\widehat{\chi}) 
\cap  \V^q_s(X,\k)} \k_{\iota_\rho}.  
\end{equation}
Taking dimensions on both sides completes the proof.
\end{proof}

\subsection{The characteristic polynomial of the algebraic monodromy}
\label{subsec:charpoly}

We now specialize to the case when $X^{\chi}\to X$ is a 
regular cover, classified by an epimorphism  
$\chi\colon \pi_1(X,x_0)\to A$ to a finite cyclic group $A$. 
Choose a generator $\alpha$ of $A$.   
Let $h=h_\alpha\colon X^{\chi}\to X^{\chi}$ be  
the corresponding monodromy automorphism, and let 
$h_*\colon H_q(X^{\chi},\k)\to H_q(X^{\chi},\k)$ be 
the induced map in homology.  As in \cite[\S 2.4]{DeS12}, 
then, we obtain the following application of Theorem \ref{thm:betti cover}.

\begin{theorem}
\label{thm:alg mono}
Assume $\ch(\k)\nmid \abs{A}$. Then, the characteristic polynomial of the 
algebraic monodromy, $\Delta^\k_{\chi,q}(t) = \det (t\cdot \id - h_*)$, 
is given by
\begin{equation}
\label{eq:delta chi}
\Delta_{\chi,q}^\k(t) =  \prod_{s\ge 1} 
\prod_{\rho\in \im(\widehat{\chi}) 
\cap  \V^q_s(X,\k)}  (t-\iota_{\rho}(\alpha)). 
\end{equation}
\end{theorem}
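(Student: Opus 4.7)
The plan is to upgrade the module-level computation from the proof of Theorem \ref{thm:betti cover} by keeping track of the action of the deck transformation group $A$, and in particular the action of the chosen generator $\alpha$.

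The first step is to recall that the monodromy $h = h_\alpha\colon X^\chi \to X^\chi$ is by definition the deck transformation associated to $\alpha \in A$ under the isomorphism between $A$ and the group of covering transformations. Hence, on the $\k[A]$-module $H_q(X^\chi,\k)$, the induced map $h_*$ is precisely multiplication by $\alpha$. Thus the characteristic polynomial $\Delta^\k_{\chi,q}(t) = \det(t\cdot \id - h_*)$ can be read off from any decomposition of $H_q(X^\chi,\k)$ into $\k[A]$-submodules on which $\alpha$ acts diagonalizably.

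The second step is to invoke the $\k[A]$-module isomorphism \eqref{eq:decomp} obtained in the proof of Theorem \ref{thm:betti cover}:
\[
H_q(X^\chi,\k) \;\cong\; \bigoplus_{s\ge 1}\bigoplus_{\rho \in \im(\widehat{\chi})\cap \V^q_s(X,\k)} \k_{\iota_\rho}.
\]
On the one-dimensional summand $\k_{\iota_\rho}$, every element $a\in A$ acts by the scalar $\iota_\rho(a)$; in particular, the generator $\alpha$ acts as multiplication by $\iota_\rho(\alpha) \in \k^*$. Combined with the previous step, this identifies $\k_{\iota_\rho}$ as an $h_*$-invariant line on which $h_*$ has eigenvalue $\iota_\rho(\alpha)$.

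The third step is to take the determinant of $t\cdot \id - h_*$ on each summand, obtaining a factor $(t - \iota_\rho(\alpha))$, and multiply over all summands indexed by $s\ge 1$ and $\rho \in \im(\widehat{\chi})\cap \V^q_s(X,\k)$. This yields the claimed product formula \eqref{eq:delta chi}.

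The only point requiring genuine care is the compatibility in the first step: that the geometric monodromy $h_\alpha$, under the Shapiro-type identification $H_q(X^\chi,\k)\cong H_q(X, \k[A])$ used to derive \eqref{eq:hqchi}, corresponds precisely to right (equivalently, for abelian $A$, left) multiplication by $\alpha$ on the coefficient module $\k[A]$. This is a standard fact, but requires matching the $\k[G]$-action on $\k[A]$ coming from $\chi$ with the deck-transformation action on the cellular chains of $X^\chi$, and pinning down the sign/direction convention for $\alpha$. Everything else is bookkeeping on one-dimensional representations of a finite cyclic group, and the hypothesis $\ch(\k)\nmid |A|$ is exactly what makes $\k[A]$ split into such characters.
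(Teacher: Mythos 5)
Your proposal is correct and follows essentially the same route as the paper: both invoke complete reducibility of $\k[A]$ (from $\ch(\k)\nmid\abs{A}$) to diagonalize $h_*$, and then read off the eigenvalues directly from the decomposition \eqref{eq:decomp} established in the proof of Theorem \ref{thm:betti cover}. Your extra remark identifying $h_*$ as multiplication by $\alpha$ under the Shapiro-lemma isomorphism $H_q(X^{\chi},\k)\cong H_q(X,\k[A])$ makes explicit a point the paper leaves implicit, but it is the same argument.
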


\begin{proof}
From the hypothesis, the $\k[A]$-module 
$H_q(X^{\chi},\k)=H_q(X,\k[A])$ is completely reducible.   
Therefore, the automorphism $h_*$ is diagonalizable.  
Furthermore, the eigenvalues of $h_*$, counted with 
multiplicity, are indexed by the irreducible $\k[A]$-modules 
appearing in decomposition \eqref{eq:decomp}, 
and we are done.
\end{proof}

In degree $q=1$, and for $\k=\C$, the polynomial 
$\Delta_{\chi}(t)=\Delta_{\chi,1}^\C(t)$ can be related to 
the Alexander polynomial of the group $G=\pi_1(X)$, as follows:
\begin{equation}
\label{eq:del chi}
\Delta_{\chi}(t) = (t-1)^c\Delta_G(t^{\chi_1}, \dots , t^{\chi_n}),
\end{equation}
where $c$ is an integer depending only on $G$.

\subsection{Jump loci of finite covers}
\label{subsec:jumps}

The next proposition and its corollary were proved in \cite{DP11} 
in the case when $\k$ has characteristic $0$.  For completeness, 
we include a proof, valid in a slightly more general context. 

\begin{prop}
\label{prop:jumpcov}
Let $A$ be a finite group, and let $p\colon Y\to X$ 
be a regular $A$-cover.  Suppose that $\ch(\k)\nmid \abs{A}$, 
and $\ch(\k)\ne 2$ if $H_1(X,\Z)$ has $2$-torsion. Then
$p^*(\V^q_s(X, \k))\subseteq \V^q_s(Y, \k)$ and  
$p^*(\RR^q_s(X, \k))\subseteq \RR^q_s(Y, \k)$, 
for all $q\ge 0$ and $s\ge 1$. 
\end{prop}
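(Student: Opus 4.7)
The plan is to deduce both inclusions from a single transfer argument, exploiting the fact that $\abs{A}$ is invertible in $\k$. The associated cellular transfer $\tau\colon C_*(X;L) \to C_*(Y;p^*L)$, defined on a $q$-cell in $X$ as the sum of its $\abs{A}$ lifts in $Y$, is a chain map for any local system $L$ on $X$, and satisfies $p_*\circ\tau = \abs{A}\cdot\id$. This gives a splitting of $p_*$ (and dually of $p^*$) as soon as $\abs{A}$ is a unit in $\k$.

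For the characteristic varieties I would apply this with $L=\k_\rho$. Shapiro's lemma identifies $H_q(Y,\k_{p^*\rho})$ with $H_q(X,\k_\rho\otimes_\k \k[A])$, where $\k[A]$ carries the $\pi_1(X)$-action pulled back from the classifying map $\chi\colon \pi_1(X)\surj A$. The averaging idempotent $\frac{1}{\abs{A}}\sum_{a\in A}a$ splits a copy of the trivial representation off the regular representation, exhibiting $\k_\rho$ as a direct summand of $\k_\rho\otimes_\k \k[A]$. Consequently $H_q(X,\k_\rho)$ embeds as a direct summand of $H_q(Y,\k_{p^*\rho})$, and the resulting dimension inequality yields $p^*\V^q_s(X,\k)\subseteq \V^q_s(Y,\k)$.

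For the resonance varieties I would promote the same splitting to the Aomoto cochain complexes. The ring homomorphism $p^*\colon H^*(X,\k)\to H^*(Y,\k)$ is automatically a chain map from $(H^*(X,\k),\cdot a)$ to $(H^*(Y,\k),\cdot p^*a)$ for any $a\in H^1(X,\k)$. Dually, the cohomological transfer $\tau\colon H^*(Y,\k)\to H^*(X,\k)$ obeys the projection formula $\tau(p^*x\smile y)=x\smile \tau(y)$, which is precisely the statement that $\tau$ is a chain map in the reverse direction between the same two Aomoto complexes (with the roles of $a$ and $p^*a$ interchanged). Combined with $\tau\circ p^*=\abs{A}\cdot\id$ and invertibility of $\abs{A}$, this exhibits the Aomoto complex of $X$ at $a$ as a direct summand of the Aomoto complex of $Y$ at $p^*a$. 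Passing to cohomology and comparing dimensions gives $p^*\RR^q_s(X,\k)\subseteq \RR^q_s(Y,\k)$.

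The main thing to get right is the compatibility package for the transfer: it must simultaneously be a chain map of local-system complexes, be an $H^*(X,\k)$-module map via the projection formula, and split $p^*$ up to the scalar $\abs{A}$. Each of these is a direct verification on cellular (co)chains, using only that $A$ acts freely on the cells of $Y$, so I do not anticipate a genuine obstacle here --- only some careful bookkeeping to ensure that the local-system coefficients and the cup-product structure are handled in the same breath.
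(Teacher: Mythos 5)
Your argument is correct, but it takes a genuinely different route from the paper's. For the characteristic varieties, the paper runs the Hochschild--Serre spectral sequence of the extension $1\to\pi_1(Y)\to\pi_1(X)\to A\to 1$ with coefficients in $p^*\LL$; the hypothesis $\ch(\k)\nmid\abs{A}$ kills the columns $E^2_{ij}$ with $i>0$, collapsing the sequence to $H_*(X,\LL)\cong H_*(Y,p^*\LL)_A$, from which the injection $p^*$ on cohomology follows by duality. You instead invoke Shapiro's lemma to identify $H_q(Y,\k_{p^*\rho})$ with $H_q(X,\k_\rho\otimes_\k\k[A])$ and then split $\k_\rho$ off as a $G$-equivariant direct summand via the averaging idempotent $\frac{1}{\abs{A}}\sum_{a\in A}a$. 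Both arguments are Maschke's theorem in disguise, but yours is more elementary and gives the direct-summand statement on the nose rather than after a spectral sequence collapse. For the resonance varieties, the paper uses transfer only to identify $p^*\colon H^*(X,\k)\to H^*(Y,\k)$ with the inclusion of the $A$-invariants $H^*(Y,\k)^A\hookrightarrow H^*(Y,\k)$, then observes that for $a$ invariant the Aomoto complex $(H^*(X,\k),\cdot a)$ is the fixed subcomplex of $(H^*(Y,\k),\cdot p^*a)$ and passes to cohomology. You dualize: the projection formula $\tau(p^*x\smile y)=x\smile\tau(y)$ makes the cohomological transfer $\tau$ a chain map from $(H^*(Y,\k),\cdot p^*a)$ to $(H^*(X,\k),\cdot a)$, and $\tau\circ p^*=\abs{A}\cdot\id$ then exhibits $p^*$ as a split chain-complex injection. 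Both versions are valid and logically parallel; yours has the small advantage of treating the two halves of the proposition with a single mechanism (a retraction of $p^*$), whereas the paper's is split between a spectral-sequence argument and a fixed-point argument. The bookkeeping you flag at the end --- that the cellular transfer must simultaneously be a chain map for local coefficients and satisfy the projection formula --- is indeed the only content to verify, and both checks are standard for regular finite covers where $A$ permutes the cells of $Y$ freely.
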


\begin{proof}
For the first claim, 
let $\LL$ be a $\k$-valued, rank-$1$ local system on $X$, and 
consider the Hochschild-Serre spectral sequence of the cover, 
$E^2_{ij}= H_i(A, H_j(Y, p^* \LL)) \Rightarrow H_{i+j}(X, \LL)$. 
Since $A$ is finite and $\ch(\k) \nmid \abs{A}$, we have that $E^2_{ij}=0$, 
for all $i>0$ and $j\ge 0$; thus, the spectral sequence collapses to 
an isomorphism, $H_{*}(X, \LL) \cong H_{*}(Y, p^*\LL)_A$.  
By duality, we obtain an injection, 
$p^{*}\colon H^{*}(X, \LL)\hookrightarrow H^{*}(Y, p^*\LL)$, 
and we are done. 

For the second claim, a standard transfer argument, using 
again the hypothesis on $\ch(\k)$, allows us to identify the 
induced algebra map in cohomology, 
$p^{*}\colon H^{*}(X, \k)\rightarrow H^{*}(Y, \k)$, with the inclusion 
$p^{*}\colon H^{*}(Y, \k)^{A} \hookrightarrow H^{*}(Y, \k)$. 
For a class $a \in H^{1}(Y, \k)^{A}$, the monodromy action of $A$  
on $H^{*}(Y, \k)$ gives rise to an action on the chain 
complex $(H^{*}(Y, \k), \cdot a)$, with fixed subcomplex 
$(H^{*}(X, \k), \cdot a)$.  We thus obtain an inclusion 
$H^*(H^{*}(X, \k), \cdot a) \hookrightarrow H^*(H^{*}(Y, \k), \cdot a)$, 
and we are done. 
\end{proof}

The proof of the second part of the proposition has an immediate 
corollary.

\begin{corollary}
\label{cor:trivnom}
With notation as above, suppose $A$ acts trivially on $H^1(Y,\k)$.  
Then $p^*\colon \RR^1_s(X, \k)\to \RR^1_s(Y, \k)$ is an isomorphism, 
for all $s\ge 1$. 
\end{corollary}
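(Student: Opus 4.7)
My plan is to upgrade the inclusion $p^*(\RR^1_s(X,\k))\subseteq \RR^1_s(Y,\k)$ from Proposition \ref{prop:jumpcov} to a bijection by a direct refinement of the argument already given for the second half of that proposition.

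First I would note that the transfer identification used in the proof of Proposition \ref{prop:jumpcov} gives an isomorphism of graded algebras $p^*\colon H^*(X,\k)\xrightarrow{\simeq} H^*(Y,\k)^A$, identifying $p^*$ in every degree with the inclusion of $A$-invariants. The hypothesis that $A$ acts trivially on $H^1(Y,\k)$ then forces $H^1(Y,\k)^A=H^1(Y,\k)$, so $p^*\colon H^1(X,\k)\to H^1(Y,\k)$ is already a $\k$-linear isomorphism. It therefore suffices to show that if $b\in \RR^1_s(Y,\k)$ and $a=(p^*)^{-1}(b)\in H^1(X,\k)$, then $a\in \RR^1_s(X,\k)$.

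The core of the argument is to analyze the Aomoto-type cochain complex $(H^*(Y,\k),\cdot b)$ as a complex of $A$-modules. Since $b$ is $A$-fixed and $A$ acts on $H^*(Y,\k)$ by graded algebra automorphisms, multiplication by $b$ is $A$-equivariant, so this is genuinely a complex of $\k[A]$-modules. Its fixed subcomplex $(H^*(Y,\k)^A,\cdot b)$ coincides, via the transfer identification, with $(H^*(X,\k),\cdot a)$. Because $\abs{A}$ is invertible in $\k$, Maschke's theorem ensures that $(-)^A$ is an exact functor on $\k[A]$-modules, hence it commutes with taking cohomology of complexes; this yields
\[
H^1(H^*(X,\k),\cdot a) \;=\; H^1(H^*(Y,\k),\cdot b)^A.
\]

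It remains to verify that the $A$-action on the right-hand side is trivial. But $H^1(H^*(Y,\k),\cdot b)$ is by construction a subquotient of $H^1(Y,\k)$, namely $\ker(\cdot b\colon H^1\to H^2)/\im(\cdot b\colon H^0\to H^1)$, and the ambient module $H^1(Y,\k)$ carries the trivial $A$-action by hypothesis. A subquotient of a trivial module is trivial, so $H^1(H^*(Y,\k),\cdot b)^A=H^1(H^*(Y,\k),\cdot b)$; combined with the displayed equation this yields $\dim_\k H^1(H^*(X,\k),\cdot a)=\dim_\k H^1(H^*(Y,\k),\cdot b)$, and hence $b\in \RR^1_s(Y,\k)\Longleftrightarrow a\in \RR^1_s(X,\k)$. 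Together with Proposition \ref{prop:jumpcov} this establishes the asserted bijection. I expect no real obstacle here: the only delicate step is the exactness of $(-)^A$, which is standard Maschke and is already tacitly used in Proposition \ref{prop:jumpcov}.
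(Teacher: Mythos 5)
Your proof is correct and follows the same line the paper intends: you refine the second part of the proof of Proposition~\ref{prop:jumpcov} by using the trivial $A$-action on $H^1(Y,\k)$ to see that $p^*\colon H^1(X,\k)\to H^1(Y,\k)$ is a bijection and that $H^1(H^*(Y,\k),\cdot b)^A = H^1(H^*(Y,\k),\cdot b)$, turning the inclusion from that proof into an equality of dimensions. This is precisely the "immediate" consequence the paper alludes to.
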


The assumption of the corollary is really necessary. For instance, 
if $X$ is a wedge of $n>1$ circles, then $Y$ is a wedge of 
$m=(n-1)\abs{A}+1$ circles; thus, if $\abs{A}>1$, then the map   
$p^*\colon \RR^1_1(X, \k)\to \RR^1_1(Y, \k)$ is 
a proper inclusion, sending $\k^n$ into $\k^m$.

\section{Formality}
\label{sect:formal}

\subsection{Formal spaces}
\label{subsec:formal}

Let $X$ be connected CW-complex with finite 
$1$-skeleton.  To such a space, Sullivan associated 
the commutative differential graded algebra (cdga)  
of polynomial differential forms on $X$ with coefficients 
in $\Q$, denoted $A_{\PL}(X, \Q)$.  

Let $H^*(X,\Q)$ be the rational cohomology algebra 
of $X$, endowed with the zero differential.  
The space $X$ is said to be {\em formal}\/ if there is a 
zig-zag of cdga morphisms connecting $A_{\PL}(X, \Q)$ 
to $H^*(X,\Q)$, with each such morphism inducing an 
isomorphism in cohomology. The space $X$ is merely 
{\em $k$-formal}\/  (for some $k\ge 1$) if each of these 
morphisms induces an isomorphism in degrees up to $k$, 
and a monomorphism in degree $k+1$.  If $X$ is a 
$k$-formal and $\dim X\le k+1$, then $X$ is formal. 

If $X$ is a smooth manifold, Sullivan's algebra may be 
replaced by the de~Rham algebra $\Omega^*_{\rm dR}(X)$ 
of smooth, differential forms on $X$.  Examples of formal 
spaces include suspensions, rational cohomology 
tori, surfaces, compact connected Lie groups, as well 
as their classifying spaces.   On the other hand, the 
only nilmanifolds which are formal are tori. Formality 
is preserved under wedges and products of spaces, 
and connected sums of manifolds.  

The $1$-minimality property of a space $X$ 
depends only on its fundamental group, $G=\pi_1(X,x_0)$.  
Alternatively, a finitely generated group $G$ is $1$-formal 
if and only if its Malcev Lie algebra (defined as the Lie algebra 
of primitive elements in the $I$-adic completion of the 
group-algebra $\Q[G]$) admits a quadratic 
presentation.   Examples of $1$-formal groups include free 
groups and free abelian groups of finite rank, surface groups, 
and groups with first Betti number equal to $0$ or $1$.  
The $1$-formality property is preserved under free products 
and direct products. 

A classical obstruction to formality is provided by the higher-order 
Massey products.  In particular, if $X$ is $1$-formal, and 
$\alpha_1,\alpha_2,\alpha_3$ are elements in $H^1(X,\Q)$ 
such that $\alpha_1 \alpha_2 =\alpha_2\alpha_3=0$, 
then the Massey triple product $\angl{\alpha_1,\alpha_2,\alpha_3}$ 
must vanish, modulo indeterminacy. 

One can make an analogous definition of $\k$-formality over an 
arbitrary field $\k$.  Again, an obstruction for $\k$-formality is 
provided by the Massey products in $H^*(X,\k)$.

\subsection{Formality in finite covers}
\label{subsec:formal funct}

Suppose $f\colon X\to Y$ is a map such that $f^*\colon H^*(Y,\Q) \to 
H^*(X,\Q)$ is an isomorphism up to degree $k$, and a monomorphism 
in degree $k+1$.  Then $X$ is $k$-formal if and only if $Y$ is $k$-formal. 

\begin{lemma}
\label{lem:kfc}
Let $p\colon Y\to X$ be a finite, regular cover.  If $Y$ is $k$-formal, 
then $X$ is also $k$-formal.  
\end{lemma}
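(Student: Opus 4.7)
The plan is to identify the Sullivan algebra of $X$ with the $A$-invariants of the Sullivan algebra of $Y$ (where $A$ is the deck group) and then transfer the formality data across this identification using an averaging argument.

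Let $A$ denote the (finite) group of deck transformations of the regular cover $p\colon Y\to X$. The action of $A$ on $Y$ induces an action by cdga automorphisms on $A_{\PL}(Y,\Q)$, and the pullback $p^*\colon A_{\PL}(X,\Q)\to A_{\PL}(Y,\Q)$ factors through the subalgebra of $A$-invariants. First I would verify that, because $|A|$ is invertible in $\Q$, the averaging operator $\pi=\tfrac{1}{|A|}\sum_{g\in A} g^*$ gives a $\Q$-linear retraction of $A_{\PL}(Y,\Q)$ onto $A_{\PL}(Y,\Q)^A$; since $(-)^A$ is then an exact functor, the cohomology of $A_{\PL}(Y,\Q)^A$ is $H^*(Y,\Q)^A$. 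Combined with a standard transfer argument (using that $p_*\circ p^*$ is multiplication by $|A|$ on $H^*(X,\Q)$), this shows that the map $p^*\colon A_{\PL}(X,\Q)\to A_{\PL}(Y,\Q)^A$ is a quasi-isomorphism, and recovers the familiar identification $H^*(X,\Q)\cong H^*(Y,\Q)^A$.

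Next I would pick a Sullivan minimal model $\rho\colon (M_Y,d)\to A_{\PL}(Y,\Q)$ and lift the $A$-action to $M_Y$ through cdga automorphisms so that $\rho$ becomes $A$-equivariant; such a lift exists by the uniqueness of minimal models together with a standard obstruction-theoretic argument in characteristic zero. Assuming $Y$ is $k$-formal, there is by definition a zig-zag of cdga quasi-isomorphisms between $M_Y$ and $H^*(Y,\Q)$ whose induced maps on cohomology are isomorphisms in degrees $\le k$ and injective in degree $k+1$. The crux is to upgrade this zig-zag to one consisting of $A$-equivariant cdga maps. I would do this by running the Halperin--Stasheff obstruction analysis inside the category of $A$-equivariant cdgas: all the relevant obstruction groups carry an induced $A$-action, and averaging over $A$ kills every class that already vanishes non-equivariantly.

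Finally, applying the invariants functor $(-)^A$, which is exact since $|A|\in\Q^\times$, to the equivariant zig-zag yields a zig-zag of cdga quasi-isomorphisms from $(M_Y)^A$ to $H^*(Y,\Q)^A=H^*(X,\Q)$, inducing isomorphisms in degrees $\le k$ and an injection in degree $k+1$. Composed with the quasi-isomorphisms $A_{\PL}(X,\Q)\xrightarrow{\simeq} A_{\PL}(Y,\Q)^A$ and $(M_Y)^A\xrightarrow{\simeq} A_{\PL}(Y,\Q)^A$ from the first paragraph, this exhibits $X$ as $k$-formal. The main obstacle is the middle step: making the formality data $A$-equivariant uses the characteristic-zero hypothesis in an essential way, and is exactly the point that would fail if $|A|$ were not invertible in the base field.
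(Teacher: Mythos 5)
The paper's own proof of this lemma is a one-line citation to \cite[Remark~3.30(2)]{FOT}, observing that the argument there for formality carries over to $k$-formality; your proposal unpacks precisely that argument (averaging to identify $A_{\PL}(X,\Q)$ with the $A$-invariants of $A_{\PL}(Y,\Q)$, lifting the deck action to the minimal model, equivariantizing the formality zig-zag by characteristic-zero obstruction theory, and then taking invariants), so the two take essentially the same approach. One small imprecision: for $k$-formality the zig-zag consists not of cdga quasi-isomorphisms but of cdga morphisms inducing isomorphisms in degrees $\le k$ and a monomorphism in degree $k+1$; this is harmless here since exactness of $(-)^A$ over $\Q$ preserves that degreewise behavior.
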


\begin{proof} 
Let $\Gamma$ be the group of deck-transformations, so that 
$X=Y/\Gamma$.  As shown in \cite[Remark 3.30(2)]{FOT}, 
if $Y$ is formal, then $X$ is also formal.  The same argument 
applies to $k$-formality. 
\end{proof}

A particular case is worth mentioning. Suppose $G$ is a finitely 
generated group, and $H \triangleleft  G$ is a finite-index, normal  
subgroup.  If $H$ is $1$-formal, then $G$ is also $1$-formal. 
As the next example shows, the converse does not hold.

\begin{example}
\label{ex:non1f}
As is well-known, the Heisenberg group 
$H=\langle x,y \mid \text{$[x,y]$ central} \rangle$ 
admits non-trivial triple Massey products in $H^1(H,\Q)$; 
thus, $H$ is not $1$-formal.   
On the other hand, the semi-direct product  $G=H\rtimes \Z_2$ 
defined by the involution $x\mapsto x^{-1}$, $y\mapsto y^{-1}$ 
has $b_1(G)=0$, and so $G$ is $1$-formal.  
\end{example}

Nevertheless, the converse to Lemma \ref{lem:kfc} holds 
under an additional, rather restrictive condition. 

\begin{lemma}[\cite{DP11}]
\label{lem:kformal covers}
Let $p\colon Y \to X$ be a finite, regular cover, and 
suppose the group of deck-transformations acts trivially 
on $H^i(Y, \Q)$, for all $i\le k$. 
Then $Y$ is $k$-formal if and only if $X$ is $k$-formal.
\end{lemma}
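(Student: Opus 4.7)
The plan is to reduce the lemma to the preservation statement recalled at the start of \S\ref{subsec:formal funct}, namely that a map inducing a rational cohomology isomorphism up to degree $k$ and a monomorphism in degree $k{+}1$ transports $k$-formality in both directions. Observe first that one implication, ``$Y$ is $k$-formal $\Rightarrow$ $X$ is $k$-formal'', is exactly Lemma \ref{lem:kfc} and needs no hypothesis on the monodromy action. So the work lies entirely in the converse.

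To handle the converse, I would invoke the classical transfer argument for the finite regular cover $p\colon Y\to X$ with deck group $\Gamma$. Since $|\Gamma|$ is invertible in $\Q$, the averaging operator $\tau = |\Gamma|^{-1}\sum_{\gamma\in\Gamma}\gamma^*$ is a one-sided inverse to $p^*$ on the $\Gamma$-invariant part, and the induced map identifies $p^*\colon H^*(X,\Q)\xrightarrow{\;\cong\;} H^*(Y,\Q)^{\Gamma}$. Under the standing hypothesis that $\Gamma$ acts trivially on $H^i(Y,\Q)$ for $i\le k$, the invariants coincide with the full cohomology in those degrees, so $p^*$ is an isomorphism in degrees $\le k$. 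In degree $k{+}1$ no assumption is made on the action, but the factorization
\[
p^*\colon H^{k+1}(X,\Q)\xrightarrow{\;\cong\;} H^{k+1}(Y,\Q)^{\Gamma}\hookrightarrow H^{k+1}(Y,\Q)
\]
still shows that $p^*$ is a monomorphism there.

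Finally, I apply the preservation statement to the map $p\colon Y\to X$, viewed as the map ``$f$'' of \S\ref{subsec:formal funct} with source $Y$ and target $X$: its cohomological behaviour just verified places it precisely in the hypothesis of that statement. The conclusion is the equivalence $Y$ is $k$-formal $\Leftrightarrow$ $X$ is $k$-formal, which is what we wanted. The only thing to be careful about is the bookkeeping of source versus target in the preservation statement; since the statement is symmetric (an ``if and only if''), there is no genuine obstacle, and no further computation is required.
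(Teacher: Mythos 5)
The paper does not give a proof of this lemma; it is stated as a cited result from \cite{DP11}, so there is no in-text argument to compare against. On its own terms your argument is correct and self-contained given the two facts already available in the appendix. The transfer identification $p^*\colon H^*(X,\Q)\xrightarrow{\;\cong\;}H^*(Y,\Q)^{\Gamma}$ is the same device used in the proof of Proposition \ref{prop:jumpcov}, and under the triviality hypothesis it indeed gives that $p^*$ is an isomorphism in all degrees $\le k$, while the factorization through the $\Gamma$-invariants shows $p^*$ is still injective in degree $k+1$ without any hypothesis there. This places $p\colon Y\to X$ squarely under the preservation principle recalled at the head of \S\ref{subsec:formal funct}, and that principle, being stated as an equivalence, gives both implications at once. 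Two small remarks. First, since the preservation principle already yields the equivalence, the separate appeal to Lemma~\ref{lem:kfc} at the start is logically redundant, though your observation that the easy direction requires no hypothesis on the monodromy action is correct and worth recording. Second, the phrase that the averaging operator ``is a one-sided inverse to $p^*$'' is slightly loose as written; the cleanest formulation is that the transfer $p_!$ satisfies $p_!\circ p^*=|\Gamma|\cdot\id$ and $p^*\circ p_!=\sum_{\gamma\in\Gamma}\gamma^*$, from which the identification $\im(p^*)=H^*(Y,\Q)^{\Gamma}$ follows. Neither remark affects the validity of the argument.
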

 
\subsection{The tangent cone formula}
\label{subsec:formal tc}
 
Let $G=\pi_1(X,x_0)$. The homomorphism $\C\to \C^{*}$, 
$z\mapsto e^z$ induces a homomorphism  
$\exp\colon \Hom(G,\C) \to \Hom(G, \C^{*})=\widehat{G}$.  
Given a subvariety of $W\subseteq \widehat{G}$, 
define its {\em exponential tangent cone}\/ at the origin to be the set  
\begin{equation}
\label{eq:tau1}
\tau_1(W)= \{ z\in \Hom(G, \C) \mid \exp(\lambda z)\in W,\ 
\text{for all $\lambda \in \C$} \}.
\end{equation}

It turns out that $\tau_1(W)$ is a finite union of rationally 
defined linear subspaces (see \cite{DPS-duke}, and 
also \cite{Su12} for more details).
Moreover, $\tau_1(W)$ is included in $\TC_1(W)$, the 
usual tangent cone to $W$ at the origin.

Now fix an integer $s>0$, and consider the varieties 
$\VV^1_s(X) \subseteq \widehat{G}$ and 
$\RR^1_s(X) \subseteq H^1(X,\C)=\Hom(G,\C)$. 
By work of Libgober \cite{Li02}, the tangent 
cone to $\VV^1_s(X) $ at $1$ is included in $\RR^1_s(X)$.
Thus, we get a chain of inclusions, 
\begin{equation}
\label{eq:tc inc}
\tau_1(\VV^1_s(X))\subseteq  \TC_1(\VV^1_s(X))\subseteq \RR^1_s(X), 
\end{equation}
each of which is a proper inclusion, in general.  
The main connection between the formality property 
of a space and its cohomology jump loci is provided by the 
following theorem.  
 
\begin{theorem}[\cite{DPS-duke}]
\label{thm:tcone}
Let $X$ be a $1$-formal space.  For each $s>0$,
the following ``tangent cone formula" holds:
\begin{equation}
\label{eq:tc}
\tau_1(\VV^1_s(X))=\TC_1(\VV^1_s(X))=\RR^1_s(X).
\end{equation}
\end{theorem}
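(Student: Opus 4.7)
Proof plan for Theorem \ref{thm:tcone}.

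The two chains of inclusions $\tau_1(\VV^1_s(X))\subseteq \TC_1(\VV^1_s(X))\subseteq \RR^1_s(X)$ recorded in \eqref{eq:tc inc} hold for any space of finite type, so the entire theorem reduces to establishing the reverse inclusion $\RR^1_s(X)\subseteq \tau_1(\VV^1_s(X))$. The strategy is to show that, under the $1$-formality hypothesis, the exponential map $\exp\colon H^1(X,\C)\to H^1(X,\C^*)$ carries $\RR^1_s(X)$ into $\VV^1_s(X)$; once this is known, the homogeneity of $\RR^1_s(X)$ together with Zariski-closedness of $\VV^1_s(X)$ immediately yields the desired inclusion.

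To set this up, first I would replace $X$ by its Sullivan $1$-minimal model. Since $X$ is $1$-formal, this model is quasi-isomorphic through degree $1$ (and injective through degree $2$) to the truncation of $(H^*(X,\Q),0)$ in degrees $\le 2$. Consequently, the deformation theory of the trivial rank-$1$ local system on $X$ is controlled, as a formal germ, by the quadratic differential graded algebra $(H^{\le 2}(X,\C),0)$ equipped with cup product. This is the key place where $1$-formality enters: without it, higher Massey products would deform the picture.

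Next I would identify, for any $\omega\in H^1(X,\C)$ sufficiently close to $0$, the twisted cohomology $H^*(X,\C_{\exp(\omega)})$ with the cohomology of the Aomoto-type complex $(H^*(X,\C),\cdot\omega)$ from \eqref{eq:aomoto}. This is the content of the Goldman--Millson/Deligne principle applied to the dgla controlling the deformation: for $1$-formal spaces, the analytic germ of the representation variety at the trivial representation is isomorphic, via $\exp$, to the germ at $0$ of its quadratic model, and this isomorphism respects the jumping stratifications. Thus one obtains a local analytic isomorphism of germs $(\VV^1_s(X),1)\cong (\RR^1_s(X),0)$, which in particular shows that $\exp$ maps a small analytic neighborhood of $0$ in $\RR^1_s(X)$ into $\VV^1_s(X)$.

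Finally, to pass from this local statement to the global one required by $\tau_1$, fix $v\in \RR^1_s(X)$. The variety $\RR^1_s(X)$ is homogeneous, so $\lambda v\in \RR^1_s(X)$ for every $\lambda\in\C$; for $\abs{\lambda}$ small enough, the germ isomorphism gives $\exp(\lambda v)\in \VV^1_s(X)$. The analytic arc $\lambda\mapsto \exp(\lambda v)$ therefore lies in the Zariski-closed set $\VV^1_s(X)$ on a neighborhood of $0$, hence on all of $\C$, proving $v\in \tau_1(\VV^1_s(X))$. Combined with \eqref{eq:tc inc}, this forces equality throughout, yielding \eqref{eq:tc} and, as a byproduct, the fact that $\RR^1_s(X)$ is a finite union of rationally defined linear subspaces (inherited from $\tau_1$ of an algebraic subvariety of $\widehat{G}$).

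The principal obstacle is the germ isomorphism in the second step: one must show that the dgla morphism supplied by $1$-formality not only induces an isomorphism on $H^{\le 1}$ and a monomorphism on $H^2$, but also produces a genuine analytic isomorphism of the associated deformation functors together with their cohomology jump stratifications. This is where one needs Goldman--Millson-style rigidity for quadratic quasi-isomorphic dglas, and it is also the step that fails in positive characteristic or in the presence of nontrivial triple Massey products, explaining why the theorem requires $1$-formality as a genuine hypothesis.
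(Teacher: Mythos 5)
The paper states Theorem \ref{thm:tcone} as a quoted result, citing \cite{DPS-duke} without proof, so there is no in-paper argument to compare against. Your sketch correctly reconstructs the strategy of the cited reference: the inclusions in \eqref{eq:tc inc} are general, so the nontrivial content is $\RR^1_s(X) \subseteq \tau_1(\VV^1_s(X))$, which you reduce --- using homogeneity of $\RR^1_s$, Zariski-closedness of $\VV^1_s$, and the one-dimensionality of the analytic arc $\lambda \mapsto \exp(\lambda v)$ --- to a germ-level statement, namely that $\exp$ gives an analytic isomorphism $(\RR^1_s(X),0) \to (\VV^1_s(X),1)$. That germ isomorphism is the main theorem of \cite{DPS-duke}, and you are right to single it out as the crux. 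The one difference of route lies in how the germ isomorphism is obtained: \cite{DPS-duke} works via the Malcev Lie algebra and the $1$-minimal model (under $1$-formality, the Malcev Lie algebra is the completed holonomy Lie algebra, and the argument compares so-called universal Aomoto complexes, tracking the jump-loci stratification throughout), whereas you invoke the Goldman--Millson/Deligne principle that a dgla controls the local deformation theory --- this is the formalization later carried out in full by Budur and Wang, and it yields the same conclusion. Your closing caveat is the correct one to emphasize: the dgla quasi-isomorphism supplied by $1$-formality must be shown to intertwine not merely the deformation functors of the two dglas but their cohomology jump stratifications; Goldman--Millson alone gives the germ of the character variety at $1$ but says nothing, without this refinement, about where dimensions of twisted cohomology jump. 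With that technical point granted, the local-to-global passage and the conclusion that $\RR^1_s(X)$ is a finite union of rationally defined linear subspaces both follow exactly as you say.
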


As a consequence, the irreducible components of 
$\RR^1_s(X)$ are all rationally defined subspaces, while 
the components of $\VV^1_s(X)$ passing through 
the origin are all algebraic subtori of the form $\exp(L)$, with $L$ 
running through the irreducible components of $\RR^1_s(X)$.

\newcommand{\arxiv}[1]
{\texttt{\href{http://arxiv.org/abs/#1}{arXiv:#1}}}

\newcommand{\arx}[1]
{\texttt{\href{http://arxiv.org/abs/#1}{arXiv:}}
\texttt{\href{http://arxiv.org/abs/#1}{#1}}}

\newcommand{\doi}[1]
{\texttt{\href{http://dx.doi.org/#1}{doi:#1}}}

\newcommand{\MR}[1]
{\href{http://www.ams.org/mathscinet-getitem?mr=#1}{MR#1}}

\newcommand{\MRh}[2]
{\href{http://www.ams.org/mathscinet-getitem?mr=#1}{MR#1 (#2)}}

\end{document}